\pgfplotsset{compat=newest}
\pgfplotsset{
    myplotstyle/.style={
        width=\linewidth,
        height=5cm,
        xlabel={$\sigma$},
        ylabel={$\Delta(p, \sigma)$},
        grid=major,
        grid style={dotted, gray!50},
        tick label style={font=\footnotesize},
        label style={font=\small},
        title style={font=\small\bfseries}
    }
}
\def \le {\leqslant}
\def \ge {\geqslant}
\title{Dirichlet improvability in $L_p$-norms}
\begin{document}


 
\author[Nikolay Moshchevitin]{Nikolay Moshchevitin}
\address{Nikolay Moshchevitin,  Institute für diskrete Mathematik und Geometrie,  Technische Universität Wien, Wien 1040, Austria. }
\email{nikolai.moshchevitin@tuwien.ac.at, moshchevitin@gmail.com}

\author[Nikita Shulga]{Nikita Shulga}
\address{Nikita Shulga, Sydney Mathematical Research Institute, The University of Sydney, NSW, Australia and  Department of Mathematical and Physical Sciences,  La Trobe University, Bendigo, Australia. }
\email{nikita.shulga@sydney.edu.au}

\maketitle

\begin{abstract}
  For a norm $F$ on $\R^2$, we consider the set of $F$-Dirichlet improvable numbers $\mathbf{DI}_F$. In the most important case of $F$ being an $L_p$-norm with $p=\infty$, which is a supremum norm, it is well-known that $\mathbf{DI}_F = \mathbf{BA}\cup \Q$, where $\mathbf{BA}$ is the set of badly approximable numbers. It is also known that $\mathbf{BA}$ and each $\mathbf{DI}_F$ are of measure zero and of full Hausdorff dimension.

  Using the classification of critical lattices for unit balls in $L_p$, we provide a complete and effective characterization of $\mathbf{DI}_p:=\mathbf{DI}_{F^{[p]}}$ in terms of the occurrence of patterns in regular continued fraction expansions, where $F^{[p]}$ is an $L_p$-norm with $p\in[1,\infty)$. This yields several corollaries. In particular, we resolve two open questions by Kleinbock and Rao by showing that the set $\mathbf{DI}_{p}\setminus \mathbf{BA}$ is of full Hausdorff dimension, and prove several results about the size of the difference $\mathbf{DI}_{p_1}\setminus \mathbf{DI}_{p_2}$. To be precise, we show that both $\mathbf{DI}_{2}\setminus \mathbf{DI}_{1}$ and $\mathbf{DI}_{1}\setminus \mathbf{DI}_{2}$ have full Hausdorff dimension.
  We also find all values of $p$, for which the set $\mathbf{DI}_p^c\cap\mathbf{BA}$ has full Hausdorff dimension.

  Finally, our characterization result implies that the number $e$ satisfies $e\in \mathbf{DI}_p$ if and only if $p\in(1,2)\cup(p_0,\infty)$ for some special constant $p_0\approx2.57$.

\end{abstract}

 \section{Introduction}\label{sec:intro}


\subsection{Classical background} \label{bck}
 The simplest result in the theory of Diophantine approximation is Dirichlet's Approximation Theorem, which can be formulated as follows.

   {\bf Theorem A.}
   {\it Let $\alpha \in \mathbb{R}$. For any real $ t \ge 1$ there exists  positive integer $q$ such that 
   $$
   \begin{cases}
   ||q\alpha || <\frac{1}{t},
   \cr
   1\le q\le t.
   \end{cases}
   $$
   }
Here $||\cdot||$ is the distance to the nearest integer. It is easy to see that one cannot uniformly improve the constant $1$ on the right-hand side of the first inequality. 
However, for some particular numbers, Theorem A can be improved. A real number $\alpha$ is  called {\it Dirichlet improvable} if there exists a constant $c<1$, such that the system
$$
\begin{cases}
   ||q\alpha || <\frac{c}{t},
   \cr
   1\le q\le t.
   \end{cases}
   $$
can be solved  in $q\in\N$ for any large real number $t$. 
We denote the set of all real  Dirichlet improvable numbers by $\mathbf{DI}_\infty$.

Recall that 
$\alpha$ is called badly approximable if there exists $c>0$ for which  the inequality 
$$\left|\alpha-\frac{p}{q}\right| >\frac{c}{q^2}$$
holds for all rational $\frac{p}{q}$. We denote the set of all badly approximable real numbers by $\mathbf{BA}$.
Classical results state that both $\mathbf{DI}_\infty$ and $\mathbf{BA}$ have zero Lebesgue measure and are of full Hausdorff dimension.

Davenport and Schmidt in \cite{MR0272722} showed that these two sets are connected. One of their results is the following statement.

    {\bf Theorem B.} {\it
An irrational number $\alpha$ satisfies $\alpha\in\mathbf{DI}_\infty$ if and only if it is badly approximable.}

It is well-known that bad approximability is equivalent to the property of having bounded partial quotients in a regular continued fraction expansion, therefore, the last theorem can be restated as follows.

  {\bf Theorem B$'$}. {\it
An irrational number $\alpha=[a_0;a_1,\ldots,a_n,\ldots]$ is Dirichlet non-improvable if and only if there is a subsequence $\{n_k\}_{k\in\N}$, such that $a_{n_k}\to\infty$ as $k\to\infty$.}
 
\subsection{Dirichlet improvability  with respect to a norm}

Here we describe a setting formulated by Kleinbock and Rao  \cite{KR}
 which goes back to the paper \cite{AD} by Andersen and Duke. 
 Our exposition is a bit different from the one in \cite{AD,KR}, however, our objects are equivalent to the ones from these papers.
 
 We consider a norm $F(x,y)$ in $\mathbb{R}^2 (x,y)$ such that 
  $$  
  F(x,y) = F(|x|,|y|)\,\,\,\,
  \text{and}\,\,\,\,  F(1,0)= F(0,1) = 1.
  $$
  In \cite{AD} such a norm is called {\it strongly symmetric}.
Denote the unit disk with respect to the norm $F$ as
    $$ 
    \mathcal{B}_F  = \{ (x,y)\in \mathbb{R}^2: F(x,y) \le1\}.
    $$

         Dani's correspondence \cite{MR794799} allows to reformulate many diophantine statements in terms of trajectories in the space of two-dimensional lattices, and we use this principle below. For a fixed $\alpha \in \mathbb{R}$, we consider the family of lattices
  \begin{equation}\label{lata}
  \Lambda_\alpha (t) = G_t A_\alpha\mathbb{Z}^2,\,\,\,
  \text{where}\,\,\,
  G_t =\left(
  \begin{array}{cc}
  t^{-1} &0\cr
  0&t
  \end{array}
  \right),
  \,\,\,
  A_\alpha =
  \left(
   \begin{array}{cc}
  1 &0\cr
  -\alpha&1
  \end{array}
  \right)
 \end{equation}
 and successive minima
 \begin{equation}\label{manos}
 \lambda_i (t) =
 \lambda_i (\Lambda_\alpha (t), \mathcal{B}_F),\,\,\, i=1,2
 \end{equation}
 of the  lattice $\Lambda_\alpha (t)$ with respect to the convex body $\mathcal{B}_F$ as functions in $t$. 
 For every $t$, the fundamental volume of the lattice $\Lambda_\alpha (t)$  is equal to one.

  For the unit disk $\mathcal{B}_F$ we consider its {\it critical determinant}
  $$
  \Delta_F = \inf \{ {\rm det} \,\Lambda: \,\text{there are no non-zero points of $\Lambda$ inside }\, \mathcal{B}_F\}.
  $$
  By compactness, this infimum is attained on some lattice which is called a {\it critical lattice}.  
  The set of all critical lattices 
 for $\mathcal{B}_F$ forms a {\it critical locus} which is denoted by $ \frak{L}_F$.


 Consider the value defined by 
 \begin{equation}\label{Drlo}
  d_F  (\alpha) = \limsup_{t\to \infty}  \lambda_1 (t) =  
  \limsup_{t\to \infty} 
   \lambda_1(\Lambda_\alpha (t), \mathcal{B}_F)
 .
 \end{equation}
  We call $d_F(\alpha)$ a {\it Dirichlet constant of $\alpha$ for the norm} $F$. 
  In \cite{AD}, the authors dealt with the analogous values $\delta_F(\alpha)$.
  In our notation, we have
  $ \delta_F(\alpha) =\Delta_F  \cdot (   d_F  (\alpha))^2$,
  where $\Delta_F  $ is the {\it critical determinant} of $\mathcal{B}_F$.
  It is clear that  $ \delta_F(\alpha)\le 1$ (or $ d_F(\alpha)\le \frac{1}{\sqrt{\Delta_F}}$) for all $\alpha$. 

 A real number $\alpha$ is called {\it $F$-Dirichlet improvable} if $d_F(\alpha) < \frac{1}{\sqrt{\Delta_F}}$ or, equivalently, $\delta_F(\alpha) <1$.   Conversely, in the case 
  $d_F(\alpha) =\frac{1}{\sqrt{\Delta_F}}$   (or $\delta_F(\alpha) =1$) we say that $\alpha$ is {\it $F$-Dirichlet non-improvable}.

For a fixed norm $F$, we denote the set of $F$-Dirichlet improvable numbers by $\mathbf{DI}_F$.  For the $L_\infty$-norm, this definition corresponds to regular Dirichlet improvability discussed in Subsection \ref{bck} above.

In the present paper, we thoroughly study the Dirichlet improvability in the most important case of $L_p$-norms
  \begin{equation}\label{LP}
  F^{[p]}(x,y) =  (|x|^p+|y|^p)^{\frac{1}{p}}.
  \end{equation}
  Sup-norm, Euclidean norm, and taxicab norm
  $$
    F^{[\infty]}(x,y) =  \max (|x|, |y|),\,\,\, F^{[2]}(x,y) =  (|x|^2+|y|^2)^{\frac{1}{2}},\,\,\,  F^{[1]}(x,y) =  |x|+|y|
    $$
    are particular examples.
    
 For a fixed $p$ and norm $F^{[p]}$, we simply say that $\alpha$ is $p$-Dirichlet improvable (or is $p$-Dirichlet non-improvable) and denote the set of $p$-Dirichlet improvable numbers by $\mathbf{DI}_p$. The set of $p$-Dirichlet non-improvable numbers is denoted by $\mathbf{DI}_p^c$.  

In this paper, we tackle three problems. The main one is to deal with the following question.
 
{\bf Problem I.} {\it Can one classify Dirichlet (non-)improvable numbers for other norms similarly to Theorem B$'$? } 
  
It turns out that the answer to the latter is positive, and we provide a complete and effective classification result of $p$-Dirichlet improvable numbers, see Theorem \ref{thm:charachterization} below. This result deeply relies on the classification of critical lattices for unit balls $\mathcal{B}_p$ in $L_p$, completed by Glazunov, Golovanov, and Malyshev in \cite{maly}.

 Before formulating other problems, we recall an important metrical result that follows from Ergodic theory (see Theorem 1 from \cite{AD} by Andersen and Duke).

    {\bf Theorem C. }
    {\it  For every strongly symmetric norm $F$, almost all $\alpha$ in the sense of Lebesgue measure  are not $F$-Dirichlet improvable, that is, for almost all $\alpha$ we have the equality
    $
    d_F(\alpha) =      \frac{1}{\sqrt{\Delta_{F}}}
     $.}
               \vskip+0.3cm

                             One more crucial result was proven in \cite{KR}.
                             It has a general part that deals with {\it irreducible norms} and a corollary for the Euclidean norm.
 
      \vskip+0.3cm
 
    {\bf Theorem D.}  {\it If $F$ is an irreducible norm on $\mathbb{R}^2$ whose unit ball is not a parallelogram, then the set of all badly approximable $F$-Dirichlet non-improvable numbers $\mathbf{DI}^c_F\cap\mathbf{BA}$
    has full Hausdorff dimension. In particular,   the set of all badly approximable $2$-Dirichlet non-improvable numbers $\mathbf{DI}^c_2\cap\mathbf{BA}$
    has full Hausdorff dimension.}
       \vskip+0.3cm

       For the definition of {\it irreducible norm} see the original paper  \cite{mh} and the recent one  \cite{KKR}. We do not want to go into the details here.
    We should note that for $L_p$-norms, Theorem D gives a result only in the case $p=2$, because
      $L_p$-norms for $ p\neq 1,2$ are not irreducible (see Lemma 8 from \cite{mh}), while for $p=1$ the unit ball $\mathcal{B}_1$ is a parallelogram. 

         Also, in \cite{MR4410764} Kleinbock and Rao proved the following result.
         
 {\bf Theorem E.}
    {\it For each norm $F$ on $\R^2$, the set $\mathbf{DI}_F$ is of measure zero but winning in the sense of Schmidt. In particular, $\mathbf{DI}_F$ has full Hausdorff dimension.
}
  \vskip+0.3cm

As the set of badly approximable numbers $\mathbf{BA}$ is also winning, we get that for any norm $F$, the intersection $\mathbf{DI}_F \cap \mathbf{BA}$ is winning and therefore it has full Hausdorff dimension.

Moreover, in \cite{KR} Kleinbock and Rao proved that either 
$\mathbf{BA}\subset \mathbf{DI}_F$ or else $\mathbf{BA}\setminus \mathbf{DI}_F$ has full Hausdorff dimension.

This motivated them to formulate the following open question, which is stated in \cite{KR}. 


{\bf Problem II.} {\it Does the difference $\mathbf{DI}_2 \setminus \mathbf{BA}$ have full Hausdorff dimension?} 

In this paper, we resolve this question in a more general form for any $L_p$-norm with $p\in[1,\infty)$. Namely, we prove 

\begin{theorem}\label{theorem:diminusbad:dimension}
For any $p\in [1,\infty)$, the set $\mathbf{DI}_p \setminus \mathbf{BA}$ is of full Hausdorff dimension.
\end{theorem}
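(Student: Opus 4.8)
The plan is to produce, for every $p\in[1,\infty)$, a Cantor-type subset of $\mathbf{DI}_p\setminus\mathbf{BA}$ whose Hausdorff dimension can be made as close to $1$ as desired; since $\mathbf{DI}_p\setminus\mathbf{BA}\subseteq\mathbb{R}$ this gives full dimension. There are two competing requirements. To leave $\mathbf{BA}$, the expansion must have unbounded partial quotients; to stay inside $\mathbf{DI}_p$, it must avoid the ``Dirichlet non-improvable patterns'' furnished by the characterization Theorem~\ref{thm:charachterization}. The guiding example is the constant $e=[2;1,2,1,1,4,1,1,6,\dots]$, which has unbounded partial quotients yet lies in $\mathbf{DI}_p$ for a wide range of $p$: there the large entries $2k$ are always embedded in the block $\dots,1,2k,1,1,\dots$, and it is exactly this ``safe embedding'' phenomenon that we want to isolate and make robust.

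First I would read off from Theorem~\ref{thm:charachterization} the combinatorial shape of the condition ``$\alpha\in\mathbf{DI}_p^c$'': it should say that $\alpha=[a_0;a_1,a_2,\dots]\notin\mathbf{DI}_p$ if and only if the word $(a_n)$ contains infinitely many (essentially disjoint) occurrences of patterns from an explicit finite family $\mathcal{F}_p$ of ``bad blocks'', each being a window of bounded length that is ``activated'' only when one or more of its entries exceeds a $p$-dependent threshold. The structural point I expect to extract is that there is a fixed finite \emph{safe template} $T(M)$ --- a block of partial quotients of bounded length, containing a single prescribed large entry $M$ and otherwise made of small fixed padding entries (the role played by the two $1$'s around $2k$ in the expansion of $e$) --- with the property that inserting $T(M)$ into an otherwise pattern-free word creates no element of $\mathcal{F}_p$, the padding acting as a buffer that forbids a bad block from straddling the junction with the surrounding word. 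Granting this, I would build the Cantor set by interleaving: fix a large integer $N$ and let $\Omega_N$ be the set of finite words over $\{1,\dots,N\}$ avoiding all patterns of $\mathcal{F}_p$; choose rapidly increasing block lengths $L_k$ and thresholds $M_k\to\infty$, and consider all $\alpha$ whose expansion alternates between free stretches of length $L_k$ with letters running through $\Omega_N$ and forced blocks $T(M_k)$. Every such $\alpha$ has $\limsup_n a_n\ge\limsup_k M_k=\infty$, hence $\alpha\notin\mathbf{BA}$, and every such $\alpha$ avoids all but finitely many bad blocks --- inside free stretches by the definition of $\Omega_N$, near forced blocks by the safe-template property --- hence $\alpha\in\mathbf{DI}_p$ by Theorem~\ref{thm:charachterization}.

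It remains to estimate the dimension. By the standard theory of continued-fraction Cantor sets (pressure equation / mass distribution principle), the Hausdorff dimension $s_N$ of the set of irrationals whose partial quotients form an infinite word in $\Omega_N$ satisfies $s_N\to1$ as $N\to\infty$, since deleting a fixed finite list of forbidden subwords from an alphabet of size $N$ costs only $O(1/N)$ in entropy; and because the forced blocks $T(M_k)$ have bounded length while $L_k\to\infty$, their density tends to $0$, so a routine computation with the natural measure on the interleaved set shows it still has dimension $s_N$. Letting $N\to\infty$ then gives $\dim_H(\mathbf{DI}_p\setminus\mathbf{BA})\ge\sup_N s_N=1$. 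The main obstacle is the very first step: one must verify, from the explicit critical-lattice classification underlying Theorem~\ref{thm:charachterization}, that a safe template genuinely exists for \emph{every} $p\in[1,\infty)$ --- i.e. that one can always sneak in an arbitrarily large partial quotient without triggering a non-improvability pattern --- and that the bad blocks are confined enough (bounded windows, activated only by large entries) that the template's padding truly decouples the forced blocks from the free stretches. Once this is secured, the dimension bookkeeping --- controlling $s_N$ as $N\to\infty$ and checking that sparsely interleaving bounded forced blocks does not lower the dimension --- is standard.
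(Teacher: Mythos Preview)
Your overall architecture --- take a full-dimension set of pattern-avoiding continued fractions, then sparsely insert unbounded digits to force $\alpha\notin\mathbf{BA}$ while keeping $\alpha\in\mathbf{DI}_p$ --- is the same shape as the paper's proof. But the execution you propose rests on a structural assumption about Theorem~\ref{thm:charachterization} that is false, and this is exactly the ``main obstacle'' you flag at the end.

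You expect $\mathcal{F}_p$ to be a \emph{finite} family of \emph{bounded-length} windows that are ``activated only when one or more of its entries exceeds a $p$-dependent threshold.'' Read the actual classification: this description fits only case~({\bf a}) (and the rational subcase ({\bf b1})). In case~({\bf b2}) the forbidden objects are the palindromes $s_\nu,\ldots,s_1,1,s_1,\ldots,s_\nu$ for \emph{arbitrarily large} $\nu$, with all $s_i$ fixed by $\sigma_p$ and no large-entry trigger whatsoever. In cases~({\bf c}) and~({\bf d}) the situation is worse: the forbidden long patterns range over an \emph{uncountable} family (all sequences $(b_i)$ in~({\bf c}); all pairs $\beta,\beta^*$ with $\beta\beta^*=3$ in~({\bf d})), again with no large entry required. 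Two of your three ingredients therefore fail. First, no fixed-length padding in a template $T(M)$ can decouple the insertion from the surrounding word when the forbidden patterns have unbounded length: a long (almost-)palindrome can straddle any finite buffer. Second, your dimension estimate for $\Omega_N$ via ``deleting a fixed finite list of forbidden subwords costs $O(1/N)$ in entropy'' does not apply, since the list is infinite.

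The paper repairs both points with two different ideas. For the base set, instead of building $\Omega_N$ from scratch it takes $B=\mathbf{DI}_p\cap\mathbf{BA}$, which already has full dimension by Theorem~E (winning), and works with $B_N=B\cap E_N$; this sidesteps any entropy computation. For the insertions, it drops the padding idea entirely and inserts a \emph{single} digit $A_i=i$ at position $n_i=2^i$. The safety mechanism is not a buffer but the \emph{strict monotonicity} of the inserted sequence: any (almost-)symmetric forbidden pattern that touches an inserted digit would, by symmetry, demand a second occurrence of that same value, which is impossible once $i>N$ since the original digits are $\le N$ and the inserted digits are pairwise distinct; and any forbidden pattern that avoids all inserted digits lies entirely in the original word, contradicting $B\subset\mathbf{DI}_p$. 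The finite patterns with large $x,y$ on both ends are handled separately by the fact that consecutive insertions are at distance $2^{i}-2^{i-1}\to\infty$. Finally, dimension preservation under the insertion map is not ``routine bookkeeping'' but a bespoke local H\"older lemma (Lemma~\ref{lemma:holder}): the map $P$ that deletes the inserted digits is locally $(1+\varepsilon)^{-1}$-H\"older on $S(B_N)$, and its inverse is locally Lipschitz, giving $\hdim S(B)=\hdim B$.
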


Note that for $p=\infty$, the statement of Theorem \ref{theorem:diminusbad:dimension} does not hold, as $\mathbf{DI}_\infty \setminus \mathbf{BA}=  \Q$.

In fact, Theorem \ref{theorem:diminusbad:dimension} is a consequence, among many other corollaries, of the aforementioned complete characterization of $p$-Dirichlet improvability of $\alpha$  in terms of regular continued fraction expansion (\ref{conti}) for any $p\in [1,\infty)$, that is Theorem \ref{thm:charachterization}.

 In \cite{KR}, the authors also formulated the following open problem about the sets of Dirichlet improvable numbers.
  
   {\bf Problem III.} {\it What can be said about the size of $\mathbf{DI}_{\nu_1}\setminus\mathbf{DI}_{\nu_2}$ for two different norms $\nu_1,\nu_2$? }

Our methods allow us to answer this question for two $L_p$-norms. In this introduction, we state the result for the two most interesting cases $\mathbf{DI}_{2}\setminus\mathbf{DI}_{1}$ and $\mathbf{DI}_{1}\setminus\mathbf{DI}_{2}$, for which we prove full dimension results. We use the notation $\hdim X $ for the Hausdorff dimension of a set $X$. 

\begin{theorem}\label{thm:di2minusd1:di1minusd2}
We have
$$
\hdim \mathbf{DI}_{2}\setminus\mathbf{DI}_{1}= \hdim \mathbf{DI}_{1}\setminus\mathbf{DI}_{2} =1.
$$
    
\end{theorem}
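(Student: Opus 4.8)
The plan is to deduce Theorem~\ref{thm:di2minusd1:di1minusd2} from the effective continued-fraction characterization of $\mathbf{DI}_p$ (Theorem~\ref{thm:charachterization}), which for each $p\in[1,\infty)$ describes membership in $\mathbf{DI}_p$ by the (non-)occurrence of certain patterns in the regular continued fraction expansion $\alpha=[a_0;a_1,a_2,\dots]$. Since $\mathbf{DI}_p$ is defined by a $\limsup$ over $t$ and the relevant critical-locus geometry is $\pm$-symmetric in the two coordinates, the characterization should take the form: $\alpha\in\mathbf{DI}_p$ iff for all sufficiently large $n$ the finite block $(a_n,a_{n+1},\dots,a_{n+m})$ avoids a prescribed finite family $\mathcal{F}_p$ of forbidden patterns (equivalently, $\alpha\in\mathbf{DI}_p^c$ iff some forbidden pattern recurs infinitely often). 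Concretely I expect $\mathcal{F}_1$ and $\mathcal{F}_2$ to differ: because the taxicab ball $\mathcal{B}_1$ is a parallelogram while $\mathcal{B}_2$ is the round disk, the two critical loci are genuinely different, so there is a finite word $w_{2\setminus1}$ that is forbidden for $p=1$ but allowed for $p=2$, and a finite word $w_{1\setminus2}$ forbidden for $p=2$ but allowed for $p=1$. Establishing the existence of these two separating words is the first—and I expect the main—step; it is really an unpacking of Theorem~\ref{thm:charachterization} together with the explicit classification of critical lattices for $\mathcal{B}_p$ from \cite{maly}, and it is where all the geometry of numbers is hidden.

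Granting the two separating words, the second step is a standard continued-fraction construction. To bound $\hdim(\mathbf{DI}_2\setminus\mathbf{DI}_1)$ from below, I would build a Cantor-type set of irrationals whose continued fraction expansions (i) contain the block $w_{1\setminus2}$ infinitely often, forcing $\alpha\notin\mathbf{DI}_1$, so $\alpha\in\mathbf{DI}_2\setminus\mathbf{DI}_1$ (using that $w_{1\setminus2}$ is allowed for $p=2$ — but we need more: we need the whole expansion to avoid all of $\mathcal{F}_2$ eventually, which is arranged by inserting the copies of $w_{1\setminus2}$ sparsely and filling the rest with, say, the digit $1$, provided $\mathcal{F}_2$ contains no all-$1$'s pattern, which one checks from the characterization), and (ii) have partial quotients restricted to a large finite alphabet $\{1,2,\dots,N\}$. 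For the restricted-alphabet continued fraction set, the Hausdorff dimension tends to $1$ as $N\to\infty$ by the classical Good/Jarník estimates, and inserting a single fixed block $w$ at a sparse (e.g. doubly exponential) sequence of positions does not lower the dimension — this is a routine pressure/Hölder-distortion argument, and I would cite it rather than redo it. Taking $N\to\infty$ gives $\hdim(\mathbf{DI}_2\setminus\mathbf{DI}_1)=1$; the reverse inequality $\le 1$ is trivial since we are inside $\R$. The set $\mathbf{DI}_1\setminus\mathbf{DI}_2$ is handled symmetrically using $w_{2\setminus1}$ in place of $w_{1\setminus2}$.

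One subtlety to flag: the sparse block-insertion must be compatible with \emph{both} avoiding the forbidden family of the "target" norm ($\mathcal{F}_2$ in the first case) \emph{and} realizing the forbidden family of the "excluded" norm ($\mathcal{F}_1$) infinitely often; this is why it matters that the separating word is a single finite word allowed eventually by the target characterization, and that the filler digits (all $1$'s, or a bounded alphabet) are themselves eventually $\mathcal{F}_2$-avoiding. If it happened that every large-digit-free word is forbidden for $p=2$ — i.e. $\mathbf{BA}\not\subset\mathbf{DI}_2$ in a strong sense — one would instead pick the filler inside whatever bounded pattern-avoiding alphabet the characterization permits; the existence of \emph{some} such infinite admissible bounded-alphabet word with positive-dimension freedom is guaranteed because $\mathbf{DI}_2\cap\mathbf{BA}$ is winning (hence full-dimensional) by Theorem~E, and the same freedom survives sparse insertions. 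So the only genuinely new input needed beyond Theorem~\ref{thm:charachterization} and classical continued-fraction dimension theory is the explicit comparison of $\mathcal{F}_1$ and $\mathcal{F}_2$, which I would carry out by direct inspection of the critical-loci descriptions; after that, the dimension statement follows by the outline above.
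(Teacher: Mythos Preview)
Your outline rests on a misreading of Theorem~\ref{thm:charachterization}: the characterization of $\mathbf{DI}_p^c$ is \emph{not} of the form ``some word from a fixed finite family $\mathcal{F}_p$ occurs infinitely often''. For $p=1$ (case~({\bf c})) and $p=2$ (case~({\bf d})) the forbidden objects are of two kinds: either a fixed central block flanked by partial quotients $x,\dots,y$ with $\min(x,y)\to\infty$, or (almost-)symmetric blocks of \emph{unbounded length}. Neither type is triggered by repeating one fixed finite word $w$ inside a bounded alphabet. In particular, there is no single finite $w_{1\setminus2}$ whose infinite recurrence in a bounded-alphabet expansion forces $\alpha\in\mathbf{DI}_1^c$: the finite-length patterns in case~({\bf c}) all require the outer digits $x,y$ to be unbounded, and the growing-$\nu$ patterns~\eqref{asym} are not a single word. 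So step one of your plan---extracting separating finite words from the two families---simply fails, and with it the whole bounded-alphabet Cantor construction.

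The paper's proof repairs exactly this point. It starts from $B=\mathbf{DI}_1\cap\mathbf{BA}$ (full dimension by Theorem~E) and, via the insertion map $S$ of Lemma~\ref{lemma:holder}, plants at positions $n_i=2^{i+100}$ blocks $A_i=(n_i,3,2,1,3,4,n_i+1)$ with \emph{growing} outer digits; this realises the finite pattern $x,3,2,1,3,4,y$ with $\min(x,y)\to\infty$ coming from $\beta^*=\tfrac43,\ \beta=\tfrac94$ in case~({\bf d}), so $S(B)\subset\mathbf{DI}_2^c$. Lemma~\ref{lemma:holder} (a H\"older/Lipschitz transfer, proved in Section~\ref{aadddd}) gives $\hdim S(B)=\hdim B=1$. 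The remaining work---and it is not routine---is to check that the insertion does not create any forbidden pattern of case~({\bf c}): the finite-length ones are excluded because the large digits sit at growing mutual distance, and the growing-$\nu$ ones are excluded because the inserted large digits form a strictly increasing sequence, contradicting the (almost-)symmetry of~\eqref{asym}. The direction $\mathbf{DI}_2\setminus\mathbf{DI}_1$ is handled the same way with $A_i=(n_i,1,1,1,2,n_i+1)$, and here ruling out the growing-$\nu$ patterns~\eqref{asym1} for $p=2$ genuinely uses the compactness argument (one side of the would-be pattern stabilises at a rational tail, the other at an irrational one, contradicting $\beta\beta^*=3$). None of this is visible if one models the characterization as a finite subshift constraint.
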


 Other results on this problem are listed in Theorem \ref{123} in Section \ref{sec:mainresults}.
 \vskip+0.3cm

\subsection{Structure of the paper} \label{str}
  Our paper is organized as follows. 
In Section \ref{axre}, we state basic facts and definitions concerning continued fractions, in particular, we recall different definitions of bad approximability
(end of Subsection \ref{sec:cf:and:bad}) and give interpretations of the values  $d^{[\infty]} (\alpha)$ and  $d^{[1]} (\alpha)$ (which are quantities \eqref{Drlo} for supremum norm and $L_1$ norm respectively)
in terms of irrationality measure function $\psi_\alpha(t)$ and  Minkowski's function $ \mu_\alpha (t)$ respectively
(Subsection \ref{sec:norms:lattices:dirichletconst}).
In Section \ref{sec:mainresults}, we formulate our main result - the classification 
Theorem \ref{thm:charachterization}, which gives a complete description of the sets  $\mathbf{DI}^c_p$ for all values of $p$ in terms of regular continued fractions of its elements.
Besides Theorems \ref{theorem:diminusbad:dimension} and \ref{thm:di2minusd1:di1minusd2} formulated above, our main result has many other corollaries, which are formulated in Section  \ref{sec:mainresults} as 
Corollary \ref{coroll:bad}, Theorem \ref{thm:bad:dirichlet:fulldim}, Theorem \ref{123} and Corollary \ref{coroll:setofparameters}.
We also formulate and prove non-metrical Corollary \ref{coroll:number:E} which deals with the number $e$.

The rest of the paper deals with the proofs of our results. In Section \ref{compaa} we recall the compactness argument related to the convergence in the space of lattices, which reduces the problem to consideration of critical lattices for the sets $\mathcal{B}_{p}$.
In Section \ref{sec:lpnorms:param}, based on the final solution of Minkowski conjecture \cite{maly}, we describe the complete structure of sets of critical lattices for $\mathcal{B}_{p}$ for all values of $p\ge 1$. 
In Section \ref{ffiii} we finalize the proof of Theorem \ref{thm:charachterization}.
In Section \ref{sec:basic:cf} we state all necessary facts concerning continued fractions, which we later use for the Hausdorff dimension bounds for the sets under consideration.
In Section \ref{sec:proof:bad:dirichlet:fulldim} we give a proof of Theorem \ref{thm:bad:dirichlet:fulldim} which is formulated in Section 
\ref{sec:mainresults}. In Section \ref{sec:di:minus:bad} we give proofs of
 Theorem \ref{theorem:diminusbad:dimension} and Theorem \ref{thm:di2minusd1:di1minusd2}.
In Section \ref{aadddd} we prove the main lemma, which allowed us to get the Hausdorff dimension in Theorems \ref{theorem:diminusbad:dimension} and \ref{thm:di2minusd1:di1minusd2}. 
 In Section \ref{sec:proof:setofparameters} we prove 
 Corollary \ref{coroll:setofparameters}, which is formulated at the end of Section \ref{sec:mainresults}. Finally, Appendix \ref{appendinx:1} is devoted to the independent verification of Minkowski's conjecture on critical lattices, with Appendix \ref{appendinx:2} containing pseudocode for some of the programs we used to verify this conjecture.

\subsection{Acknowledgements}

The authors thank the Number Theory group of La Trobe University, Bendigo for useful discussions. The authors thank the referee for many useful suggestions, in particular, proposing the creation of the Appendix with independent verification of the proof of Minkowski's conjecture. The authors are also grateful to the creators and contributors to IntervalArithmetic.jl Julia library for clarifying some specifics on the library directly to us. 

Nikolay Moshchevitin was supported by the Austrian Science Fund (FWF), Forschungsprojekt PAT1961524. Nikita Shulga was supported by a La Trobe University Graduate Research Scholarship and a La Trobe University Full Fee Research Scholarship.

   \vskip+0.3cm

 \section{Auxiliary remarks}\label{axre}

 In this section, we collect together some basic facts related to the application of continued fractions theory to Diophantine Approximation and discuss the relations with Dirichlet improvability.

 \subsection{Continued fractions and bad approximability}\label{sec:cf:and:bad}
  \vskip+0.3cm
  
 Let $\alpha$ be a real irrational number. We consider its continued fraction expansion
\begin{equation}\label{conti}
 \alpha = [a_0; a_1,a_2,\ldots, a_\nu, a_{\nu+1},\ldots],\,\,\,\,\,
 a_0 \in \mathbb{Z}, 
 \,\,
 a_j \in \mathbb{Z}_+, \, j =1,2,3,\ldots
 \end{equation}
 as well as the irrationality measure function
 \begin{equation}\label{irrational}
 \psi_\alpha (t ) = \min_{q\in \mathbb{Z}_+: \, q \le t} ||q\alpha ||,\,\,\,\,
 \text{where}\,\,\,\,
 ||x || = \min_{a\in \mathbb{Z}} |x - a|.
 \end{equation}
 Let us consider $\nu$-th convergent to $\alpha$ defined as
 $$
 \frac{p_\nu}{q_\nu} = [a_0; a_1,\ldots, a_\nu]
 $$
 and the values
 \begin{equation}\label{fromu}
   \xi_\nu = ||q_\nu\alpha|| = |p_\nu - q_\nu \alpha|,\,\,\,\,
  \alpha_\nu  = [ a_\nu; a_{\nu+1}, a_{\nu+2},\ldots] =\frac{\xi_{\nu-2}}{\xi_{\nu-1}},
  \,\,\,\,
  \alpha_\nu^* = \frac{q_{\nu-1}}{q_\nu} =  [0;  a_\nu , a_{\nu-1},\ldots, a_2, a_1].
  \end{equation}
    In this paper, we  also use the notation
 \begin{equation}\label{ze}
  \frak{z}_\nu = (q_\nu, p_\nu - q_\nu \alpha) \in \mathbb{R}^2 .
 \end{equation}
   In particular,  
   for the points defined in (\ref{ze}) and for the lattice defined in (\ref{lata}) 
   we have $\frak{z}_\nu \in \Lambda_\alpha (1)$.
 
For the values of the irrationality measure function, we have a nice equality
 \begin{equation}\label{psi}
  \psi_{\alpha}(t) = \xi_\nu   = \frac{1} {q_\nu (\alpha_{
  \nu+1} + \alpha_\nu^*)}
  =  \frac{1} {q_{\nu+1} ( 1+ \alpha_{
  \nu+2}^{-1}  \alpha_{\nu+1}^*)}
  \,\,\,
  \text{for}\,\,\, q_\nu \le t <q_{\nu+1},
  \end{equation}
  which is sometimes called Perron's formula, 
  and the upper bound
  $$
   \psi_{\alpha}(t)  < \frac{1}{t} \,\,\,\,\, \forall\, t \ge 1.
   $$
   The last inequality is equivalent to Dirichlet's theorem (Theorem \textbf{A}).

       \vskip+0.3cm

  For our purposes, the Lagrange constant
  $$
  \lambda(\alpha) = \liminf_{t\to\infty} t \psi_\alpha (t) =
   \liminf_{\nu\to\infty} q_\nu \xi_\nu=  \liminf_{\nu\to\infty} 
    \frac{1} {\alpha_{
  \nu+1} + \alpha_\nu^*}
  $$
  and  the Dirichlet constant
  \begin{equation}\label{diri}
  d(\alpha) = \limsup_{t\to\infty} t \psi_\alpha (t) =
   \limsup_{\nu\to\infty} q_{\nu+1} \xi_\nu =  \limsup_{\nu\to\infty} 
    \frac{1} { 1+\alpha_{
  \nu+1}^{-1} \cdot \alpha_\nu^*}
  \end{equation}
  are of major importance.
  The theorem of Hurwitz states that for all $\alpha$, one has
  $$
  0\le 
  \lambda(\alpha ) \le \frac{1}{\sqrt{5}},
  $$
  while Szekeres' theorem \cite{szekeres} claims that for all  irrational $\alpha$ one has
    $$
  \frac{1}{2} + \frac{1}{2\sqrt{5} } \le
  d(\alpha ) \le 1.
  $$
 There is an extended theory which deals with the study of the sets of possible values of $\lambda(\alpha)$ and $d (\alpha)$
 which are called  the {\it Lagrange spectrum} and the {\it Dirichlet spectrum} respectively.
 
   \vskip+0.3cm

  It is well-known that the following five  conditions for irrational $\alpha$ are equivalent:
    \vskip+0.3cm
    {\bf A.} $\alpha \in \mathbf{BA}$;
    
        \vskip+0.1cm
    
        {\bf B.}   $\inf_{q\in \mathbb{Z}_+} q ||q\alpha|| >0$;
        
             \vskip+0.1cm
        
             {\bf C.}   $\sup_{\nu \in \mathbb{Z}_+} a_\nu  <\infty$;
                  \vskip+0.1cm
             
                 {\bf D.}   $\lambda (\alpha) >0$;
                 
                      \vskip+0.1cm
                 
                    {\bf E.}   $d (\alpha) <1$.

        \vskip+0.3cm    
        In particular, 
  the equivalence {\bf A} \,$\Longleftrightarrow$\, {\bf E} is a reformulation of Theorem B, whereas the equivalence {\bf C} \,$\Longleftrightarrow$\, {\bf E} is practically a Theorem B$'$.

  Here we should note that Theorem B and its multidimensional generalization were proven by Davenport and Schmidt in \cite{MR0272722}.

  Beresnevich et al. in \cite{MR4395950} deepened the result of Davenport and Schmidt, by showing that the set
$$
\mathbf{FS}(n):=\mathbf{DI}(n) \setminus \Bigl(\mathbf{BA}(n) \cup \mathbf{Sing}(n) \Bigr)
$$
for $n\ge2$ has continuum many points. Here $\mathbf{DI}(n)$, $\mathbf{BA}(n)$ and $\mathbf{Sing}(n)$ are $n$-dimensional analogues (in a sense of Diophantine properties) of $\mathbf{DI}_\infty$, $\mathbf{BA}$ and $\Q$ respectively. 
As it was mentioned in \cite{MR4395950}, in the case $n=2$ this result also follows from a complete description of the two-dimensional Dirichlet spectrum
discovered by Akhunzhanov and Shatskov \cite{MR3284117}, see also \cite{MR4449836}.
Our Theorem \ref{theorem:diminusbad:dimension} shows that if we stay in dimension one, but change the norm from supremum norm to other $L_p$-norms, the corresponding set also has continuum many points. Moreover, it has full Hausdorff dimension. The same full dimension result is conjectured for $\mathbf{FS}(n)$ with $n\ge2$ in  \cite{MR4395950}.

       \vskip+0.3cm
      All the facts mentioned above can be found in the classical books 
      \cite{Cas,Cus,RockettSus,Sch}.  
      In particular, a lot is known about the Lagrange spectrum  
(see \cite{Cus,MR3815461}). As for the Dirichlet spectrum, one may 
consider  papers
  \cite{I1,I2,I3} and references therein.

  \subsection{Relations with different norms}\label{sec:norms:lattices:dirichletconst}
  
 For simplicity of notation, in the case of $L_p$-norms we use the convention and write $\mathcal{B}_{p}=\mathcal{B}_{F^{[p]}}$, $ \frak{L}_p= \frak{L}_{F^{[p]}}$. Also, for the $L_p$-norm $F^{[p]}$  we use the notation 
 $$ d^{[p]}(\alpha)
 = \limsup_{t\to \infty}   \lambda_1(t) .
 $$
 
 First, we note that for minima $\lambda_i(t)$ defined in \eqref{manos}, we get that the dilated body
 $
 \lambda_1 (t) \cdot \mathcal {B}_F
 $
 for any $t$ has a non-trivial  point of $ \Lambda_\alpha (t) $ on its boundary, while the body
  $
 \lambda_2 (t)\cdot \mathcal {B}_F
 $
 contains a pair of linearly independent points of $ \Lambda_\alpha (t) $.

Also note that if $\alpha \in \mathbb{Q}$, then $\lambda_1 (t) \to 0$ when $ t\to \infty$.
Thus, $d_F  (\alpha)=0$ for any rational $ \alpha $ and for any norm $F$, so every rational $\alpha$ is trivially $F$-Dirichlet improvable for any $F$.

Now consider the following two examples.

When our norm is a sup-norm $F^{[\infty]}$, one can easily see that 
 $$
 (\lambda_1(t_n))^2 = q_{n+1} ||q_n\alpha|| = \lim_{t \to q_{n+1}-} t\psi_\alpha (t)     $$
  for the irrationality measure function defined in (\ref{irrational}).
  In other words, taking into account the second equality from (\ref{psi}), one has
  $$
 (\lambda_1(t_n))^2 =  D\left(\frac{1}{\alpha_{n+2}}, \alpha_{n+1}^*\right),\,\,\,\,
  \text{where}\,\,\,\,
  D(x,y) = \frac{1}{1+xy}.
  $$
  Therefore, the definition of Dirichlet constant (\ref{diri}) may be rewritten as
  $$
  d(\alpha)  =   ( \limsup_{t\to \infty} \lambda_1(t))^2  =    (d^{[\infty]} (\alpha))^2.
  $$
  The value  $d^{[2]} (\alpha)$ was studied in \cite{KR}.  The value $d^{[1]} (\alpha)$ is related to the so-called Minkowski spectrum $\mathbb{M}$ (see \cite{momi}), associated with Minkowski diagonal continued fraction. In particular, in the notation of \cite{momi}, one has
  $$
   \frak{m}  (\alpha ) =   \limsup_{t\to \infty} t\cdot \mu_\alpha (t) =
    \frac{ (d^{[1]} (\alpha))^2}{4} .
  $$
\vskip+0.3cm

Also, we should note 
  that the set of values of $t$ for which
 $$
  \lambda_1 (t) = \lambda_2 (t)
  $$
  forms an infinite  strictly increasing sequence $ \{t_n\}_{n\in\N}$. Moreover,
  \begin{equation}\label{Drl}
  d_F  (\alpha) = \limsup_{n\to \infty}   \lambda_1(t_n)  =  \limsup_{n\to \infty}   \lambda_2(t_n)
 .
 \end{equation}
   A classical geometric construction which goes up to Hermite \cite{her} and  Minkowski \cite{Mi,Mi1}
   describes an algorithm of constructing the sequence $\{t_n\}_n$. It is related to certain generalizations of regular continued fractions.
  This construction was explained in detail in a recent paper by Andersen and Duke \cite{AD}. 
  
        \vskip+0.3cm
    \section{Main results and their corollaries}\label{sec:mainresults}

  \vskip+0.3cm

We begin this section with the formulation of our main classificational result.
  The proof of this result is based on the classification of critical lattices for $\mathcal{B}_p$, completed by Glazunov, Golovanov, and Malyshev in \cite{maly}. Davis \cite{D} observed that the structure of the set of critical lattices $\frak{L}_{p}$ differs depending on whether $p$ is greater or smaller than $p_0$, where $p_0\approx2.57$ is some constant. The detailed description of critical lattices is given in Section \ref{sec:lpnorms:param}.
        \vskip+0.3cm

    \begin{theorem}\label{thm:charachterization}

   {\rm  ({\bf a})}
 Let $ 2< p < p_0$. Then $\alpha\in\mathbf{DI}_p^c$
 if and only if in the continued fraction for $\alpha$ there are patterns of the type
         \begin{equation}\label{trip0}
         x,1,1,y\,\,\,\,\,\text{or}\,\,\,\,\, x,2,y
        \end{equation}
         with $\min(x,y)\to \infty$.

       {\rm  ({\bf b})}
     Let $ p \in (1, 2)\cup (p_0, \infty)$. Consider $\varsigma_p$ defined in (\ref{lat3}).

{\rm ({\bf b1})}  If $\varsigma_p \in \mathbb{Q}$, 
consider its regular  finite continued fraction expansion
  $$
  \varsigma_p = [0;s_1,s_2,\ldots,s_k], s_k \ge 2.
  $$
Then  $\alpha\in\mathbf{DI}_p^c$ if and only if in its continued fraction expansion (\ref{conti}) there
    occur patterns of at least one of the following four forms:
$$
   x, s_k,s_{k-1}, \ldots,s_2, s_1,
    1,s_1,s_2,\ldots , s_{k-1},s_k ,y;
$$
$$
    x, 1,s_k-1,s_{k-1}, \ldots,s_2, s_1,
    1,s_1,s_2,\ldots , s_{k-1},s_k ,y;
$$
     $$ 
    x,s_k,s_{k-1}, \ldots,s_2, s_1,
    1,s_1,s_2,\ldots , s_{k-1},s_k-1 ,1,y;
    $$
    $$
    x, 1,s_k-1,s_{k-1}, \ldots,s_2, s_1,
    1,s_1,s_2,\ldots , s_{k-1},s_k-1,1 ,y
$$ 
    with $\min (x,y) \to \infty$.

  {\rm ({\bf b2})} If $\varsigma_p \not\in \mathbb{Q}$, consider its regular continued fraction expansion
  $$
  \varsigma_p = [0;s_1,s_2,\ldots,s_\nu,\ldots].
  $$
   Then $\alpha\in\mathbf{DI}_p^c$ if and only if in its continued fraction expansion (\ref{conti}) there
    occur palindromic  patterns  of the form
    \begin{equation}\label{pat}
    s_\nu,s_{\nu-1}, \ldots,s_2, s_1,
    1,s_1,s_2,\ldots , s_{\nu-1},s_\nu
    \end{equation} 
    with arbitrarily large values of $\nu$.

       {\rm  ({\bf c})}
     Number $\alpha\in\mathbf{DI}_1^c$ if and only if there exists a  sequence of positive integers $\{b_n\}_{n\in\N}$, such that
     either the continued fraction expansion of $\alpha$ contains almost symmetric patterns
     \begin{equation}\label{asym}
     b_\nu, b_{\nu-1},\ldots,b_2, b_1,1,1, b_1+1, b_2,\ldots, b_{\nu-1},b_\nu\,\,\,\,\,
     \text{or}\,\,\,\,\,
          b_\nu,b_{\nu-1}, \ldots,b_2, b_1+1,1,1, b_1, b_2,\ldots, b_{\nu-1},b_\nu
          \end{equation}
          with arbitrarily large values of $\nu$ 
          \\  or  a sequence of patterns of at least one of  the following eight forms:
$$
 x, b_\nu, b_{\nu-1},\ldots,b_2, b_1,1,1, b_1+1, b_2,\ldots, b_{\nu-1},b_\nu,y;
$$
$$
  x, b_\nu, b_{\nu-1},\ldots,b_2, b_1,1,1, b_1+1, b_2,\ldots, b_{\nu-1},b_\nu-1,1,y;
$$
     $$  x,1, b_\nu-1, b_{\nu-1},\ldots,b_2, b_1,1,1, b_1+1, b_2,\ldots, b_{\nu-1},b_\nu,y;
    $$
    $$
  x,1, b_\nu-1, b_{\nu-1},\ldots,b_2, b_1,1,1, b_1+1, b_2,\ldots, b_{\nu-1},b_\nu-1,1,y;
$$ 
$$
  x, b_\nu,b_{\nu-1}, \ldots,b_2, b_1+1,1,1, b_1, b_2,\ldots, b_{\nu-1},b_\nu,y;
   $$
   $$
  x,1, b_\nu-1 ,b_{\nu-1}, \ldots,b_2, b_1+1,1,1, b_1, b_2,\ldots, b_{\nu-1},b_\nu,y;
   $$
      $$
  x, b_\nu,b_{\nu-1}, \ldots,b_2, b_1+1,1,1, b_1, b_2,\ldots, b_{\nu-1},b_\nu-1,1,y;
   $$
      $$
  x,1, b_\nu-1 ,b_{\nu-1}, \ldots,b_2, b_1+1,1,1, b_1, b_2,\ldots, b_{\nu-1},b_\nu-1,1,y
   $$
          with fixed $\nu , b_1,\ldots,b_\nu$ and $ \min(x,y) \to \infty$, 
          \\ or patterns $x,2,y$  with $ \min(x,y) \to \infty$,   or patterns $x,1,1,y$  with $ \min(x,y) \to \infty$.
          
          {\rm  ({\bf d})}    Number $\alpha\in\mathbf{DI}_2^c$ if and only if either in continued fraction for $\alpha$ there are patterns of the type
         $$
         x,1,1,y\,\,\,\,\,\text{or}\,\,\,\,\, x,2,y
         $$
         with $\min(x,y)\to \infty$
          or
          \\
          there exist two  irrational numbers 
                 \begin{equation}\label{aabb}
          \beta^* = [b_0^*; b_1^*,b_2^*,\ldots,b_{\nu-1}^*, b_\nu^*,\ldots],\,\,\,\,
          \beta = [b_0; b_1,b_2,\ldots, b_{\nu-1}, b_\nu,\ldots],\,\,\,\ b_0^*,b_0\ge 0
          \end{equation}
          satisfying the equation
          \begin{equation}\label{rararar}
          \beta\cdot\beta^*=3,
          \end{equation}
           such that in the continued fraction expansion of $\alpha$, there exist patterns
      \begin{equation}\label{asym1}
               b^*_{\nu}, \ldots,b^*_1, b^*_0+1,1, b_0+1, b_1,\ldots,b_\nu\,\,\,\,\,
     \text{or}\,\,\,\,\,
          b_\nu,\ldots,b_1, b_0+1,1, b^*_0+1, b^*_1,\ldots, b^*_{\nu}
          \end{equation}
          with arbitrarily large values of $\nu$
 or
 \\
 there exist two rational numbers $\beta^*$ and $\beta$ satisfying (\ref{rararar}) and in  the continued fraction expansion of $\alpha$  
 there exists  a  sequence  of one of the eight patterns 
  with $\min(x,y)\to \infty$
  constructed from $\beta^*=[b_0^*;b_1^*,\ldots,b_k^*], \beta=[b_0;b_1,\ldots,b_\nu]$ similarly to those from statement  {\rm  ({\bf c})}\footnote{ If $\beta^*, \beta$ are both rational, then their continued fractions are not necessarily of the same length. One should interpret the pattern structure in the following way.
  
  For example, one of the eight patterns is $x,b^*_{k},\ldots,b^*_1, b^*_0+1,1, b_0+1, b_1,\ldots, b_\nu,y$ with $\min(x,y)\to\infty$ and $b^*_{k},b_\nu\ge2$ and the rest are constructed from this one in the same way as in the case ${\bf (c)}$ by changing last partial quotient $b_\nu$ to $b_\nu-1,1$ and so on. Note that for $\beta=1$ there's no partial quotient $\ge2$, but it has still two representations $\beta=[0;1]=[1].$}.
\end{theorem}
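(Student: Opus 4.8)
The plan is to pass, by means of the compactness machinery of Section~\ref{compaa}, from the analytic definition of $\mathbf{DI}_p^c$ to a geometric accumulation condition on the critical locus $\frak{L}_p$, and then to transcribe that condition into the combinatorics of finite windows in the continued fraction (\ref{conti}). By definition $\alpha\in\mathbf{DI}_p^c$ means $d^{[p]}(\alpha)=\frac{1}{\sqrt{\Delta_p}}$, and since $\lambda_1(\Lambda,\mathcal{B}_p)\le\frac{1}{\sqrt{\Delta_p}}$ for every unimodular lattice $\Lambda$, with equality precisely for rescaled critical lattices, the value $d^{[p]}(\alpha)$ equals $\frac{1}{\sqrt{\Delta_p}}$ if and only if there is a sequence $t_n\to\infty$ along which the unimodular lattices $\Lambda_\alpha(t_n)$ converge, in the space of lattices, to some $\Lambda^*\in\frak{L}_p$. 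By (\ref{Drl}) it is enough to test this along the balanced times, where $\lambda_1(t_n)=\lambda_2(t_n)$ and $\Lambda_\alpha(t_n)$ carries a pair of linearly independent vectors of equal $F^{[p]}$-length on the boundary of $\lambda_1(t_n)\mathcal{B}_p$; a stability estimate for $\lambda_1$ near $\frak{L}_p$ then shows that $\lambda_1(t_n)\to\frac{1}{\sqrt{\Delta_p}}$ forces $\Lambda_\alpha(t_n)\to\Lambda^*$, and conversely.

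The second step is to make these near-critical lattices explicit in terms of convergents. Writing $\Lambda_\alpha(t)=G_t A_\alpha\mathbb{Z}^2=G_t\Lambda_\alpha(1)$, every balanced time $t_n$ falls in the range governed by two consecutive convergent vectors $\frak{z}_\nu,\frak{z}_{\nu+1}$ of (\ref{ze}), and the first two minimal vectors of $\Lambda_\alpha(t_n)$ are small integer combinations of $\frak{z}_{\nu-1},\frak{z}_\nu,\frak{z}_{\nu+1}$. By (\ref{fromu})--(\ref{psi}) the similarity type of the pair $(\frak{z}_\nu,\frak{z}_{\nu+1})$, hence the shape of $\Lambda_\alpha(t_n)$ in a neighbourhood of $t_n$, depends only on the two truncations $\alpha_\nu^*=q_{\nu-1}/q_\nu=[0;a_\nu,a_{\nu-1},\dots,a_1]$ and $\alpha_{\nu+1}^{-1}=[0;a_{\nu+1},a_{\nu+2},\dots]$. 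Thus ``$\Lambda_\alpha(t_n)$ lies within $\varepsilon$ of $\frak{L}_p$'' becomes a condition on a finite window of partial quotients of $\alpha$ around index $\nu$ (the tail $\alpha_{\nu+1}^{-1}$ being needed only up to precision $\varepsilon$), and $\alpha\in\mathbf{DI}_p^c$ becomes the statement that such windows occur for arbitrarily large $\nu$. The flanking requirement $\min(x,y)\to\infty$ in the theorem is the exact transcription of the fact that at a balanced time the successive minima must be realised by the convergent vectors inside the window and not be disturbed by the neighbours $\frak{z}_{\nu-2},\frak{z}_{\nu+2}$ --- which is what makes the limit a genuine point of $\frak{L}_p$ rather than a point close to one of its degenerations.

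The remaining work is to feed in the explicit structure of $\frak{L}_p$ recalled in Section~\ref{sec:lpnorms:param} (after Glazunov--Golovanov--Malyshev \cite{maly} and Davis \cite{D}) and to read off the admissible windows case by case. For $2<p<p_0$ the locus $\frak{L}_p$ is a single lattice modulo the symmetries of $\mathcal{B}_p$, whose minimal configuration corresponds exactly to the two windows $x,1,1,y$ and $x,2,y$, giving (\ref{trip0}). For $p\in(1,2)\cup(p_0,\infty)$ a critical lattice carries one distinguished real parameter $\sigma_p$ as in (\ref{lat3}); since the critical configuration is a palindromic block built around a central $1$, expanding $\sigma_p$ as a continued fraction yields the palindromic pattern (\ref{pat}) when $\sigma_p$ is irrational, and its four finite avatars when $\sigma_p$ is rational --- the multiplicity four arising from the choice of which of $v_1\pm v_2$ touches $\partial\mathcal{B}_p$ together with the ambiguity $[\,\cdot\,,s_k]=[\,\cdot\,,s_k-1,1]$. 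For $p=2$ the locus $\frak{L}_2$ is the whole circle of hexagonal lattices: the inscribed-hexagon relation turns into $\beta\beta^*=3$ as in (\ref{rararar}), producing the patterns (\ref{asym1}) --- plus, from a distinguished hexagonal lattice, once more the degenerate windows $x,1,1,y$ or $x,2,y$ --- with the rational sub-case of $\beta,\beta^*$ handled as in (b1). For $p=1$, $\mathcal{B}_1$ is a parallelogram and $\frak{L}_1$ is correspondingly the largest locus, which is what forces the free auxiliary sequence $\{b_n\}$ and the long list of admissible windows (\ref{asym}) together with its eight flanked variants and the two degenerate windows. In each case the ``if'' direction builds a sequence $t_n$ from an occurrence of the pattern, while the ``only if'' direction reconstructs the pattern from a near-critical $\Lambda_\alpha(t_n)$ via the finiteness of the governing window; the dichotomy between arbitrarily long palindromic blocks (irrational modulus, approached only in the limit) and a fixed finite block flanked by partial quotients tending to infinity (rational modulus, hit exactly) mirrors the rational/irrational split of $\sigma_p$, respectively of $\beta,\beta^*$.

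I expect the main obstacle to be this last, combinatorial step: enumerating without omission or repetition the finitely many boundary configurations of a critical lattice of $\mathcal{B}_p$ --- which points of $\partial\mathcal{B}_p$ are occupied, and which minimal basis is chosen --- and matching each to precisely one of the listed continued-fraction windows, all while carrying the rational/irrational dichotomy of $\sigma_p$ (and of $\beta,\beta^*$ for $p=2$), where the finite-continued-fraction ambiguity $[\,\cdot\,,s_k]=[\,\cdot\,,s_k-1,1]$ multiplies the bookkeeping. A secondary technical point, needed to make both implications rigorous, is the quantitative stability estimate for $\lambda_1(\cdot,\mathcal{B}_p)$ near $\frak{L}_p$, uniform over the relevant range of the parameter, which guarantees that $\Lambda_\alpha(t_n)\to\frak{L}_p$ if and only if the governing data $(\alpha_\nu^*,\alpha_{\nu+1}^{-1})$ converge to the corresponding critical data.
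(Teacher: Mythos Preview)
Your plan is correct and follows essentially the same approach as the paper: reduce via Proposition~\ref{proposition:1} to convergence of $\Lambda_\alpha(\tau_k)$ to a point of the normalized critical locus, then use the explicit structure of $\frak{L}_p$ from Section~\ref{sec:lpnorms:param} together with the identification of the two nearby lattice points as consecutive convergent vectors (the paper's Lemma~\ref{lemma:1:points}, which is the precise form of your observation that the shape depends only on $\alpha_\nu^*$ and $\alpha_{\nu+1}^{-1}$) to read off the continued-fraction patterns case by case. You correctly anticipate that the boundary-configuration enumeration is the bulk of the work; the paper organises it via the ratios $\alpha(\Omega),\alpha^*(\Omega)$ of (\ref{alef}) and the sub-case split on whether $|y'|\lessgtr|y''|$, which is exactly where the two degenerate windows $x,1,1,y$ versus $x,2,y$ (and the rational/irrational dichotomy you flag) emerge.
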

 
    \vskip+0.3cm

  \begin{remark}\label{remark:p0} In the case $ p=p_0$  we have two non-congruent critical lattices for $ \mathcal{B}_p$ (see case {\bf 5.3} from Section \ref{sec:lpnorms:param}). Therefore,   $p_0$-Dirichlet non-improvability of $\alpha$ is equivalent to the existence in its continued fraction expansions 
    either patterns (\ref{trip0})  from the case ({\bf a}) or patterns from the case ({\bf b}).  However, it is not known whether $\varsigma_{p_0}$ is rational or not. Hence, now we cannot choose between subcases     ({\bf b1}),  ({\bf b2}).
 \end{remark}

         \vskip+0.3cm

  \begin{remark} In the case of an arbitrary strongly symmetric norm, the criterion for $F$-Dirichlet non-improvability can also be given in terms of patterns from the continued fraction expansion of $\alpha$.
    But in this general case critical patterns analogous to  \eqref{pat}, \eqref{asym}, \eqref{asym1}
    should be defined by subsets of the boundary of the singularization domain $S$ (see \cite{AD} and \cite{I}) which correspond to critical lattices and where the maximum of the corresponding determinant occurs. The related calculations in the general case are too complicated.
\end{remark}
 \vskip+0.3cm

   \begin{remark}
   An application of the celebrated Birkhoff's Ergodic Theorem yields that almost all reals (in the sense of Lebesgue measure) are continued fraction normal. In particular, this means that for almost all real numbers, all patterns will occur infinitely many times. Thus, Theorem \ref{thm:charachterization} together with the normality argument implies the Lebesgue measure result of Theorem C (Theorem 1 from \cite{AD}) for the case of $L_p$-norms.

   \end{remark}

From the characterization result of Theorem \ref{thm:charachterization} we can deduce several important corollaries. First, let us describe some properties of the set of values of $p$ for which there exist badly approximable numbers $\alpha$ which are simultaneously $p$-Dirichlet non-improvable.
 
The next result is a trivial consequence of Theorem \ref{thm:charachterization} ({\bf a}) and ({\bf b}), so we do not provide a proof.

   \begin{corollary}\label{coroll:bad}
           {\rm  ({\bf a})}
For $p \in  (2,p_0)$ the set of $\mathbf{DI}_p^c$ contains no badly approximable numbers. 
      \vskip+0.3cm
                 {\rm  ({\bf b})}
For $p\in (1,2)\cup (p_0,\infty)$ the set $\mathbf{DI}_p^c\cap\mathbf{BA}\neq\emptyset$ if and only if the number $\varsigma_p$ is badly approximable.

\end{corollary}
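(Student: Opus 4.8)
The plan is to read both parts off the pattern characterization of Theorem~\ref{thm:charachterization}, using only the classical equivalence that an irrational $\alpha$ lies in $\mathbf{BA}$ precisely when its partial quotients are bounded (conditions \textbf{A} and \textbf{C} of Subsection~\ref{sec:cf:and:bad}), together with the elementary fact that a rational number is never badly approximable. For part \textbf{(a)}, fix $p\in(2,p_0)$. By Theorem~\ref{thm:charachterization}\,\textbf{(a)}, any $\alpha\in\mathbf{DI}_p^c$ must carry, in its continued fraction, patterns $x,1,1,y$ or $x,2,y$ with $\min(x,y)\to\infty$; in particular $\alpha$ has partial quotients that are unbounded along a subsequence, so $\alpha\notin\mathbf{BA}$. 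Hence $\mathbf{DI}_p^c\cap\mathbf{BA}=\emptyset$, which is the assertion of \textbf{(a)}.

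For part \textbf{(b)}, fix $p\in(1,2)\cup(p_0,\infty)$ and split along the dichotomy for $\sigma_p$ used in Theorem~\ref{thm:charachterization}\,\textbf{(b)}. If $\sigma_p\in\mathbb{Q}$ we are in case \textbf{(b1)}, where each of the four admissible pattern families carries flanking entries $x$ and $y$ with $\min(x,y)\to\infty$; exactly as in \textbf{(a)} this forces unbounded partial quotients, so $\mathbf{DI}_p^c\cap\mathbf{BA}=\emptyset$, and since a rational $\sigma_p$ is not badly approximable the claimed equivalence holds here (both sides are false). If $\sigma_p\notin\mathbb{Q}$ we are in case \textbf{(b2)}, $\sigma_p=[0;s_1,s_2,\dots]$, and it remains to show that $\mathbf{DI}_p^c\cap\mathbf{BA}\neq\emptyset$ is equivalent to $\sup_\nu s_\nu<\infty$. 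Denote by $P_\nu$ the palindromic block (\ref{pat}) of length $2\nu+1$, and record the nesting $P_{\nu+1}=(s_{\nu+1},P_\nu,s_{\nu+1})$, so that $P_\nu$ is the central sub-block of $P_{\nu+1}$.

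If $M:=\sup_\nu s_\nu<\infty$, I would take $\alpha:=[0;P_1,P_2,P_3,\dots]$, the infinite concatenation of these blocks: all partial quotients of $\alpha$ lie in $\{1\}\cup\{s_1,s_2,\dots\}$ and hence do not exceed $\max(1,M)$, so $\alpha\in\mathbf{BA}$; and $P_\nu$ occurs in $\alpha$ for every $\nu$ (as the $\nu$-th concatenated block, and indeed infinitely often by the nesting), so Theorem~\ref{thm:charachterization}\,\textbf{(b2)} yields $\alpha\in\mathbf{DI}_p^c$, whence $\mathbf{DI}_p^c\cap\mathbf{BA}\neq\emptyset$. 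Conversely, if $\sup_\nu s_\nu=\infty$, suppose for contradiction that some $\alpha\in\mathbf{DI}_p^c\cap\mathbf{BA}$ has all partial quotients at most $M$; by Theorem~\ref{thm:charachterization}\,\textbf{(b2)} the block $P_\nu$ occurs in $\alpha$ for arbitrarily large $\nu$, so choosing $i$ with $s_i>M$ and then an occurring $\nu\ge i$ exhibits the entry $s_i$ of $P_\nu$ as a partial quotient of $\alpha$ exceeding $M$ --- a contradiction. This settles the equivalence, and with it part \textbf{(b)}. I do not foresee a real obstacle here: the argument only matches the partial-quotient description of $\mathbf{BA}$ against the shape of the critical patterns, and the single point needing a small idea --- producing the badly approximable witness in the $\Leftarrow$ direction of \textbf{(b2)} --- is made transparent by the palindromic nesting of the blocks $P_\nu$; this is why the corollary is a trivial consequence of Theorem~\ref{thm:charachterization}.
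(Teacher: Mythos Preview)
Your proof is correct and is precisely the argument the paper has in mind: it explicitly says the corollary ``is a trivial consequence of Theorem~\ref{thm:charachterization}\,({\bf a}) and ({\bf b}), so we do not provide a proof.'' Your treatment of the one step requiring a tiny construction --- producing a badly approximable witness when $\sigma_p\in\mathbf{BA}$ by concatenating the nested palindromes $P_\nu$ --- is the natural one, and the paper implicitly uses the same idea later (after Lemma~\ref{lemma:good}) to prove the stronger Theorem~\ref{thm:bad:dirichlet:fulldim}.
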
 

          One more corollary of the characterization is the following result, which clarifies the size of the set $\mathbf{DI}_p^c\cap\mathbf{BA}$ in the case of $L_p$-norm 
   with $ p\in  [1,2]\cup (p_0,\infty)$.
   Also note that the result for $ p=1$ is not covered by Theorem D, because the unit ball $\mathcal{B}_1$ is just a parallelogram.

        \vskip+0.3cm
 
     \begin{theorem}\label{thm:bad:dirichlet:fulldim} Suppose that the value of $p$ satisfies either
   $ p=1, 2 $ or $ p\in  (1,2)\cup (p_0,\infty)$ with $ \varsigma_p \in \mathbf{BA}$.
    Then the set $\mathbf{DI}_p^c\cap\mathbf{BA}$ has full Hausdorff dimension.
    \end{theorem}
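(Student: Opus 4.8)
The plan is to exhibit, for each admissible $p$, a Cantor-type subset of $\mathbf{DI}_p^c\cap\mathbf{BA}$ whose Hausdorff dimension can be pushed arbitrarily close to $1$. By Theorem \ref{thm:charachterization}, membership in $\mathbf{DI}_p^c$ is governed by the occurrence, with ``boundary parameters'' $x,y$ tending to infinity, of certain finite words (built from $\sigma_p$, or from the special words $x,2,y$ and $x,1,1,y$ in the cases $p=1,2$) in the regular continued fraction expansion of $\alpha$; membership in $\mathbf{BA}$ requires the partial quotients to stay bounded. These two requirements conflict only superficially: the parameters $x,y$ surrounding each critical block must grow, but they need only grow along a sparse subsequence of block-occurrences, so between critical blocks we may keep all partial quotients equal to a fixed large constant $M$. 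Concretely, I would fix $M$ and a slowly growing sequence $x_k=y_k\to\infty$ (e.g. $x_k=k$, truncated so as never to exceed the current ``free'' digit count), and build $\alpha$ by concatenating, for $k=1,2,\dots$, a long run of $M$'s (of length $N_k$, with $N_k$ growing fast), followed by the digit $x_k$, the critical word $W$ (which is $1,1$ or $2$ for $p\in(2,p_0)$, the $\sigma_p$-palindrome \eqref{pat} or \eqref{asym} otherwise), then $y_k$, and repeat. The digits $x_k,y_k$ are unbounded, but the number is still badly approximable provided we choose the construction so that... actually it is \emph{not} badly approximable if $x_k\to\infty$; instead one must use the flexibility in the characterization: the critical patterns of ``type $x,2,y$ with $\min(x,y)\to\infty$'' are only one alternative. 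For $p=1,2$ one uses instead the \emph{two-sided infinite} families \eqref{asym}, \eqref{asym1} (``almost symmetric patterns with arbitrarily large $\nu$''), which impose no growth on individual partial quotients at all — only that arbitrarily long palindromic blocks occur. For $p\in(1,2)\cup(p_0,\infty)$ with $\sigma_p\in\mathbf{BA}$ one uses case {\bf (b2)}: if $\sigma_p$ is irrational one needs arbitrarily long prefixes $s_1,\dots,s_\nu$ of $\sigma_p$ glued palindromically, and since $\sigma_p\in\mathbf{BA}$ the $s_i$ are bounded, so these blocks have bounded digits; if $\sigma_p$ is rational one uses the finite-pattern forms in {\bf (b1)}, which do require $x,y\to\infty$, and then one falls back on the sparse-growth trick, accepting that the resulting $\alpha$ is badly approximable because... hmm.

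Let me restructure. The cleanest route, which handles all the listed $p$ uniformly, is: (i) reduce to the \emph{infinite-family} alternatives in Theorem \ref{thm:charachterization} that require only ``arbitrarily long'' prescribed palindromic blocks with \emph{no} constraint of the form $x,y\to\infty$ — these are \eqref{pat} (case {\bf b2}, $\sigma_p$ irrational), \eqref{asym} (case {\bf c}), and \eqref{asym1} (case {\bf d}); for $p=1$ and $p=2$ and for $p$ with $\sigma_p\in\mathbf{BA}$ irrational these are available. When $\sigma_p$ is rational (possibly for some $p$ with $\sigma_p\in\mathbf{BA}$, and conceivably $p=p_0$) one must use {\bf (b1)}, whose patterns force $\min(x,y)\to\infty$; there I would combine a badly-approximable ``skeleton'' (all free digits in $\{1,\dots,M\}$) with a single divergent parameter handled by noting that one may take $x=y=$ the \emph{maximal allowed bounded value is impossible}, so instead use that $\mathbf{DI}_p^c\cap\mathbf{BA}\neq\emptyset$ by Corollary \ref{coroll:bad}(b) plus a winning/dimension-transference argument is not enough — we genuinely need the characterization. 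In the rational-$\sigma_p$ subcase I would instead observe that the four finite patterns, while requiring $\min(x,y)\to\infty$, say nothing about \emph{where} these patterns sit, so one can still let $x,y$ range over $\{1,\dots,n\}$ for the $n$-th occurrence while filling the (exponentially longer) gaps with arbitrary digits in $\{1,\dots,M\}$; such an $\alpha$ has unbounded partial quotients, contradicting $\mathbf{BA}$ — so in fact for rational $\sigma_p$ one should use case {\bf (b1)} only together with the \emph{symmetric} sub-family... there is no symmetric sub-family in {\bf (b1)}. This shows the genuine content: the theorem may fail to give full dimension for rational $\sigma_p$, which is presumably why the hypothesis restricts to $p=1,2$ and irrational $\sigma_p$. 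So the final plan: for $p\in\{1,2\}$ use \eqref{asym}/\eqref{asym1} respectively; for $p\in(1,2)\cup(p_0,\infty)$ with $\sigma_p\in\mathbf{BA}$, note $\sigma_p$ irrational (a rational is badly approximable, but its c.f. is finite — one should check the convention; if $\sigma_p\in\mathbf{BA}$ is read as requiring infinite bounded c.f., then $\sigma_p$ is automatically irrational) and use \eqref{pat}.

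With the infinite-palindrome alternative in hand, the construction is standard. Fix a large integer $M$ and the alphabet $\mathcal{A}=\{1,2,\dots,M\}$. Let $P_\nu$ denote the prescribed palindromic block of ``level $\nu$'' from the relevant case (length $L_\nu$, with all digits $\le C$ where $C=\max(M,\sup_i s_i,\dots)$ is a fixed bound — crucially bounded, using $\sigma_p\in\mathbf{BA}$). Build the Cantor set $\mathcal{K}_M$ of all $\alpha=[0;a_1,a_2,\dots]$ obtained by: choosing freely $a_i\in\mathcal{A}$ except that the positions forming the $k$-th inserted block (inserted after a free gap of length $\asymp 2^k$, say) are forced to equal $P_{\nu_k}$ for a sequence $\nu_k\to\infty$. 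Every such $\alpha$ lies in $\mathbf{BA}$ (all partial quotients $\le C$) and in $\mathbf{DI}_p^c$ (the required blocks occur for arbitrarily large level). Then $\mathcal{K}_M$ is a continued-fraction Cantor set with a prescribed-digit structure; its Hausdorff dimension is bounded below by a standard mass-distribution / pressure estimate: since the forced blocks occupy a density-zero set of positions (gap lengths $\gg$ block lengths, which grow only because $L_{\nu_k}$ may grow — here choose $\nu_k$ so that $L_{\nu_k}=o(2^k)$), the dimension of $\mathcal{K}_M$ is at least the dimension of the full shift on $\mathcal{A}$ minus $o(1)$, i.e. $\ge \dim E_M - o(1)$ where $E_M=\{[0;a_1,a_2,\dots]:a_i\le M\}$. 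It is classical (Jarník, Good, Hensley) that $\dim E_M\to 1$ as $M\to\infty$. Letting $M\to\infty$ gives $\hdim(\mathbf{DI}_p^c\cap\mathbf{BA})=1$.

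The main obstacle is the dimension lower bound for $\mathcal{K}_M$ with forced blocks: one must verify that forcing a sparse (density-zero) collection of finite blocks does not decrease the Hausdorff dimension below $\dim E_M - o(1)$. The technical heart is controlling how a forced block of bounded-digit length $L$ inside a cylinder of the continued-fraction dynamics contracts diameters: by the standard estimate $|I(a_1,\dots,a_n)|\asymp q_n^{-2}$ and $q_{n+L}\le (\text{const})^L q_n$ for digits $\le C$, a forced block multiplies diameters by a factor that is merely $\exp(O(L))$, uniformly; so along any branch the ratio $\log(\text{refined diameter})/\log(\text{unrefined diameter})$ is $1+o(1)$ as long as the total forced length up to scale $n$ is $o(n)$, which our gap choice guarantees. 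Feeding this into a Billingsley-type lemma (or directly bounding a natural Bernoulli-type measure supported on $\mathcal{K}_M$) yields $\hdim\mathcal{K}_M \ge \hdim E_M - o(1)$. I expect the rest — the explicit verification that each $\alpha\in\mathcal{K}_M$ satisfies the pattern criterion of Theorem \ref{thm:charachterization}, and the invocation of $\hdim E_M \to 1$ — to be routine.
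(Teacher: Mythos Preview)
Your final plan is correct and matches the paper's approach: use the infinite-pattern alternatives \eqref{pat}, \eqref{asym}, \eqref{asym1} (which demand only arbitrarily long bounded-digit blocks, not growing boundary digits $x,y$), build a Cantor set with free digits in $\{1,\dots,M\}$ and sparsely inserted forced blocks of density zero, and push the dimension to $1$ as $M\to\infty$; the paper packages the dimension lower bound via Good's Lemma~5 (a product condition on cylinder sums) rather than a direct Billingsley/mass-distribution argument, but the content is the same. One point you gloss over for $p=2$: one must first pick $\beta\in\mathbf{BA}$ and observe that then $\beta^*=3/\beta\in\mathbf{BA}$ as well, so that the blocks \eqref{asym1} genuinely have bounded digits.
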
 
    We prove this theorem in Section \ref{sec:proof:bad:dirichlet:fulldim}.


In the direction of {\bf Problem III}, in addition to Theorem \ref{thm:di2minusd1:di1minusd2}, we provide a simple corollary of Theorem \ref{thm:charachterization}, which can be seen just by looking at the restricted patterns. Thus, we do not provide a formal proof. 
  \begin{theorem}\label{123}
  For any $2<p_1,p_2<p_0$, we have
  
{\rm ({\bf a})} $\mathbf{DI}_{p_1}=\mathbf{DI}_{p_2}, $

{\rm ({\bf b})} $\mathbf{DI}_2 \subset \mathbf{DI}_{p_2}, $

{\rm ({\bf c})}  $\mathbf{DI}_1 \subset \mathbf{DI}_{p_2}.$

This, in particular, means that the difference $\mathbf{DI}_{p_1} \setminus \mathbf{DI}_{p_2}$ is empty for any $p_1\in\{1\}\cup[2,p_0)$ and any $p_2\in(2,p_0)$.
  \end{theorem}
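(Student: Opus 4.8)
The plan is to read off every inclusion directly from the pattern characterizations in Theorem~\ref{thm:charachterization}; no metric, dimension, or dynamical input is needed, only a comparison of the pattern families occurring there. For ({\bf a}), when $2<p<p_0$ Theorem~\ref{thm:charachterization}~({\bf a}) describes $\mathbf{DI}_p^c$ as the set of $\alpha$ whose regular continued fraction contains a pattern $x,1,1,y$ or $x,2,y$ with $\min(x,y)\to\infty$, and this description makes no reference to $p$. Hence $\mathbf{DI}_{p_1}^c=\mathbf{DI}_{p_2}^c$ for all $p_1,p_2\in(2,p_0)$, and taking complements gives $\mathbf{DI}_{p_1}=\mathbf{DI}_{p_2}$.

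For ({\bf b}) and ({\bf c}) I would argue through the complements. Fix $p_2\in(2,p_0)$ and let $\alpha\in\mathbf{DI}_{p_2}^c$; by the description just recalled, the continued fraction of $\alpha$ contains patterns $x,1,1,y$ or $x,2,y$ with $\min(x,y)\to\infty$. That condition is literally the first alternative in the description of $\mathbf{DI}_2^c$ in Theorem~\ref{thm:charachterization}~({\bf d}), so $\alpha\in\mathbf{DI}_2^c$; hence $\mathbf{DI}_{p_2}^c\subseteq\mathbf{DI}_2^c$, i.e.\ $\mathbf{DI}_2\subseteq\mathbf{DI}_{p_2}$. Likewise, ``$x,2,y$ with $\min(x,y)\to\infty$'' and ``$x,1,1,y$ with $\min(x,y)\to\infty$'' are exactly the last two alternatives in the description of $\mathbf{DI}_1^c$ in Theorem~\ref{thm:charachterization}~({\bf c}), so $\alpha\in\mathbf{DI}_1^c$, whence $\mathbf{DI}_1\subseteq\mathbf{DI}_{p_2}$. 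The emptiness of $\mathbf{DI}_{p_1}\setminus\mathbf{DI}_{p_2}$ for $p_1\in[1,2]$, $p_2\in(1,2)$ is then obtained by the same method, now using Theorem~\ref{thm:charachterization}~({\bf b}) to describe $\mathbf{DI}_{p_2}^c$, which I carry out next.

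Passing again to complements, it suffices to show $\mathbf{DI}_{p_2}^c\subseteq\mathbf{DI}_{p_1}^c$. For $p_2\in(1,2)$ the set $\mathbf{DI}_{p_2}^c$ is described by Theorem~\ref{thm:charachterization}~({\bf b}) through the patterns built from $\sigma_{p_2}$ (the palindromic patterns~(\ref{pat}) when $\sigma_{p_2}$ is irrational, and their four rational analogues otherwise), with $\sigma_{p_2}$ as in~(\ref{lat3}). For $p_1=2$ one checks that $\sigma_{p_2}$ gives rise to a pair $\beta,\beta^*$ with $\beta\beta^*=3$, so that the $\sigma_{p_2}$-patterns occur among the alternatives listed in Theorem~\ref{thm:charachterization}~({\bf d}); for $p_1=1$ one matches the palindromic $\sigma_{p_2}$-patterns against the almost symmetric $\{b_n\}$-patterns of Theorem~\ref{thm:charachterization}~({\bf c}); and for $p_1\in(1,2)$ one invokes the description of the critical locus $\frak{L}_p$ in Section~\ref{sec:lpnorms:param} to see that the induced class of $\sigma_p$-patterns is the same for all $p\in(1,2)$, so that in fact $\mathbf{DI}_{p_1}=\mathbf{DI}_{p_2}$. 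In every case $\mathbf{DI}_{p_2}^c\subseteq\mathbf{DI}_{p_1}^c$, which is the claim.

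The only genuinely substantive point, and the one I expect to be the main obstacle, is this last paragraph: one must extract from the classification of critical lattices of $\mathcal{B}_p$ in Section~\ref{sec:lpnorms:param} the precise relation between the parameter $\sigma_p$ on the range $(1,2)$ and the critical-lattice data entering the descriptions of $\mathbf{DI}_1^c$ and $\mathbf{DI}_2^c$, together with the fact that this data induces one and the same class of patterns throughout $(1,2)$. Parts ({\bf a}), ({\bf b}), ({\bf c}) themselves require nothing beyond transcription of Theorem~\ref{thm:charachterization}, which is why no separate proof is given for them.
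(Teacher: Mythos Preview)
Your proofs of ({\bf a}), ({\bf b}), ({\bf c}) are correct and coincide with the paper's approach: the paper explicitly says this ``can be seen just by looking at the restricted patterns'' and gives no formal argument, and that is exactly what you do.

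Your attempt at the final sentence, however, contains genuine errors. The claim that ``the induced class of $\sigma_p$-patterns is the same for all $p\in(1,2)$, so that in fact $\mathbf{DI}_{p_1}=\mathbf{DI}_{p_2}$'' is false. By Section~\ref{sec:lpnorms:param}, $\sigma_p$ is the unique root of~(\ref{lat3}) and varies strictly monotonically on $(1,2)$, from $\sigma_1=\tfrac12$ to $\sigma_2=\tfrac{\sqrt3-1}{2}$; hence the continued fraction of $\sigma_p$, and with it the pattern family~(\ref{pat}), genuinely depends on $p$. Likewise, the assertion that ``$\sigma_{p_2}$ gives rise to a pair $\beta,\beta^*$ with $\beta\beta^*=3$'' is unsupported: for $p\in(1,2)$ the critical lattice has $\alpha(\Omega_2^\pm)=1+\sigma_p$ and $\alpha^*(\Omega_2^\pm)=\sigma_p$, and the corresponding $\beta,\beta^*$ in the sense of~(\ref{lat200}) satisfy $\beta\beta^*=(1/\sigma_p-1)^2$, which equals $3$ only at $\sigma_p=\sigma_2$, i.e.\ only for $p=2$. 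The analogous matching against the $p=1$ patterns fails for the same reason: those require $\alpha(\Omega)+\alpha^*(\Omega)=2$ by~(\ref{lat1920}), whereas here $\alpha+\alpha^*=1+2\sigma_p\neq 2$ for $p\in(1,2)$.

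In short, ({\bf a})--({\bf c}) are complete as you wrote them; the final sentence as literally stated does \emph{not} follow from ({\bf a})--({\bf c}) (those concern $p_2\in(2,p_0)$, not $p_2\in(1,2)$), and your sketched route to it rests on identities that hold only at the endpoints $p=1,2$ and not on the open interval. The arguments you labelled as the ``main obstacle'' are not merely incomplete --- they are incorrect.
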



In another direction, we formulate a statement about the set of values of $p$ for which $p$-Dirichlet  non-improvable badly approximable numbers exist.

\vskip+0.3cm

    \begin{corollary}\label{coroll:setofparameters}
        
   The set
$$
\frak{P} = \{ p\in [1,\infty):  \,\,\text{there exist  $p$-Dirichlet  non-improvable badly approximable numbers}\,\,
\alpha\}
$$
has zero Lebesgue measure, is dense in $ (1,2)\cup (p_0,\infty)$, is absolutely winning in any interval $[a,b]\subset (1,2)\cup (p_0,\infty)$, and hence has full Hausdorff dimension.
\end{corollary}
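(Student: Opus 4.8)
The plan is to show that, away from a harmless finite set of exponents, the set $\frak{P}$ is a bi-analytic copy of $\mathbf{BA}$ inside each of the intervals $(1,2)$ and $(p_0,\infty)$, and then to transport to $\frak{P}$ the measure, density and winning properties of $\mathbf{BA}$.

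First I would reduce to the function $p\mapsto\sigma_p$. By Corollary \ref{coroll:bad}(\textbf{a}) no $p\in(2,p_0)$ lies in $\frak{P}$, and by Corollary \ref{coroll:bad}(\textbf{b})
$$
\frak{P}\cap\bigl((1,2)\cup(p_0,\infty)\bigr)=\{\,p\in(1,2)\cup(p_0,\infty):\ \sigma_p\in\mathbf{BA}\,\}.
$$
The exponents $p=1,2$ lie in $\frak{P}$ (this is contained in Theorem \ref{thm:bad:dirichlet:fulldim}, or can be read off Theorem \ref{thm:charachterization}), and whether $p_0\in\frak{P}$ is irrelevant: all four assertions concern only the set $(1,2)\cup(p_0,\infty)$ and are unaffected by adjoining or deleting finitely many points — this is clear for Lebesgue measure, density and Hausdorff dimension, and also for absolute winning on a closed subinterval $[a,b]$ disjoint from that finite set. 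So it is enough to analyse $p\mapsto\sigma_p$ on each of $(1,2)$ and $(p_0,\infty)$ and pull the three properties of $\mathbf{BA}$ back along it.

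The crucial input, which I would extract from the description of the critical loci $\frak{L}_p$ in Section \ref{sec:lpnorms:param} (in particular from the definition of $\sigma_p$ in (\ref{lat3})), is that on each of $(1,2)$ and $(p_0,\infty)$ the map $p\mapsto\sigma_p$ is a real-analytic diffeomorphism onto an open subinterval of $(0,1)$; concretely, that $\sigma_p$ is there an explicit analytic function of $p$ (or, if only implicitly defined, a real-analytic branch via the implicit function theorem) whose derivative never vanishes, i.e. that $\sigma_p$ is strictly monotone on each of the two intervals. I expect this monotonicity computation to be the only substantial point; everything after it is soft transfer. If the explicit formula for $\sigma_p$ turns out to be too unwieldy to differentiate cleanly, the density and full-dimension parts can alternatively be obtained from a crude continuity-and-surjectivity statement for $p\mapsto\sigma_p$, but the clean route to the winning and measure statements really does want the diffeomorphism property.

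Given this, the conclusion follows by pulling back three standard facts about $\mathbf{BA}$. Writing $\sigma$ for the diffeomorphism on either interval and $I_\sigma\subset(0,1)$ for its open image: since $\mathbf{BA}$ is Lebesgue-null and $\sigma^{-1}$ is locally Lipschitz, $\sigma^{-1}(\mathbf{BA}\cap I_\sigma)$ is null, so $\frak{P}$ has measure zero; since $\mathbf{BA}$ is dense in $\mathbb{R}$ (it contains every real quadratic irrational) and $\sigma$ is a homeomorphism onto $I_\sigma$, the preimage is dense, so $\frak{P}$ is dense in $(1,2)\cup(p_0,\infty)$; and since $\mathbf{BA}$ is absolutely winning (by the theorem of McMullen), while absolute winning is inherited by intersections with open sets and preserved under $C^1$ diffeomorphisms, for every closed interval $[a,b]\subset(1,2)\cup(p_0,\infty)$ the set $\frak{P}\cap[a,b]=\sigma^{-1}\bigl(\mathbf{BA}\cap\sigma([a,b])\bigr)$ is absolutely winning in $[a,b]$. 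Finally, absolutely winning subsets of a real interval have full Hausdorff dimension, so $\hdim\frak{P}=1$. The main obstacle, as indicated, will be establishing the strict monotonicity (and smoothness) of $p\mapsto\sigma_p$ on the two intervals from the explicit description of the critical lattices.
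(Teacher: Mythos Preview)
Your plan is essentially the paper's proof: reduce via Corollary~\ref{coroll:bad} to the condition $\sigma_p\in\mathbf{BA}$ and then transport the null-measure, density, and absolute-winning properties of $\mathbf{BA}$ along the diffeomorphism between $p$ and $\sigma$ determined by (\ref{lat3}). The only step you left open---the strict monotonicity of $p\mapsto\sigma_p$---is exactly what the paper supplies, by checking directly that both $\partial g/\partial\sigma$ and $\partial g/\partial p$ are positive and bounded away from zero on the relevant rectangles (the second via elementary estimates on $\sigma^p\log\sigma+(1+\sigma)^p\log(1+\sigma)$), after which the implicit function theorem gives a smooth inverse $p=h(\sigma)$ with derivative bounded from zero on compacta.
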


Note that from Corollary \ref{coroll:bad} and Theorem \ref{thm:bad:dirichlet:fulldim} for the set $\frak{P}$ from Corollary \ref{coroll:setofparameters} we have 
$$
\frak{P}=\left\{ p\in [1,\infty):  \,\,\hdim \mathbf{DI}_p^c\cap\mathbf{BA} = 1 \right\}.
$$

\vskip+0.3cm
Definition of {\it absolutely winning sets}, their properties, and the related discussion can be found in  \cite{mcm}. 

Corollary \ref{coroll:setofparameters} is proven in Section \ref{sec:proof:setofparameters}.

Lastly, we note that in all previous statements we applied Theorem \ref{thm:charachterization} to get metrical results. However, as it can be seen from the formulation of Theorem \ref{thm:charachterization}, it is even easier to deduce the Dirichlet (non-)improvability property for a particular number as long as one knows its continued fraction expansion. As an example, we provide the following interesting result.

\begin{corollary}\label{coroll:number:E}
For $p\in[1,\infty]$, the number $e=\sum_{n=0}^\infty \frac{1}{n!}$ satisfies $e\in \mathbf{DI}_p$ if and only if $p\in(1,2)\cup(p_0,\infty)$.
\end{corollary}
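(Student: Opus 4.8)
The plan is to read off the Dirichlet behaviour of $e$ from Theorem \ref{thm:charachterization} using Euler's classical continued fraction expansion
$$
e=[2;1,2,1,1,4,1,1,6,1,1,8,\ldots],
$$
so that $a_0=2$ and, for every integer $k\ge1$,
$$
a_{3k-2}=a_{3k}=1,\qquad a_{3k-1}=2k.
$$
Since the partial quotients of $e$ are unbounded, $e\notin\mathbf{BA}$, and as $e$ is irrational Theorem B gives $e\notin\mathbf{DI}_\infty$, in agreement with $\infty\notin(1,2)\cup(p_0,\infty)$.

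Next I would dispose of the exponents $p\in\{1,2\}\cup(2,p_0]$ by exhibiting a pattern of the shortest type $x,1,1,y$. Taking $x=a_{3k-1}=2k$ and $y=a_{3k+2}=2k+2$, the word $2k,1,1,2k+2$ occurs in the expansion of $e$ for every $k\ge1$, and $\min(x,y)=2k\to\infty$. Hence, by Theorem \ref{thm:charachterization}(\textbf{a}) for $2<p<p_0$, by Remark \ref{remark:p0} for $p=p_0$, by Theorem \ref{thm:charachterization}(\textbf{d}) for $p=2$, and by Theorem \ref{thm:charachterization}(\textbf{c}) for $p=1$, we obtain $e\in\mathbf{DI}^c_p$, i.e.\ $e\notin\mathbf{DI}_p$; and none of these $p$ lies in $(1,2)\cup(p_0,\infty)$.

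The heart of the argument is the regime $p\in(1,2)\cup(p_0,\infty)$, where Theorem \ref{thm:charachterization}(\textbf{b}) applies and where I must show that \emph{no} pattern listed there occurs in the expansion of $e$ with the prescribed asymptotics, so that $e\in\mathbf{DI}_p$. This rests on two elementary observations. \emph{First}, among $a_1,a_2,\ldots$ the only entries that can be made arbitrarily large are the $a_{3k-1}=2k$; these sit at the positions $\equiv 2\pmod 3$ and each is flanked by $1$'s. Hence, if one of the four patterns $x,[\text{core}],y$ of case (\textbf{b1}) occurs along a sequence with $\min(x,y)\to\infty$, then for all sufficiently late occurrences $x=a_{3i-1}$, $y=a_{3j-1}$ with $i<j$, so $[\text{core}]=a_{3i},a_{3i+1},\ldots,a_{3j-2}$ has length $3(j-i)-1$: for $j-i=1$ this is the two-letter word $1,1$, which is shorter than each of the four admissible cores (all of length $\ge3$), and for $j-i\ge2$ it contains the entry $a_{3i+2}=2(i+1)$, which changes with the occurrence and so cannot match a fixed core. \emph{Second}, a short check on the residue modulo $3$ of the position of a central $1$ shows that the only palindromic block of odd length $\ge3$ with central entry $1$ that can occur in the expansion of $e$ is $2,1,2$, of length $3$; thus $e$ has no such block of length $\ge5$, so the palindromic patterns \eqref{pat} of case (\textbf{b2}), which must occur with $\nu$ unbounded (hence of unbounded length), do not occur. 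In every subcase of (\textbf{b}) the criterion of Theorem \ref{thm:charachterization}(\textbf{b}) therefore fails, giving $e\in\mathbf{DI}_p$.

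Combining the three regimes yields $e\in\mathbf{DI}_p$ exactly when $p\in(1,2)\cup(p_0,\infty)$. The only delicate point is the last paragraph, where one must rule out \emph{all} of the pattern families of case (\textbf{b}) at once; here everything is done by the rigid structure of the continued fraction of $e$ — the large entries $2k$ are isolated and surrounded by $1$'s, there are never three consecutive $1$'s, and $2$ occurs only once — which collapses the verification to the two combinatorial facts above.
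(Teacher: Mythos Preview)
Your proof is correct and follows the same strategy as the paper: exhibit the $x,1,1,y$ patterns with $\min(x,y)\to\infty$ to handle $p\in\{1,2,p_0\}\cup(2,p_0)$ and $p=\infty$, and then rule out the patterns of case~(\textbf{b}) for $p\in(1,2)\cup(p_0,\infty)$. The paper disposes of the latter regime with the one-line observation that the large partial quotients $2k$ form a strictly increasing sequence, so no (almost) symmetric pattern can occur; your residue-modulo-$3$ analysis makes the same point explicitly and is in fact more careful. One harmless slip: the block $2,1,2$ occurs only if you include $a_0=2$, so in $a_1,a_2,\ldots$ there are actually \emph{no} odd-length palindromes with central $1$ at all---which only strengthens your conclusion.
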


\begin{proof}
It is well-known that $e$ has a continued fraction
\begin{equation}\label{cf:E}
e=[2;1,2,1,1,4,\ldots,2k,1,1,2k+2,1,1,\ldots].
\end{equation}

One can see that it has infinitely many patterns $\,x,1,1,y\,$ with $\min(x,y)\to\infty$. By Theorem \ref{thm:charachterization} and Remark \ref{remark:p0}, the existence of those patterns guarantees that $e\notin \mathbf{DI}_p$ for $2<p<p_0$, $p=1$, $p=2$ and $p=p_0$. Also, $e\notin\mathbf{DI}_\infty$, because it is known that $\mathbf{DI}_\infty = \mathbf{BA}\cup\Q$.

On the other hand, if $p\in(1,2)\cup(p_0,\infty)$, then by Theorem \ref{thm:charachterization}, for the number $e$ to be $p$-Dirichlet non-improvable, it needs to have symmetrical patterns from ({\bf b1}) or ({\bf b2}), depending on the irrationality of $\varsigma_p$. As large partial quotients in \eqref{cf:E} form a strictly growing sequence, we cannot have symmetrical (in the case $\varsigma_p\notin\Q$) or almost symmetrical (in the case $\varsigma_p\in\Q$) patterns in the continued fraction of $e$, so we have $e\in\mathbf{DI}_p$.
    
\end{proof}

 \section{  Application of compactness argument}\label{sec:application:compactness}\label{compaa}

\vskip+0.3cm

  In this section, we recall well-known statements about critical lattices.
   
  All the definitions  concerning convergence in the space of lattices, compactness
  and the related properties 
  are discussed 
  in Chapter V of  the  book
  \cite{CaB} or in a modern survey by Kleinbock, Shash and Starkov \cite{MR1928528} from a dynamical point of view
  (see Subsection 1.3, example {\bf 3d} from \cite{MR1928528}). 
  We identify the space of unimodular  lattices with $\frak{X} ={\rm SL}_2 (\mathbb{R})/{\rm SL}_2 (\mathbb{Z})$.
   
    Here, we are interested in normalised critical lattices of the form
\begin{equation}\label{loci}
  \overline{\Lambda}  = \frac{1}{\sqrt{\Delta_F}} \cdot \Lambda ,\,\,\,\,\, \Lambda \in \frak{L}_F,\,\,\,\,
  {\rm det}\, \overline{\Lambda} = 1.
  \end{equation}
  We define the set of all the lattices of the form (\ref{loci}) as
  {\it normalised critical locus} and denote it simply by  $ \overline{\frak{L}}_F$.
  As for discussion concerning critical loci, one can see a recent paper \cite{KKR}.
  In particular, $ \overline{\frak{L}}_F$ is a compact set in the space of lattices $\frak{X}$.

    \vskip+0.3cm
    
    Recall  that  we always have the inequality
      $$
    d_F (\alpha) \le 
     \frac{1}{\sqrt{\Delta_F}}.
     $$

   For our purpose, we need the following statement, which is an immediate corollary  of 
   Mahler's theory (see Chapter V from   \cite{CaB}) and compactness of  $ \overline{\frak{L}}_F$.
  
    \vskip+0.3cm
    
  \begin{proposition}\label{proposition:1}
 Real number  $\alpha$   satisfies the equality 
         $
d_F(\alpha) =     \frac{1}{\sqrt{\Delta_F}}
     $
     (is $F$-Dirichlet non-improvable)
      if and only if  there exists a lattice $\Lambda \in \frak{L}_F$ 
   and a subsequence $\tau _k \to \infty, \, k \to \infty$ such that the sequence of unimodular  lattices
    $\Lambda_\alpha (\tau_{k})$
  converges to  $\overline{\Lambda}$ in the space of lattices.
   \end{proposition}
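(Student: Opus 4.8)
The plan is to unpack both implications using Mahler's compactness criterion applied to the normalised critical locus $\overline{\frak{L}}_F$, together with the elementary observation that $\lambda_1(t)=\lambda_1(\Lambda_\alpha(t),\mathcal{B}_F)$ depends continuously on $t$ (hence on the lattice $\Lambda_\alpha(t)$ in $\frak{X}$, since $\lambda_1(\cdot,\mathcal{B}_F)$ is a continuous function on the space of lattices for a fixed convex body). The key numerical fact I would invoke first is that for a lattice $\Lambda$ of covolume $1$, one has $\lambda_1(\Lambda,\mathcal{B}_F)\le 1/\sqrt{\Delta_F}$, with equality precisely when $\Lambda$ is a normalised critical lattice, i.e. $\Lambda\in\overline{\frak{L}}_F$; this is exactly the definition of $\Delta_F$ rewritten in terms of dilates of $\mathcal{B}_F$, since $\lambda_1(\Lambda,\mathcal{B}_F)^{-1}$ equals the largest $\rho$ with $\rho\mathcal{B}_F$ containing no nonzero point of $\Lambda$, and that quantity is maximised exactly on the critical locus.

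For the "if" direction, suppose there is $\Lambda\in\frak{L}_F$ and $\tau_k\to\infty$ with $\Lambda_\alpha(\tau_k)\to\overline{\Lambda}=\tfrac{1}{\sqrt{\Delta_F}}\Lambda$ in $\frak{X}$. Since each $\Lambda_\alpha(\tau_k)$ is unimodular and $\lambda_1(\cdot,\mathcal{B}_F)$ is continuous, $\lambda_1(\tau_k)=\lambda_1(\Lambda_\alpha(\tau_k),\mathcal{B}_F)\to\lambda_1(\overline{\Lambda},\mathcal{B}_F)=1/\sqrt{\Delta_F}$. Combined with the universal upper bound $\lambda_1(t)\le 1/\sqrt{\Delta_F}$, this forces $\limsup_{t\to\infty}\lambda_1(t)=1/\sqrt{\Delta_F}$, i.e. $d_F(\alpha)=1/\sqrt{\Delta_F}$, so $\alpha$ is $F$-Dirichlet non-improvable.

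For the "only if" direction, suppose $d_F(\alpha)=1/\sqrt{\Delta_F}$, so there is $t_k\to\infty$ with $\lambda_1(t_k)\to 1/\sqrt{\Delta_F}$. The lattices $\Lambda_\alpha(t_k)$ all lie in $\frak{X}$ and have first minimum (with respect to $\mathcal{B}_F$) bounded below away from $0$ for large $k$; by Mahler's criterion the sequence is relatively compact, so passing to a subsequence $\tau_k$ we get $\Lambda_\alpha(\tau_k)\to L$ for some unimodular $L$. Continuity of $\lambda_1(\cdot,\mathcal{B}_F)$ gives $\lambda_1(L,\mathcal{B}_F)=1/\sqrt{\Delta_F}$, which by the equality case above means $L\in\overline{\frak{L}}_F$, i.e. $L=\overline{\Lambda}$ for some $\Lambda\in\frak{L}_F$. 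This is the claimed convergence. The main obstacle — really the only point requiring care — is justifying the equality case: that $\lambda_1(\Lambda,\mathcal{B}_F)=1/\sqrt{\Delta_F}$ for a unimodular $\Lambda$ implies $\Lambda$ is (a normalisation of) a critical lattice, including the fact that the extremal $\rho\mathcal{B}_F$ is \emph{admissible} in the closed sense (boundary points allowed) so that compactness genuinely delivers a critical lattice rather than merely a limit of near-critical ones; this follows from the definition of $\Delta_F$ via an infimum attained by compactness, exactly as recalled in the excerpt.
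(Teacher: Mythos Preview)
Your proof is correct and follows essentially the same route as the paper: both directions rely on the continuity of $\lambda_1(\cdot,\mathcal{B}_F)$ on the space of lattices, Mahler's compactness criterion to extract a convergent subsequence, and the identification of $\overline{\frak{L}}_F$ with the set of unimodular lattices attaining $\lambda_1=1/\sqrt{\Delta_F}$. Your treatment of the equality case is a bit more explicit than the paper's, but the underlying argument is identical.
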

    
      \vskip+0.3cm

\begin{proof}
    
First, suppose that there is a sequence $\{\tau_k\}_{k\in \N}$ such that 
$\Lambda_\alpha(\tau_k) \to \overline{\Lambda} \in \overline{\frak{L}}_F$.
Then, by continuity of the first successive minimum, we have
$\lambda_1(\Lambda_\alpha(\tau_k), \mathcal{B}_F)\to \lambda_1(\overline{\Lambda}, \mathcal{B}_F)=\frac{1}{\sqrt{\Delta_F}}$.  This means that $d_F (\alpha) = \frac{1}{\sqrt{\Delta_F}}$ and so $\alpha$ is $F$-Dirichlet non-improvable.

On the other hand,
if $\alpha$ is $F$-Dirichlet non-improvable, then there exists a sequence $\{\tau_k\}_{k\in \mathbb{N}}$ with
$\tau_k \to \infty$, such that 
$$ \lambda_1  (\Lambda_\alpha(\tau_k), \mathcal{B}_F)\to  \frac{1}{\sqrt{\Delta_F}}$$
and
$ \lambda_1  (\Lambda_\alpha(\tau_k), \mathcal{B}_F)$ increases monotonically. By the Mahler criterion 
there exists a compact $ \frak{X}_1 \subset \frak{X}$ such that $\Lambda_\alpha(\tau_k)\in \frak{X}_1$ for every $ k\in \mathbb{N}$.
So we can choose a subsequence $\tau_{k_l}, l \to \infty$ such that 
$ \lambda_1  (\Lambda_\alpha(\tau_{k_l}), \mathcal{B}_F)\to \frac{1}{\sqrt{\Delta_F}}$ 
and $ \Lambda_\alpha(\tau_{k_l})$ converges to a certain $\overline{\Lambda}_\alpha \in \frak{X}_1$.
Function $ \overline{\Lambda} \mapsto  \lambda_1  (\overline{\Lambda}, \mathcal{B}_F)$ is continuous in $\frak{X}$.
So $\lambda_1  (\overline{\Lambda}_\alpha, \mathcal{B}_F) = \frac{1}{\sqrt{\Delta_F}}$. But this means that 
$ \Lambda_\alpha(\tau_{k_l})\to  \overline{\Lambda}_\alpha \in \overline{\frak{L}}_F$ and the proposition is proven.

\end{proof}
       
      \vskip+0.3cm 
      
     Proposition \ref{proposition:1} gives an ineffective criterion of $F$-Dirichlet improvability for general $F$.
      The main purpose of the present paper is to prove effective versions of this criterion for the case of $L_p$-norms in terms of regular continued fractions. We consider this special case in detail in the next section.

         \vskip+0.3cm
    \section{  $L_p$-norms and parametrization of critical lattices}\label{sec:lpnorms:param}
  \vskip+0.3cm
  
  In this section, we describe the well-known structure of the sets of critical lattices for
    $\mathcal{B}_p$
  for different values of $p\in [1,\infty)$.
  First, we consider the cases when the locus $\frak{L}_{p}$ consists of isolated points, and then we will deal with the cases $ p =1,2$ when the locus consists of continuous families of lattices.
  The notation in this section is not very standard, but nevertheless, it is very convenient for our purposes.

 In all the cases we should take into account the following characteristics.
 Let $\Lambda $ be a critical lattice  for
    $\mathcal{B}_p$. 
    Such a lattice has two  special points
    $\pmb{z}' = (a,b),\, \pmb{z}'' = (c,d) \in \Lambda$ on the boundary of $\mathcal{B}_p$
which are important for us. Besides these two points,
points      $-\pmb{z}',- \pmb{z}'' $ belong to the boundary of $\mathcal{B}_p$
as well as some of combinations       $\pm \pmb{z}'\pm \pmb{z}'' $ and even the combination 
$\pm ( \pmb{z}''+2\pmb{z}')$ in the special subcase of the case $p=1$.
In every case, we will explain which of these combinations belong to the boundary.
  We may assume that $ a,b,c,d\ge 0$, by symmetry. Then 
  $$ \Lambda = \Omega \cdot \mathbb{Z}^2 , \,\,\, 
     \Omega =
    \left(
    \begin{array}{cc}
  a& c 
  \cr
  b& d
  \end{array} \right)
  .$$
For each critical lattice $\Lambda$ from the considerations below, we will write out the matrix $\Omega$ and calculate the values
  \begin{equation}\label{alef}
    \alpha(\Omega) = -\frac{b}{d},\,\,\,\,\,
  \alpha^* (\Omega ) =  \frac{a}{c}
  \end{equation}
  (here if $c $ or $ d$ are equal to zero, we assume that the corresponding values  of
     $ \alpha(\Omega),
  \alpha^* (\Omega ) $ are equal to infinity).
  We should note that 
  if we consider a normalized critical lattice
  $
   \overline{\Lambda} = \overline{\Omega} \cdot \mathbb{Z}^2 
   $ defined in (\ref{loci}),
these values remain the same:     $ \alpha(\Omega) = 
  \alpha (\overline{\Omega} ) ,  \alpha^*(\Omega) = 
  \alpha^* (\overline{\Omega} ) .$

        \vskip+0.3cm
        While describing critical lattices, we consider several cases.
        
 The results for $ p\neq 1,2, \infty$ are related to a conjecture by Minkowski, which was studied by many authors (see references from \cite{AD} and \cite{maly}). The final solution was claimed in \cite{maly}, and it highly relies on computer calculations. However, in the original work \cite{maly}, and several prior manuscripts on this conjecture, no code was provided, making verification of the claim almost impossible. In Appendix \ref{appendinx:1} we provide an independent verification of the proof of Minkowski's conjecture. Appendix \ref{appendinx:2} contains three pseudocode for some of the programs we used for a better understanding of the strategy.
 
Davis \cite{D} showed that there exists $ p_0 \in (2.57, 2.58) $ such that the critical lattices in intervals
 $(2,p_0)$ and  $(1,2)\cup (p_0,\infty)$ are essentially different.  We describe the critical lattices in these cases in Subsections 5.1, 5.2 below.
 
We also note that the explicit formula for the value $p_0$ is yet unknown, as the proof started in \cite{D} and finalized in \cite{w1,w2} is ineffective. It is a good open question to provide a closed-form formula (if possible) for $p_0$.

 The results for the cases $ p= 1,2 $ are well-known. Nevertheless, we should mention that we formulate all these results below in a new form, which is convenient for our purposes, and thus it may differ from the very standard one.
   Cases $ p=1,2$ are considered in 
  Subsections 5.4, 5.5.

         \vskip+0.3cm
   {\bf 5.1. Case $ 2<p<p_0$.}
  In this case the only two (congruent) critical lattices for the ball $\mathcal{B}_p$ are
$$     \Lambda_1 =  \Omega_1 \cdot \mathbb{Z}^2\,\,\,\,
    \text{and}\,\,\,\,
       \Lambda_1' =   
        \Omega_1' \cdot \mathbb{Z}^2,\,\,\,\,
    $$
    {where}
     \begin{equation}\label{lat01}
    \Omega_1 =
      \left(
  \begin{array}{cc}
   \left(1- \frac{1}{2^{1/p}}\right)^{\frac{1}{p}}
& \left(1- \frac{1}{2^{1/p}}\right)^{\frac{1}{p}}
  \cr
  - \frac{1}{2} 
& \frac{1}{2} 
  \end{array}
  \right)    
   ,\,\,\,
   \Omega_1' =     
     \left(
    \begin{array}{cc}
   \frac{1}{2} 
& \frac{1}{2} 
  \cr
  -\left(1- \frac{1}{2^{1/p}}\right)^{\frac{1}{p}}
& \left(1- \frac{1}{2^{1/p}}\right)^{\frac{1}{p}}
  \end{array} \right) .
  \end{equation} 
  
  It is clear that 
    \begin{equation}\label{lat11}
   \alpha(\Omega_1) =
     \alpha(\Omega_1') =
        \alpha^*(\Omega_1) =
     \alpha^*(\Omega_1') =
1.
    \end{equation}
  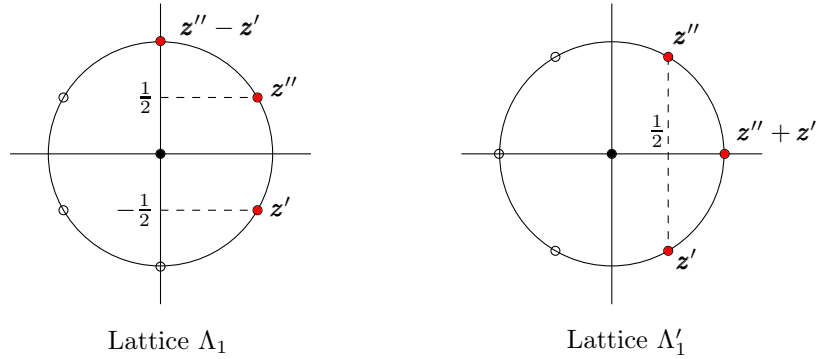
\begin{figure}[h] 
  \centering
  \begin{tikzpicture}

    \node[draw=black, circle,inner sep=1.2pt]  at (-3,2) {};
     \node[fill=black,circle,inner sep=1.1pt]   at (-3,2) {};

    \node[draw=black, circle,inner sep=1.2pt]  at (3,2) {};
     \node[fill=black,circle,inner sep=1.1pt]   at (3,2) {};

           \node[draw=black, circle,inner sep=30pt] at (-3,2) {};
             \node[draw=black, circle,inner sep=30pt] at (3,2) {};
           
             \draw (-5,2) -- (-1,2);
               \draw (1,2) -- (5,2);
                 \draw (3,0) -- (3,4);
                      \draw (-3,0) -- (-3,4);

        \draw[dashed] (3.75,3.29) -- (3.75,0.71);

        \draw[dashed] (-3,2.75) --  (-1.71,2.75);
        
         \draw[dashed] (-3,1.25) --  (-1.71,1.25);
         
          \draw (-3.2,2.75)  node {$\frac{1}{2}$}; 
               \draw (-3.35,1.25)  node {$-\frac{1}{2}$}; 
    
           \node[draw=black, circle,inner sep=1.2pt]  at (-3,3.5) {};
     \node[fill=red,circle,inner sep=1.1pt]   at (-3,3.5) {};
     
          \node[draw=black, circle,inner sep=1.2pt]  at (4.5,2) {};
     \node[fill=red,circle,inner sep=1.1pt]   at (4.5,2) {};
     
           \node[draw=black, circle,inner sep=1.2pt]  at (-3,0.5) {};
           
                     \node[draw=black, circle,inner sep=1.2pt]  at (1.5,2) {};

           \node[draw=black, circle,inner sep=1.2pt]  at (-1.71,2.75) {};
     \node[fill=red,circle,inner sep=1.1pt]   at (-1.71,2.75){};

            \node[draw=black, circle,inner sep=1.2pt]  at (-1.71,1.25) {};
     \node[fill=red,circle,inner sep=1.1pt]   at (-1.71,1.25){};

           \node[draw=black, circle,inner sep=1.2pt]  at (-4.29,2.75) {};
           
            \node[draw=black, circle,inner sep=1.2pt]  at (-4.29,1.25) {};

           \node[draw=black, circle,inner sep=1.2pt]  at (3.75,3.29) {};
     \node[fill=red,circle,inner sep=1.1pt]   at (3.75,3.29){};

            \node[draw=black, circle,inner sep=1.2pt]  at (3.75,0.71) {};
     \node[fill=red,circle,inner sep=1.1pt]   at (3.75,0.71){};

            \node[draw=black, circle,inner sep=1.2pt]  at (2.25,0.71) {};

            \node[draw=black, circle,inner sep=1.2pt]  at (2.25,3.29) {};

  \draw (-1.4,1.3)  node {$\pmb{z}'$}; 
  
  \draw (-1.35,2.9)  node {$\pmb{z}''$};

  \draw (-2.2,3.7)  node {$\pmb{z}''-\pmb{z}'$};

    \draw (4,0.6)  node {$\pmb{z}'$}; 
  
  \draw (4,3.6)  node {$\pmb{z}''$};

    \draw (3.6,2.3)  node {$\frac{1}{2}$}; 

  \draw (5.2,2.3)  node {$\pmb{z}''+\pmb{z}'$};

  \draw (-2.9,-0.5)  node {Lattice $\Lambda_1$}; 
    \draw (3.2,-0.5)  node {Lattice $\Lambda_1'$};

  \end{tikzpicture}
  
       \caption{ Critical lattices in the case  $ 2<p<p_0$.
       }
       \label{fff1}  
\end{figure}

For $\Lambda_1$ points $\pm ( \pmb{z}''-\pmb{z}')$ belong to the boundary of  $\mathcal{B}_p$. For $\Lambda_1'$ points $\pm ( \pmb{z}''+\pmb{z}')$ belong to the boundary of  $\mathcal{B}_p$.
Points $ \pmb{z}'$ and $\pmb{z}''$  for lattices $\Lambda_1,\Lambda_1'$  and all other lattice point on the boundary of $\mathcal{B}_p$ are visualised on Fig. \ref{fff1}. Here and in the sequel, the circles symbolize the unit disks      $\mathcal{B}_p$.

           \vskip+0.3cm
   {\bf 5.2. Case $ 1<p<2$ and $p> p_0$.}
  Here we should define $\varsigma_p \in \left(0,1 \right)$ as the unique root of the equation
   \begin{equation}\label{lat3}
\varsigma^p+(1+\varsigma)^p = 2
.\end{equation}
Then the only two (congruent) critical lattices  in this case are
  \begin{equation}\label{lat1}
     \Lambda_2^{\pm} =  \Omega_2^{\pm} \cdot \mathbb{Z}^2\,\,\,\,
     \text{where}
    \,\,\,\,
    \Omega_2^{\pm} =
    \left(
    \begin{array}{cc}
  \frac{\varsigma_p}{2^{{1}/{p}}}  &\frac{1}{2^{{1}/{p}}} 
  \cr
  \mp \frac{1+\varsigma_p}{2^{{1}/{p}}}  &\pm\frac{1}{2^{{1}/{p}}} 
\end{array} \right)       .
  \end{equation}
    We see  that 
    \begin{equation}\label{lat12}
   \alpha(\Omega_2^{\pm}) = 1+
   \varsigma_p >1>
        \alpha^*(\Omega_2^{\pm}) =  \varsigma_p
        .
    \end{equation}
    
      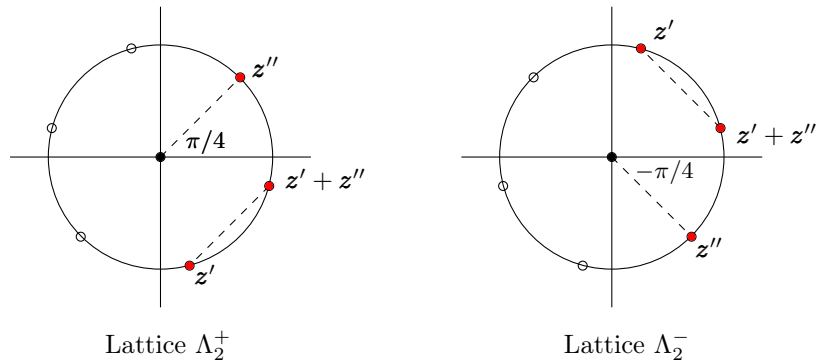
\begin{figure}[h]
  \centering
  \begin{tikzpicture}

    \node[draw=black, circle,inner sep=1.2pt]  at (-3,2) {};
     \node[fill=black,circle,inner sep=1.1pt]   at (-3,2) {};

    \node[draw=black, circle,inner sep=1.2pt]  at (3,2) {};
     \node[fill=black,circle,inner sep=1.1pt]   at (3,2) {};

           \node[draw=black, circle,inner sep=30pt] at (-3,2) {};
             \node[draw=black, circle,inner sep=30pt] at (3,2) {};
           
             \draw (-5,2) -- (-1,2);
               \draw (1,2) -- (5,2);
                 \draw (3,0) -- (3,4);
                      \draw (-3,0) -- (-3,4);

        \draw[dashed]  (-1.555,1.615)  -- (-2.613,0.555) ;

        \draw[dashed] (-3,2) --  (-1.94,3.06) ;

         \draw[dashed]  (4.445,2.385)-- (3.387,3.445)  ;

        \draw[dashed] (3,2) --  (4.06,0.94) ;

           \node[draw=black, circle,inner sep=1.2pt]  at (-1.94,3.06) {};
     \node[fill=red,circle,inner sep=1.1pt]   at (-1.94,3.06)  {};

           \node[draw=black, circle,inner sep=1.2pt]  at (-4.06,0.94) {};
     
          \node[draw=black, circle,inner sep=1.2pt]  at (-3.387,3.445) {};
          
              \node[draw=black, circle,inner sep=1.2pt]  at (-4.445,2.385) {};

            \node[draw=black, circle,inner sep=1.2pt]  at (-2.613,0.555) {};
     \node[fill=red,circle,inner sep=1.1pt]   at (-2.613,0.555) {};
     
           \node[draw=black, circle,inner sep=1.2pt]  at (-1.555,1.615) {};
     \node[fill=red,circle,inner sep=1.1pt]   at (-1.555,1.615) {};

       \node[draw=black, circle,inner sep=1.2pt]  at (4.06,0.94) {};
     \node[fill=red,circle,inner sep=1.1pt]   at (4.06,0.94)  {};

           \node[draw=black, circle,inner sep=1.2pt]  at (1.96,3.06) {};
     
          \node[draw=black, circle,inner sep=1.2pt]  at (2.613,0.555) {};
          
              \node[draw=black, circle,inner sep=1.2pt]  at (1.555,1.615) {};

            \node[draw=black, circle,inner sep=1.2pt]  at (3.387,3.445) {};
     \node[fill=red,circle,inner sep=1.1pt]   at (3.387,3.445) {};
     
           \node[draw=black, circle,inner sep=1.2pt]  at (4.445,2.385) {};
     \node[fill=red,circle,inner sep=1.1pt]   at (4.445,2.385) {};

  \draw(-2.4,0.44)  node {$\pmb{z}'$}; 
  
  \draw (-1.6,3.2)  node {$\pmb{z}''$}; 
    \draw (-2.4,2.2)  node {\begin{small}$\pi/4$\end{small}};

  \draw (-0.8,1.7)  node {$\pmb{z}'+\pmb{z}''$};

   \draw(4.3,0.8)  node {$\pmb{z}''$}; 
  
  \draw (3.7,3.7)  node {$\pmb{z}'$}; 
    \draw (-2.4,2.2)  node {\begin{small}$\pi/4$\end{small}}; 
  
    \draw (3.7,1.8)  node {\begin{small}$-\pi/4$\end{small}};

  \draw (5.2,2.3)  node {$\pmb{z}'+\pmb{z}''$};

  \draw (-2.9,-0.5)  node {Lattice $\Lambda_2^+$}; 
    \draw (3.2,-0.5)  node {Lattice $\Lambda_2^-$};

  \end{tikzpicture}
  
   \caption{ Critical lattices in  the case $ 1<p<2$ and $p> p_0$.} \label{fff2}
\end{figure}

For both lattices $\Lambda_2^{\pm}$ points $\pm ( \pmb{z}''+\pmb{z}')$ belong to the boundary of  $\mathcal{B}_p$.
 Points $ \pmb{z}'$ and $\pmb{z}''$  for lattices $\Lambda_2^{\pm}$ are shown on Fig. \ref{fff2}.

           \vskip+0.3cm
   {\bf 5.3. Case $ p=p_0$.}
In this case, we have four critical lattices $ \Lambda_1, \Lambda_1', \Lambda_2^{\pm}$.

    \vskip+0.3cm

               \vskip+0.3cm
   {\bf 5.4. Case $ p=1$.}
In this case, we have the following two one-parameter families of critical lattices,
both depending on a real parameter $ a\in \left[0,\frac{1}{2}\right)$:
\begin{equation}\label{lat119}
\Lambda_{3}^{\pm}(a)  =  \Omega_3^{\pm} (a) \cdot \mathbb{Z}^2,\,\,
 \,\,
     \text{where}
     \,\,\,\,
    \Omega_3^{\pm}(a) =
    \left(
    \begin{array}{cc}
  a  &\frac{1}{2} 
  \cr
\pm(a-1) &\pm \frac{1}{2} 
\end{array} \right)
    .
  \end{equation}
  In this case, one has
      \begin{equation}\label{lat192}
   \alpha(\Omega_3^{\pm}(a)) = {2-2a}
   \ge 1 \ge 
   \alpha^*(\Omega_3^{\pm}(a)) 
   =
2{a} 
    \end{equation}
and 
      \begin{equation}\label{lat1920}
   \alpha(\Omega_3^{\pm}(a)) +
   \alpha^*(\Omega_3^{\pm}(a)) 
   =
2.
    \end{equation}
    It is important for us that for a special value of parameter  $a=0$, critical lattices 
    $\Lambda_{3}^{+}(0)
    =\Lambda_{3}^{-}(0)
    $
    coincide and have
    eight points on the boundary  of $\mathcal{B}_1$,
    while for $a\neq0$ we have six points. All the possible situations are visualized in Fig. \ref{fff3}.

  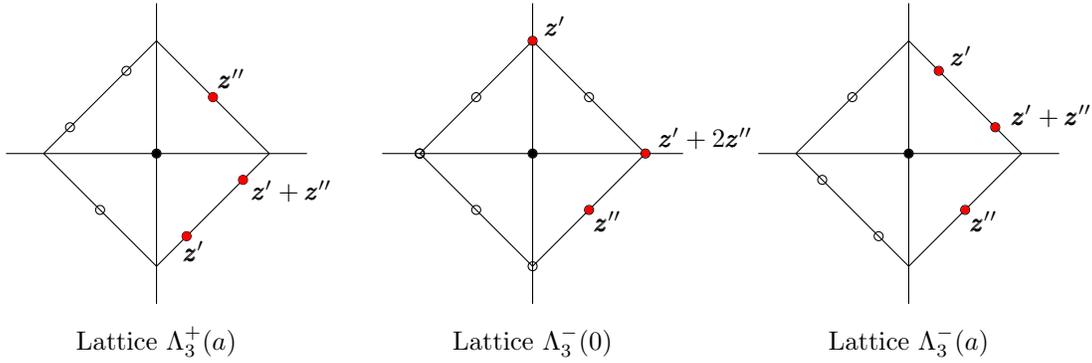
\begin{figure}[h]
  \centering
  \begin{tikzpicture}

             \node[draw=black, circle,inner sep=1.2pt]  at (0,2) {};
     \node[fill=black,circle,inner sep=1.1pt]   at (0,2) {};

             \draw (-2,2) -- (2,2);
               \draw (0,0) -- (0,4);
                 \draw (0,3.5) -- (1.5,2);
                   \draw (0,3.5) -- (-1.5,2);
                   \draw (0,0.5) -- (1.5,2);
                   \draw (0,0.5) -- (-1.5,2);

             \node[draw=black, circle,inner sep=1.2pt]  at (0,3.5) {};
     \node[fill=red,circle,inner sep=1.1pt]   at (0,3.5) {};
     \node[draw=black, circle,inner sep=1.2pt]  at (1.5,2) {};
     \node[fill=red,circle,inner sep=1.1pt]   at (1.5,2) {};
         \node[draw=black, circle,inner sep=1.2pt]  at (0.75,2.75) {};
     
             \node[draw=black, circle,inner sep=1.2pt]  at (0.75,1.25) {};
     \node[fill=red,circle,inner sep=1.1pt]   at (0.75,1.25) {};
     
     \node[draw=black, circle,inner sep=1.2pt]  at (0,0.5) {};
     
         \node[draw=black, circle,inner sep=1.2pt]  at (-0.75,1.25) {};
          \node[draw=black, circle,inner sep=1.2pt]  at (-0.75,2.75) {};
         
               \node[draw=black, circle,inner sep=1.2pt]  at (-1.5,2) {};
 
   \draw (0,-0.5)  node {Lattice $\Lambda_3^{-}(0)$};

             \node[draw=black, circle,inner sep=1.2pt]  at (5,2) {};
     \node[fill=black,circle,inner sep=1.1pt]   at (5,2) {};

             \draw (3,2) -- (7,2);
               \draw (5,0) -- (5,4);
                 \draw (5,3.5) -- (6.5,2);
                   \draw (5,3.5) -- (3.5,2);
                   \draw (5,0.5) -- (6.5,2);
                   \draw (5,0.5) -- (3.5,2);

             \node[draw=black, circle,inner sep=1.2pt]  at (5.4,3.1) {};
     \node[fill=red,circle,inner sep=1.1pt]   at (5.4,3.1) {};
  
         \node[draw=black, circle,inner sep=1.2pt]  at (6.15,2.35) {};
          \node[fill=red,circle,inner sep=1.1pt]   at (6.15,2.35){};
     
             \node[draw=black, circle,inner sep=1.2pt]  at (5.75,1.25) {};
     \node[fill=red,circle,inner sep=1.1pt]   at (5.75,1.25) {};
     
     \node[draw=black, circle,inner sep=1.2pt]  at (4.6,0.9) {};
     
         \node[draw=black, circle,inner sep=1.2pt]  at (3.85,1.65) {};
          \node[draw=black, circle,inner sep=1.2pt]  at (4.25,2.75) {};
         
               \node[draw=black, circle,inner sep=1.2pt]  at (-1.5,2) {};

      \draw (5,-0.5)  node {Lattice $\Lambda_3^{-}(a)$};

     \draw(5.7,3.3)  node {$\pmb{z}'$}; 
     
       \draw(6,1.1)  node {$\pmb{z}''$};

             \node[draw=black, circle,inner sep=1.2pt]  at (-5,2) {};
     \node[fill=black,circle,inner sep=1.1pt]   at (-5,2) {};

             \draw (-3,2) -- (-7,2);
               \draw (-5,0) -- (-5,4);
                 \draw (-5,3.5) -- (-6.5,2);
                   \draw (-5,3.5) -- (-3.5,2);
                   \draw (-5,0.5) -- (-6.5,2);
                   \draw (-5,0.5) -- (-3.5,2);

             \node[draw=black, circle,inner sep=1.2pt]  at (-5.4,3.1) {};

         \node[draw=black, circle,inner sep=1.2pt]  at (-6.15,2.35) {};

             \node[draw=black, circle,inner sep=1.2pt]  at (-5.75,1.25) {};

     \node[draw=black, circle,inner sep=1.2pt]  at (-4.6,0.9) {};
     \node[fill=red, circle,inner sep=1.2pt]  at (-4.6,0.9) {};
     
         \node[draw=black, circle,inner sep=1.2pt]  at (-3.85,1.65) {};
           \node[fill=red, circle,inner sep=1.2pt]  at (-3.85,1.65) {};
         
          \node[draw=black, circle,inner sep=1.2pt]  at (-4.25,2.75) {};
              \node[fill=red, circle,inner sep=1.2pt]  at (-4.25,2.75) {};
         
               \node[draw=black, circle,inner sep=1.2pt]  at (-1.5,2) {};

      \draw (-5,-0.5)  node {Lattice $\Lambda_3^{+}(a)$};

     \draw(-4,3)  node {$\pmb{z}''$}; 
     
       \draw(-4.5,0.7)  node {$\pmb{z}'$};

    \draw(-3.2,1.5)  node {$\pmb{z}'+ \pmb{z}''$};

    \draw(6.9,2.5)  node {$\pmb{z}'+ \pmb{z}''$}; 
    
        \draw(2.3,2.2)  node {$\pmb{z}'+2 \pmb{z}''$};

          \draw(0.3,3.7)  node {$\pmb{z}'$};

     \draw(1,1.1)  node {$\pmb{z}''$}; 
   
  \end{tikzpicture}
  
   \caption{Critical lattices for $p=1$ } \label{fff3}
\end{figure}
    
    For  both lattices $\Lambda_3^{\pm}(a)$ with $a\neq 0$,  points $\pm ( \pmb{z}''+\pmb{z}')$ belong to the boundary of  $\mathcal{B}_p$.
    For the lattices $\Lambda_3^{\pm}(0)$  not only points 
    $\pm ( \pmb{z}''+\pmb{z}')$ but also points $\pm ( \pmb{z}''+2\pmb{z}') = (\pm 1, 0)$
    belong to the boundary.  The situation is visualized in Fig. \ref{fff3}.

               \vskip+0.3cm
   {\bf 5.5. Case $ p=2$.}
In this case, we describe the critical locus $\frak{L}_{2}$ by the following parametrization of its elements. For 
$\varphi \in \left[0, \frac{\pi}{6}\right] $
and 
$
u = \sin\varphi  \in \left[0,\frac{1}{2}\right]$, consider the lattices
\begin{equation}\label{lat20}
     \Lambda_{4}^{\pm}(\varphi ) =  \Omega_4^{\pm} (\varphi )\! \cdot\! \mathbb{Z}^2, \,\,
    \Omega_4^{\pm}(\varphi ) =
    \left( 
    \begin{array}{cc}
  \sin\varphi&\cos \left(\frac{\pi}{6}+ \varphi\right)
  \cr
\pm \cos \varphi &\mp \sin \left(\frac{\pi}{6}+ \varphi\right) 
\end{array} \right)
=
 \left( 
    \begin{array}{cc}
  u&\frac{\sqrt{3-3u^2}-u}{2}
  \cr
\pm\sqrt{1-u^2}&
\mp\frac{\sqrt{1-u^2}+u\sqrt{3}}{2}
 \end{array} \right)
.
\end{equation}
Now put
$$
\omega
= \frac{u\sqrt{3}}{\sqrt{1-u^2}} = \sqrt{3}\,\tan \varphi \in [0,1].
$$ 
Then 
\begin{equation}\label{lat199}
2\ge \alpha (\Omega_4^{\pm}(\varphi )) =\frac{2}{1+\omega}\ge 1\ge 
\alpha^* (\Omega_4^{\pm}(\varphi )) =\frac{2\omega}{3-\omega}.
\end{equation}
So if we write $$\beta(\Omega_4^{\pm}(\varphi ))=  \frac{1}{\alpha (\Omega_4^{\pm}(\varphi )) -1}-1 , \quad\quad\beta^*(\Omega_4^{\pm}(\varphi ))=\frac{1}{\alpha^* (\Omega_4^{\pm}(\varphi ))}-1,$$
then we get the identity
\begin{equation}\label{lat200}
\beta(\Omega_4^{\pm}(\varphi ))\cdot\beta^*(\Omega_4^{\pm}(\varphi )) = 3.
\end{equation}

 We do not put here a special figure devoted to the critical lattices in the case $p=2$. 
 We would like to mention that for $\varphi =0$ the situation is similar to the one from the left picture in Fig. \ref{fff1}, and that for 
$\varphi =\frac{\pi}{6}$, the situation is similar to the one from the right picture in Fig. \ref{fff1}
 (of course, the values of coordinates of the points will differ from  $\frac{1}{2}$), while for $0< \varphi < \frac{\pi}{6}$ we can refer to Fig. \ref{fff2} (the picture will have the same structure, although the angles will be different).

        \vskip+0.3cm

     \section{  Proof of Theorem \ref{thm:charachterization}}\label{ffiii}

         \vskip+0.3cm

          First, we show that in all statements ({\bf a}) - ({\bf d}) of Theorem \ref{thm:charachterization}, if $\alpha$ is $F$-Dirichlet non-improvable, then its continued fraction expansion contains corresponding patterns.
          For this purpose, we  use Proposition \ref{proposition:1} and the following property of lattices 
 $\Lambda_\alpha (t)$, which is an important tool for our consideration.
 
   \vskip+0.3cm
\begin{lemma}\label{lemma:1:points}
Suppose that two points $\frak{z}'= (x', y') , \frak{z}''=(x'', y'')\in \Lambda_\alpha (t),\, t>1$ satisfy the conditions
$$
0< x'< x'',\,\,\,\,\,\,\,\,
|y''|<|y'|
$$
and in addition, in the rectangle
$$
R(x'', y') =
\{ (x,y)\in \mathbb{R}^2:\,\,\, 0\le x\le x'', |y|\le |y'|
\}
$$
there are no points of  $\Lambda_\alpha (t)$  different from
$\pmb{0},  \frak{z}' , \frak{z}''$. Then there exists $\nu$ such that 
$$
\frak{z}'= \frak{z}_\nu,\,\,\,\, \frak{z}''= \frak{z}_{\nu+1}.
$$
\end{lemma}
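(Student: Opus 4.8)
The statement is, at heart, a geometric reformulation of the classical fact that the convergent denominators of $\alpha$ are precisely the best approximation denominators (equivalently, every best approximation of the second kind is a convergent). Since $\Lambda_\alpha(t)=G_t\Lambda_\alpha(1)$ with $G_t=\mathrm{diag}(t^{-1},t)$ diagonal, applying $G_t^{-1}$ preserves positivity of the first coordinate, the inequality $|y''|<|y'|$, the inequality $x'<x''$, and carries axis-parallel rectangles to axis-parallel rectangles; so I would first reduce to $t=1$, in which case $\frak{z}'=(q',p'-q'\alpha)$ and $\frak{z}''=(q'',p''-q''\alpha)$ for integers with $0<q'<q''$, and for general $t$ one reads the conclusion with $\frak{z}_\nu$ replaced by $G_t\frak{z}_\nu$. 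The whole argument then proceeds by the same device: whenever the desired conclusion fails at some step, one exhibits a lattice point of $\Lambda_\alpha(1)$ inside $R(x'',y')=[0,q'']\times[-|y'|,|y'|]$ other than $\pmb 0,\frak{z}',\frak{z}''$, contradicting the hypothesis.

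Concretely, I would first show $|y'|=\|q'\alpha\|$ and $|y''|=\|q''\alpha\|$: if $p'$ were not the integer $\bar p'$ nearest to $q'\alpha$, the lattice point $(q',\bar p'-q'\alpha)$ has first coordinate $q'<q''$ and second coordinate of absolute value $\|q'\alpha\|<|y'|$, so it lies in $R(x'',y')$, is nonzero, and (sharing no first coordinate with $\frak{z}''$ and differing in the second coordinate from $\frak{z}'$) is forbidden — contradiction; the same argument applies at $q''$. Next, for every integer $q_1$ with $1\le q_1<q''$ and $q_1\neq q'$, the point $(q_1,\bar p_1-q_1\alpha)$ (with $\bar p_1$ nearest to $q_1\alpha$) would be a forbidden point of $R(x'',y')$ unless $\|q_1\alpha\|>|y'|$; hence $\|q_1\alpha\|>|y'|=\|q'\alpha\|$ for all such $q_1$. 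In particular $\|q'\alpha\|<\|q_1\alpha\|$ for all $1\le q_1<q'$, and $\|q''\alpha\|\le|y''|<|y'|=\|q'\alpha\|<\|q_1\alpha\|$ for all $1\le q_1<q''$ with $q_1\neq q'$ (the case $q_1=q'$ being immediate), so both $p'/q'$ and $p''/q''$ are best approximations of the second kind, hence convergents: $q'=q_\nu$, $q''=q_\mu$ with $\nu<\mu$.

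It remains to see $\mu=\nu+1$. If $\mu\ge\nu+2$ then $q_\nu<q_{\nu+1}<q_\mu=q''$ and $\xi_{\nu+1}=\|q_{\nu+1}\alpha\|<\xi_\nu=\|q_\nu\alpha\|=|y'|$, so $\frak{z}_{\nu+1}=(q_{\nu+1},p_{\nu+1}-q_{\nu+1}\alpha)$ lies in $R(x'',y')$, is nonzero, and has first coordinate strictly between those of $\frak{z}'=\frak{z}_\nu$ and $\frak{z}''=\frak{z}_\mu$, so it is forbidden — contradiction. Hence $\frak{z}'=\frak{z}_\nu$ and $\frak{z}''=\frak{z}_{\nu+1}$.

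The only delicate point I anticipate is bookkeeping at small indices (e.g. $q'=q_0=1$, or the coincidence $q_0=q_1$ when $a_1=1$): these are automatically excluded by the identity $|y'|=\|q'\alpha\|$ proved in the first step, which forces $p'$ to be the nearest integer and therefore, when $q'=1$, forces $a_1\ge 2$ and hence $q_{\nu+1}>q_\nu$ as used above. I expect the main thing to be careful about is precisely matching the hypotheses to the classical ``best approximation of the second kind $\Rightarrow$ convergent'' statement, and checking in each appeal to the hypothesis that the exhibited lattice point genuinely differs from both $\frak{z}'$ and $\frak{z}''$; the rest is routine.
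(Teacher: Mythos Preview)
Your proof is correct and takes precisely the paper's approach: the paper's own proof is the single sentence ``This lemma is an immediate corollary of Lagrange's theorem that convergents of continued fraction to $\alpha$ correspond to best approximations to $\alpha$,'' and your argument is simply a careful unpacking of that citation, showing that the rectangle hypothesis forces $\frak{z}'$ and $\frak{z}''$ to be consecutive best-approximation vectors. Your edge-case remark at the end is slightly garbled (when $q'=1$ and $a_1=1$ one obtains $\nu=1$ rather than forcing $a_1\ge 2$, since the nearest integer to $\alpha$ is then $p_1=a_0+1$), but the needed inequality $q_{\nu+1}>q_\nu$ still holds and the proof is unaffected.
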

 \vskip+0.3cm
 
\begin{proof}
     This lemma is an immediate corollary of Lagrange's theorem that convergents of a continued fraction to $\alpha$ correspond to best approximations to $\alpha$.
     \end{proof}
 
  \vskip+0.3cm

 We can assume that $\alpha$ is irrational, as for any norm $F$, we have $\Q\subset\mathbf{DI}_F$.
 
 Let us prove the necessity condition in the statement ({\bf a}). From Proposition \ref{proposition:1} we see that either there is a subsequence of lattices $ \Lambda_\alpha (t)$ which converges to $ \overline{\Lambda}_1$
  or a subsequence of lattices $ \Lambda_\alpha (t)$ which converges to $ \overline{\Lambda}_1'
  .$
  
  Suppose that the lattice $ \Lambda_\alpha (t)$ is close enough to $ \overline{\Lambda}_1$ or to $ \overline{\Lambda}_1'$.
  It means that we have fixed some large $R$ and taken a small $\varepsilon$ such that 
  close to any point $ \pmb{z}\in  \overline{\Lambda}_1$ or to $ \overline{\Lambda}_1'$ with $| \pmb{z}|\le R$  
  there exist just one point $\frak{z}\in \Lambda_\alpha (t)$ with 
  $| \pmb{z}- \frak{z}|<\varepsilon$. In particular, there are points 
$\frak{z}' =(x',y'), \frak{z}''=(x'',y'') \in \Lambda_\alpha (t)$  which are close to the points $\pmb{z}',\pmb{z}''$ from Section \ref{sec:lpnorms:param} which correspond to $\Lambda_1$ or $\Lambda_1'$.
  We may assume that 
\begin{equation}\label{dopp}
\left|
\frac{x'}{x''} - \alpha^*(\Omega_1)
\right| =
\left|
\frac{x'}{x''} - 1
\right| <\varepsilon_1,\,\,\,\,\,
\left|
\frac{|y'|}{|y''|} - \alpha(\Omega_1)
\right| =
\left|
\frac{|y'|}{|y''|} - 1
\right| <\varepsilon_1
\end{equation}
{or}
$$
\left|
\frac{x'}{x''} - \alpha^*(\Omega_1')
\right| =
\left|
\frac{x'}{x''} - 1
\right| <\varepsilon_1,\,\,\,\,\,
\left|
\frac{|y'|}{|y''|} - \alpha(\Omega_1')
\right| =
\left|
\frac{|y'|}{|y''|} - 1
\right| <\varepsilon_1
$$
for small positive $\varepsilon_1$.
However, we need more detailed information to deal with the statement ({\bf a}).
  
  Suppose that  lattice $ \Lambda_\alpha (t)$ is close enough  just to $ \overline{\Lambda}_1$.
Consider points $ \frak{z}' =(x',y'),\frak{z}'' =(x'',y'')\in \Lambda_\alpha (t)$  which are close to  points 
$\pmb{z}' =\left(x_0, \frac{1}{2}\right),\pmb{z}'' =\left(x_0, -\frac{1}{2}\right)\in \Lambda_1$ correspondingly.
 As $x'\neq x''$ (otherwise $|y'-y''|\ge t$ which is not possible for large $t$), either $ x'<x''$ or $ x''<x'$. Without loss of generality, we may assume that  $ x'<x''$. As $\alpha$ is irrational, we have $|y'|\neq|y''|$, so we need to consider two cases: either $ |y''|<|y'|$ ({\bf case 1$^0$}) or 
$ |y'|<|y''|$ ({\bf case 2$^0$}).
We should note that in the strip $\mathcal{S}=\{ (x,y): 0< x <x_0\}$ there are no points of  $\Lambda_1$.

In {\bf case 1$^0$} we have $ |y''|<|y'|$. As all the points of $\Lambda_\alpha (t)$ are located in the neighbourhoods of points of $\Lambda_1$  and the strip $\mathcal{S}$ is free of points of $\Lambda_1$, we conclude that the rectangle $R(x'', y')$ contains only three points of  lattice 
$
\Lambda_\alpha (t)$  - points $\pmb{0},  \frak{z}'$ and  $\frak{z}''$. By Lemma \ref{lemma:1:points} there 
exists $\nu$ such that 
\begin{equation}\label{nuod}
\frak{z}'= \frak{z}_\nu,\,\,\,\, \frak{z}''= \frak{z}_{\nu+1}.
\end{equation}
Now
\begin{equation}\label{fdof}
\frac{x'}{x''} = \frac{q_\nu}{q_{\nu+1}}  \in  (1-\varepsilon_1, 1)\,\,\,\,
\text{and}\,\,\,\,
\frac{y'}{y''} =\frac{\xi_\nu}{\xi_{\nu+1}} \in (1, 1+\varepsilon_1)
,
\end{equation}
where $\varepsilon_1>0$ is small when $\varepsilon$ is small.
Formulae (\ref{fromu}) show that $ a_{\nu+1} = a_{\nu+2} = 1$, while  partial quotients 
$a_\nu$  and $a_{\nu+3}$ are large and tend to infinity as $\varepsilon$ tends to zero.

In {\bf case 2$^0$} we have $ |y''|>|y'|$. Here, we should consider the point 
$ \frak{z}''-\frak{z}' = (x''-x', y''-y') \in  \Lambda_\alpha (t)$ which is close either to the point
$(0,1)\in \Lambda_1$ or to $(0,-1)\in \Lambda_1$. Note that $ x''-x'>0$ and 
$y'', y'$ have different signs. Now, from the property of the empty strip $\mathcal{S}$, we see that the
rectangle 
$
R(x', |y''|+|y'|)
$ contains only three points $\pmb{0}, \frak{z}''-\frak{z}' ,\frak{z}' $ of the lattice 
$\Lambda_\alpha (t)$ which lie on its boundary. Hence by Lemma \ref{lemma:1:points}
$$
\exists \, \nu :\,\,\,\,\,\frak{z}''
-  \frak{z}' =\frak{z}_\nu,\,\,\,\,\,\,\frak{z}' = \frak{z}_{\nu+1}.
$$
Now
$$
\frac{x''-x'}{x'} = \frac{q_\nu}{q_{\nu+1}}  \in  (0,\varepsilon_1)\,\,\,\,
\text{and}\,\,\,\,
\frac{|y''-y'|}{|y''|} =\frac{\xi_\nu}{\xi_{\nu+1}} \in (2, 2+\varepsilon_1)
$$
with small positive $\varepsilon_1$ and by formulae (\ref{fromu})  we see that 
$a_{\nu+2} = 2$ while both $a_{\nu+1}$ and $a_{\nu+3}$ are large.

  \vskip+0.3cm

  Suppose that  lattice $ \Lambda_\alpha (t)$ is close enough to $ \overline{\Lambda}_1'$.
  Again we consider 
  points $ \frak{z}' =(x',y'),\frak{z}'' =(x'',y'')\in \Lambda_\alpha (t)$  which are close to  points 
$\pmb{z}' =\left( \frac{1}{2}, y_0\right),\pmb{z}'' =\left(\frac{1}{2}, - y_0\right)\in \Lambda_1, y_0>0$ correspondingly. We should note that the only points of $ \overline{\Lambda}_1'$ which lie in rectangle
$\mathcal{R} = R(1,y_0)$ are points 
\begin{equation}\label{four}
\pmb{0}, \pmb{z}', \pmb{z}''\,\,\,\text{ and}\,\,\, \pmb{z}'+\pmb{z}''.
\end{equation}
By discreteness of $ \overline{\Lambda}_1'$ we see that for small $\varepsilon$ in the 
$\varepsilon$-neighbourhood of $\mathcal{R}$  the only points of 
   $ \overline{\Lambda}_1'$ in this neighbourhood will be the same four points (\ref{four}).
   Suppose that $x'<x''$ and consider two cases:
   $ |y''|<|y'|$ ({\bf case 1$^0$}) or 
$ |y'|<|y''|$ ({\bf case 2$^0$}).

In {\bf case} 1$^0$ we have that the rectangle $R(x'', y')$ contains only three points of  lattice 
$
\Lambda_\alpha (t)$  - points $\pmb{0},  \frak{z}'$ and  $\frak{z}''$. So Lemma \ref{lemma:1:points} gives $\nu$ with (\ref{nuod}) and
 formulae 
  (\ref{fromu}) analogously to (\ref{fdof})  show 
   that $ a_{\nu+1} = a_{\nu+2} = 1$, while  partial quotients 
$a_\nu$  and $a_{\nu+3}$ are large.
  
  In {\bf case} 2$^0$ we see that the rectangle
  $R(x'+x'', |y'|)\cap \Lambda_\alpha (t) = \{ \pmb{0},\frak{z}',\frak{z}'+\frak{z}''\}$,
  and the last three points lie on the boundary of $R(x'+x'', |y'|)$. By Lemma \ref{lemma:1:points} this means that
 $$
\exists \, \nu :\,\,\,\,\,\frak{z}'=\frak{z}_\nu,\,\,\,\,\,\,\frak{z}'+\frak{z}'' = \frak{z}_{\nu+1}.
$$
Now 
$\frac{|y'|}{|y'+y''|}=
\frac{\xi_\nu}{\xi_{\nu+1}}
$
is very large, while 
$
\frac{x'+x''}{x'}=
\frac{q_{\nu+1}}{q_\nu} \in (2, 2+\varepsilon_1)$.
By formulae  (\ref{fromu}) this means that $a_{\nu+1} = 2$, while 
$a_\nu$ and $a_{\nu+2}$ are very large.

In all cases, we got patterns (\ref{trip0}). Hence, the necessity for statement ({\bf a})  is proven.

  \vskip+0.3cm

  Now we prove the necessity condition in the statement ({\bf b}). The situation here is simpler than in the statement  
   ({\bf a}). Let a subsequence  of lattices  $\Lambda_\alpha(t)$ from Proposition \ref{proposition:1} converges to $\overline{\Lambda}_2^+$ or $\overline{\Lambda}_2^-$. We take points $\frak{z}' =(x',y'),\, \frak{z}'' =(x'',y'')\in \Lambda_\alpha(t)$
   which are close to $ \pmb{z}', \pmb{z}''$ in  $\overline{\Lambda}_2^+$ or $\overline{\Lambda}_2^-$. Then
   analogously to (\ref{dopp})  by (\ref{lat12}) we have
   $$
   \left|
\frac{x'}{x''} - \alpha^*(\Omega_2^\pm)
\right| =
\left|
\frac{x'}{x''} - 
   \varsigma_p
\right| <\varepsilon_1,\,\,\,\,\,
\left|
\frac{|y'|}{|y''|} - \alpha(\Omega_2^\pm)
\right| =
\left|
\frac{|y'|}{|y''|} - (1+\varsigma_p)
\right| <\varepsilon_1.
$$ 
But in this case $1+\varsigma_p >\varsigma_p$.
So for $\Lambda_2^\pm$ we have $x''>x', |y''|< |y'|$ and   for $\varepsilon_1$ small enough in the rectangle
$ R(x'', |y'|)$ there are no other points of $\Lambda_\alpha (t)$ but $\pmb{0},\pmb{z}',\pmb{z}''$.
This means that by Lemma \ref{lemma:1:points} we get the existence of $\nu$ with 
$$
   \frak{z}' =\frak{z}_\nu,\,\,\,\,\,\,\frak{z}'' = \frak{z}_{\nu+1} .
   $$
   We take into account that irrational numbers have a unique continued fraction expansion, meanwhile, for rational  numbers, there exist exactly two expansions
   (with last digit equal to $1$ and with last digit equal at least $2$).
   Now formulae (\ref{fromu})  immediately give the desired patterns. Thus, the necessity in the statement ({\bf b}) is proven.
   
   \vskip+0.3cm
   
   As for the necessity in the statement ({\bf c}), first, consider the cases when $\Lambda_\alpha (t)$ converges to a lattice of the form $\overline{\Lambda}_3^\pm (a)$ with $a \neq 0$. Again, we consider points 
   $ \frak{z}' = (x',y'), \frak{z}''=(x'',y'')\in \Lambda_\alpha (t)$ close to $\pmb{z}',\pmb{z}''$.
   Then  again $R(x'',|y'|)$  does not contain points of $\Lambda_\alpha (t)$ different from $ \pmb{0},\pmb{z}',\pmb{z}''$ and by Lemma \ref{lemma:1:points}
   $\frak{z}' = \frak{z}_\nu, \frak{z}''= \frak{z}_{\nu+1}$ with some $\nu$. But  in this case 
   we also have $ R(x'+x'', |y''|) \cap \Lambda_\alpha (t) = \{ 0,\frak{z}_{\nu+1}, \frak{z}_\nu + \frak{z}_{\nu+1}\}$.
So  $ \frak{z}_{\nu+2} = \frak{z}_{\nu}+\frak{z}_{\nu+1}$
   and $ a_{\nu+2} = 1$.

Now in case ({\bf c})  by (\ref{lat1920}) and formulae (\ref{fromu})  we see that $\alpha_{\nu+1}^*+\alpha_{\nu+2}$ is close to $2$. 
   If $a_{\nu +3}  >1 $, then
   $\alpha_{\nu+2} = [1;a_{\nu+3}, a_{\nu+4},\ldots]$ and 
  $ 2-\alpha_{\nu +2} = [0;1, a_{\nu+3}-1, a_{\nu+4},\ldots]$.
  Assume that the limits are irrational.
  Then by the uniqueness of continued fractions' expansion, we get the condition with the first patterns from (\ref{asym}) with $ b_1= a_{\nu+2}-1, b_2 = a_{\nu+3},\ldots$.
    If $a_{\nu+2}  =1 $, then we get the second pattern from (\ref{asym}).
    If there is no limit lattice   $\overline{\Lambda}_3^\pm (a)$ with  irrational value of $\alpha(\Omega_3^\pm)$, then there exists a particular limit lattice 
    $\overline{\Lambda}_3^\pm (a)$ with rational value of $\alpha(\Omega_3^\pm)$.
    Then the limits are (both) rational and we get the other eight patterns from the formulation of the statement ({\bf c}).

   \vskip+0.3cm
   In the statement ({\bf c}), it remains to consider the case when a subsequence of lattices $\Lambda_\alpha (t)$ converges to $\overline{\Lambda}_3^+(0)$.
   The situation here is similar to the one in statement  ({\bf a}). We may suppose that $\frak{z}' = (x',y'), \frak{z}''=(x'',y'')\in \Lambda_\alpha (t)$ are close to 
   points $ \pmb{z}' +\pmb{z}'', \pmb{z}'' \in \overline{\Lambda}_3^+(0)$.
   Without loss of generality, we assume that $ x'<x''$. Then if $ |y'|>|y''|$ we come to the pattern $x,1,1,y$, and if 
   $ |y'|<|y''|$ we get the pattern $x,2,y$ with large $x$ and $y$.
   Thus, we get the necessity in the statement  ({\bf c}).

     \vskip+0.3cm
     Finally, we deal with the necessity in the statement ({\bf d}). Here we have two different situations. When there is a subsequence of lattices $\Lambda_\alpha (t)$ which converges to $\Lambda_4^\pm(\varphi)$ with $\varphi = 0 $ or $\varphi = \frac{\pi}{6}$ the situation is similar to the one in statement ({\bf a}). We consider two cases and conclude that in continued fraction expansion for $\alpha$, there should be patterns of the type $ x,1,1,y$ or $ x,2,y$ with large $x $ and $y$.
     In the case when $ \varphi  \in\left(0,\frac{\pi}{6}\right)$ we conclude that $\Lambda_\alpha (t) \cap  R(x'',|y'|) = \{ \pmb{0},\frak{z}',\frak{z}''\}$
     (here again $ \frak{z}',\frak{z}''\in \Lambda_\alpha(t)$ are points close to $\pmb{z}',\pmb{z}''\in \Lambda_4^\pm(\varphi)$). By Lemma \ref{lemma:1:points} and formulae (\ref{fromu}) we see that $\alpha_{\nu+1}^* $ and $\alpha_{\nu+2}$ should be close to 
     $\alpha^*(\Omega_4^\pm(\varphi))$ and  $\alpha(\Omega_4^\pm(\varphi))$  respectively. Equation (\ref{lat200}) leads to the patterns \eqref{asym1}, satisfying \eqref{rararar} from  Theorem \ref{thm:charachterization}.
      \vskip+0.3cm
      
      The sufficiency in all the cases of Theorem \ref{thm:charachterization} is clear, because if we have $\alpha$ with desired patterns in continued fraction expansion, each pattern
      gives a value of $t$ such that the corresponding points $ \frak{z}_\nu, \frak{z}_{\nu+1} \in \Lambda_\alpha (t) $ are close to the points $\pmb{z}',\pmb{z}''$ of
      critical lattice and so  the trajectory 
      $\{ \Lambda_\alpha (t) \}_{t\ge 1}$ has a limit point in $\overline{\frak{L}}_{p}$.$\hfill\qed$

\section{ More basic facts from continued fraction theory}\label{sec:basic:cf}
In this section, we provide the necessary basic facts from the theory of continued fractions, which we use in the upcoming sections. 

Recall that the $n$th convergent of $x$ is defined as
\begin{align*}
\frac{p_{n}\left(x\right)}{q_{n}\left(x\right)}=\left[a_{1}\left(x\right),\ldots,a_{n}\left(x\right)\right].
\end{align*}
For simplicity, we will denote $p_n=p_{n}\left(a_{1},\ldots,a_{n}\right)$, $q_n=q_{n}\left(a_{1},\ldots,a_{n}\right)$ when there is no ambiguity. Both $p_n$ and $q_n$ satisfy the iterative formula, for any $k\ge 2$ we have
\begin{equation}\label{rule}
\begin{split}
p_{k}&=a_{k}p_{k-1}+p_{k-2},\\
q_{k}&=a_{k}q_{k-1}+q_{k-2},
\end{split}
\end{equation}
where $p_{0}=0,p_{1}=1$ and $q_{0}=1,q_{1}=a_1$.

We will sometimes use a notation $\langle a_1,\ldots,a_n\rangle := q_n(a_1,\ldots,a_n)$. For different values of $n\in\N$, this function $\langle a_1,\ldots,a_n\rangle : \N^n \to \N$ depends on $n$ variables $a_1,\ldots,a_n$ and is called {\it a continuant}. This is a strictly growing function with respect to any of its variables. Moreover, it is a strictly decreasing function upon deleting one of its variables. To be precise, for any $n$ and any $(a_1,\ldots,a_n)\in\N^n$, we have
$$
q_n(a_1,\ldots,a_{i-1},a_i,a_{i+1},\ldots,a_n) \ge q_{n-1}(a_1,\ldots,a_{i-1},a_{i+1},\ldots,a_n) \text{ for any } i\in\{1,\ldots,n\}.
$$

For $n\ge1$ and any $\left(a_{1},\ldots,a_{n}\right)\in\N^n$, where $a_{i}\in\mathbb{N}, 1\le i\le n$, we define $\textit{a cylinder of order }n$, denoted by $I_{n}\left(a_{1},\ldots,a_{n}\right)$ as
\begin{align*}
I_{n}\left(a_{1},\ldots,a_{n}\right):=\left\{x\in\left[0,1\right):a_{1}\left(x\right)=a_{1},\ldots,a_{n}\left(x\right)=a_{n}\right\}.
\end{align*}
We have the following standard facts about cylinders.
\begin{proposition}[{\protect\cite{Khinchin_book}}]\label{range}
For any $n\ge 1$ and $\left(a_{1},\ldots,a_{n}\right)\in\mathbb{N}^{n}$, $p_{k},q_{k}$ are defined recursively by $\left(\ref{rule}\right)$ where $0\le k\le n$, we have
\begin{equation}
I_{n}\left(a_{1},\ldots,a_{n}\right)=\left\{
  \begin{array}{ll}
   \left[\dfrac{p_{n}}{q_{n}},\dfrac{p_{n}+p_{n-1}}{q_{n}+q_{n-1}}\right) &{\rm if  }
   \,n\textrm{ is even},\\
   \left(\dfrac{p_{n}+p_{n-1}}{q_{n}+q_{n-1}},\dfrac{p_{n}}{q_{n}}\right] &{\rm if    }\,n\textrm{ is odd}.\\
  \end{array}
\right.
\end{equation} 
Therefore, the length of a cylinder is given by
\begin{equation}\label{length}
\left|I_{n}\left(a_{1},\ldots,a_{n}\right)\right|=\dfrac{1}{q_{n}\left(q_{n}+q_{n-1}\right)}.
\end{equation}
\end{proposition}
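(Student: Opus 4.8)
This is classical, and the plan is to reduce the whole statement to one determinant identity for continuants together with the monotonicity of a single linear-fractional map. First I would record that
$$
p_{k}q_{k-1}-p_{k-1}q_{k}=(-1)^{k-1}\qquad(k\ge 1),
$$
which follows from (\ref{rule}) by a one-line induction on $k$: for $k=1$ it reads $p_{1}q_{0}-p_{0}q_{1}=1$, and substituting the recursions for $p_{k},q_{k}$ gives $p_{k}q_{k-1}-p_{k-1}q_{k}=-(p_{k-1}q_{k-2}-p_{k-2}q_{k-1})$.

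Next I would identify the cylinder with the image of a parameter interval. The claim is that $x\in I_{n}(a_{1},\ldots,a_{n})$ if and only if
$$
x=g(t):=\frac{p_{n}+t\,p_{n-1}}{q_{n}+t\,q_{n-1}}\qquad\text{for some }t\in[0,1).
$$
Indeed, an $x$ lying in the cylinder is exactly one that can be written as $[a_{1},\ldots,a_{n},\theta]$ with "tail value" $\theta\in[1,\infty]$, i.e. $x=\dfrac{p_{n}\theta+p_{n-1}}{q_{n}\theta+q_{n-1}}$; setting $t=1/\theta$ gives $x=g(t)$. The value $t=0$ (that is $\theta=\infty$) corresponds to the terminating case $x=[a_{1},\ldots,a_{n}]=p_{n}/q_{n}$, which does lie in the cylinder, whereas $t=1$ (that is $\theta=1$) must be excluded, since $[a_{1},\ldots,a_{n},1]=[a_{1},\ldots,a_{n-1},a_{n}+1]$ has $n$-th partial quotient $a_{n}+1\neq a_{n}$. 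Since $g$ is a Möbius transformation it is injective on $[0,1)$, so this correspondence is a bijection. I expect this endpoint bookkeeping (together with the harmless non-uniqueness of finite continued fractions) to be the only delicate point of the argument, and it is exactly what produces the half-open pattern in the statement.

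Finally, differentiating $g$ and invoking the first step,
$$
g'(t)=\frac{p_{n-1}q_{n}-p_{n}q_{n-1}}{(q_{n}+t\,q_{n-1})^{2}}=\frac{(-1)^{n}}{(q_{n}+t\,q_{n-1})^{2}},
$$
so $g$ is strictly increasing on $[0,1)$ when $n$ is even and strictly decreasing when $n$ is odd. As $g(0)=p_{n}/q_{n}$ and $g(t)\to (p_{n}+p_{n-1})/(q_{n}+q_{n-1})$ when $t\to 1^{-}$, this gives precisely the two displayed forms of $I_{n}(a_{1},\ldots,a_{n})$ according to the parity of $n$. The length formula then drops out:
$$
\bigl|I_{n}(a_{1},\ldots,a_{n})\bigr|=\left|\frac{p_{n}}{q_{n}}-\frac{p_{n}+p_{n-1}}{q_{n}+q_{n-1}}\right|=\frac{|p_{n}q_{n-1}-p_{n-1}q_{n}|}{q_{n}(q_{n}+q_{n-1})}=\frac{1}{q_{n}(q_{n}+q_{n-1})},
$$
using the identity of the first step once more. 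Everything beyond Step 1 and the parametrisation is purely formal; the analytic content amounts to monotonicity of a Möbius map.
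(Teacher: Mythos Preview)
Your argument is correct and is the standard one: parametrise the cylinder by the tail value via the Möbius map $t\mapsto (p_n+tp_{n-1})/(q_n+tq_{n-1})$, use the continuant identity $p_kq_{k-1}-p_{k-1}q_k=(-1)^{k-1}$ to read off the sign of the derivative, and handle the endpoints by the non-uniqueness $[a_1,\ldots,a_n,1]=[a_1,\ldots,a_n+1]$. The paper does not supply its own proof of this proposition at all; it is quoted from Khinchin's book as a known fact, so there is nothing to compare against beyond noting that your write-up is exactly the classical derivation.
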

For a better estimation, we need the following results. For any $n\ge 1$ and $\left(a_{1},\ldots,a_{n}\right)\in\mathbb{N}^{n}$, we have
\begin{lemma}[{\protect\cite{Khinchin_book}}] For any $1\le k\le n$, we have
\begin{equation}\label{c1}
\dfrac{a_{k}+1}{2}\le\dfrac{q_{n}\left(a_{1},\ldots,a_{n}\right)}{q_{n-1}\left(a_{1},\ldots,a_{k-1},a_{k+1},\ldots,a_{n}\right)}\le a_{k}+1.
\end{equation}
\end{lemma}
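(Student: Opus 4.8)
The plan is to reduce \eqref{c1} to one exact algebraic identity for continuants followed by two very short estimates. Introduce the four shorter continuants
\[
A=q_{k-1}(a_1,\dots,a_{k-1}),\quad A'=q_{k-2}(a_1,\dots,a_{k-2}),\quad B=q_{n-k}(a_{k+1},\dots,a_n),\quad B'=q_{n-k-1}(a_{k+2},\dots,a_n),
\]
with the conventions $q_0=1$, $q_{-1}=0$, so that the degenerate pairs occurring when $k=1$ or $k=n$ are simply $(1,0)$. Splitting the product (matrix) representation of continuants at the factor carrying $a_k$ — equivalently, applying the standard concatenation identity together with the recurrence \eqref{rule} — and reading off the relevant entry gives
\[
q_n(a_1,\dots,a_n)=a_k\,AB+A'B+AB',\qquad q_{n-1}(a_1,\dots,a_{k-1},a_{k+1},\dots,a_n)=AB+A'B'.
\]
The denominator in \eqref{c1} is exactly this second continuant, so the lemma becomes a statement about nonnegative reals with $A\ge A'\ge 0$ and $B\ge B'\ge 0$; the monotonicity $A\ge A'$, $B\ge B'$ is precisely the deletion inequality recalled above, applied to a boundary variable.

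For the upper bound I would use $(A-A')(B-B')\ge 0$, i.e.\ $A'B+AB'\le AB+A'B'$, to write
\[
q_n=a_k\,AB+(A'B+AB')\le a_k\,AB+(AB+A'B')=(a_k+1)AB+A'B'\le(a_k+1)(AB+A'B')=(a_k+1)\,q_{n-1}.
\]
For the lower bound I would first drop a term using $B\ge B'$, getting $q_n\ge a_k\,AB+A'B'+AB'$, and then invoke the elementary identity
\[
a_k\,AB+A'B'+AB'-\frac{a_k+1}{2}\,(AB+A'B')=\frac{a_k-1}{2}\,(AB-A'B')+AB',
\]
whose right-hand side is nonnegative because $a_k\ge 1$ and $AB\ge A'B'$; hence $q_n\ge\frac{a_k+1}{2}\,q_{n-1}$.

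I expect the only delicate point to be the bookkeeping at the extreme positions $k=1$ and $k=n$, where one of the pairs $(A,A')$, $(B,B')$ equals $(1,0)$. For $k=1$ the decomposition specializes to the familiar $q_n=a_1\,q_{n-1}(a_2,\dots,a_n)+q_{n-2}(a_3,\dots,a_n)$, so the estimates go through verbatim; the case $k=n$ then follows by the reversal symmetry $q_m(c_1,\dots,c_m)=q_m(c_m,\dots,c_1)$. With the conventions $q_0=1$, $q_{-1}=0$ no genuine case distinction is needed, but this is the spot where I would verify the identities carefully before committing to the write-up. This is, in essence, the classical argument of \cite{Khinchin_book}.
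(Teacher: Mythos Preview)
Your proof is correct. The continuant decomposition $q_n=a_kAB+A'B+AB'$ and $q_{n-1}(a_1,\dots,a_{k-1},a_{k+1},\dots,a_n)=AB+A'B'$ is exactly the consequence of Euler's concatenation identity, the monotonicity $A\ge A'$, $B\ge B'$ holds because each is an instance of the recurrence \eqref{rule}, and both estimates go through as written; the boundary cases $k=1$ and $k=n$ are indeed handled by the conventions $q_0=1$, $q_{-1}=0$, and the reversal symmetry is available but not actually needed.

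There is nothing to compare your argument against here: the paper does not supply a proof but simply quotes the inequality from \cite{Khinchin_book}. What you have written is the standard continuant derivation underlying that reference, so in that sense your approach \emph{is} the classical one the citation points to.
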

\begin{lemma}[{\protect\cite{Khinchin_book}}] For any $k\ge 1$, we have
\begin{equation}\label{c2}
\begin{split}
q_{n+k}\left(a_{1},\ldots,a_{n},a_{n+1},\ldots,a_{n+k}\right)&\ge q_{n}\left(a_{1},\ldots,a_{n}\right)\cdot q_{k}\left(a_{n+1},\ldots,a_{n+k}\right),\\
q_{n+k}\left(a_{1},\ldots,a_{n},a_{n+1},\ldots,a_{n+k}\right)&\le 2q_{n}\left(a_{1},\ldots,a_{n}\right)\cdot q_{k}\left(a_{n+1},\ldots,a_{n+k}\right).
\end{split}
\end{equation}
\end{lemma}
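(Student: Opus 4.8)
The plan is to prove both inequalities in \eqref{c2} simultaneously by induction on $k$, using only the recursion \eqref{rule} and the monotonicity property recorded just above the lemma (deleting a variable does not increase a continuant). I adopt the conventions $q_0=1$ for the empty continuant and $q_{-1}=0$, so that \eqref{rule} is valid for every index $\ge 1$; all continuants appearing below are nonnegative, so there are no sign issues to worry about.

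For the base case $k=1$, applying \eqref{rule} in the last variable gives
$$
q_{n+1}(a_1,\dots,a_{n+1}) = a_{n+1}\,q_n(a_1,\dots,a_n) + q_{n-1}(a_1,\dots,a_{n-1}).
$$
Since $q_{n-1}(a_1,\dots,a_{n-1})\ge 0$ and $q_1(a_{n+1})=a_{n+1}$, the lower bound $q_{n+1}\ge q_n\,q_1(a_{n+1})$ is immediate; and the monotonicity property gives $q_{n-1}(a_1,\dots,a_{n-1})\le q_n(a_1,\dots,a_n)$, hence
$$
q_{n+1}(a_1,\dots,a_{n+1}) \le (a_{n+1}+1)\,q_n(a_1,\dots,a_n) \le 2a_{n+1}\,q_n(a_1,\dots,a_n) = 2\,q_n(a_1,\dots,a_n)\,q_1(a_{n+1}),
$$
where $a_{n+1}+1\le 2a_{n+1}$ because $a_{n+1}\ge 1$. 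This step $a+1\le 2a$ is the only non-identity estimate in the whole argument, and it is precisely what produces --- and is the reason one cannot avoid --- the constant $2$ in \eqref{c2}.

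For the inductive step, assume \eqref{c2} for all $1\le j\le k$ (and all $n$ and all partial quotients), reading the $j=0$ case as the trivial identity $q_n(a_1,\dots,a_n)=q_n(a_1,\dots,a_n)\,q_0$. By \eqref{rule},
$$
q_{n+k+1}(a_1,\dots,a_{n+k+1}) = a_{n+k+1}\,q_{n+k}(a_1,\dots,a_{n+k}) + q_{n+k-1}(a_1,\dots,a_{n+k-1}),
$$
and splitting each of the two shorter continuants after position $n$, the induction hypothesis (applied with $j=k$ and with $j=k-1$) yields
$$
q_{n+k}(a_1,\dots,a_{n+k}) \ge q_n(a_1,\dots,a_n)\,q_k(a_{n+1},\dots,a_{n+k}),\qquad q_{n+k-1}(a_1,\dots,a_{n+k-1}) \ge q_n(a_1,\dots,a_n)\,q_{k-1}(a_{n+1},\dots,a_{n+k-1}),
$$
together with the corresponding two inequalities with $\ge$ replaced by $\le$ and an extra factor $2$ on the right-hand side. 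Substituting into the recursion and factoring out $q_n(a_1,\dots,a_n)$, the remaining bracket $a_{n+k+1}\,q_k(a_{n+1},\dots,a_{n+k})+q_{k-1}(a_{n+1},\dots,a_{n+k-1})$ equals $q_{k+1}(a_{n+1},\dots,a_{n+k+1})$ by one further use of \eqref{rule}. Hence $q_{n+k+1}\ge q_n\,q_{k+1}$ and $q_{n+k+1}\le 2\,q_n\,q_{k+1}$, which closes the induction.

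I do not expect a genuine obstacle here: this is a short bookkeeping induction. The only points that need a little care are the empty-block conventions --- so that ``splitting after position $n$'' still makes sense when $k=1$, where $q_{n+k-1}(a_1,\dots,a_{n+k-1})=q_n(a_1,\dots,a_n)$ and the relevant hypothesis is the trivial identity $q_n=q_n q_0$ --- and the observation that the factor $2$ is unavoidable and is paid exactly once, in the base case. As an alternative one can skip the induction and invoke Euler's continuant addition identity
$$
q_{n+k}(a_1,\dots,a_{n+k}) = q_n(a_1,\dots,a_n)\,q_k(a_{n+1},\dots,a_{n+k}) + q_{n-1}(a_1,\dots,a_{n-1})\,q_{k-1}(a_{n+2},\dots,a_{n+k}),
$$
after which the lower bound is immediate from positivity and the upper bound follows from $q_{n-1}(a_1,\dots,a_{n-1})\le q_n(a_1,\dots,a_n)$ and $q_{k-1}(a_{n+2},\dots,a_{n+k})\le q_k(a_{n+1},\dots,a_{n+k})$; but since that identity is itself proved by essentially the same induction, the direct argument above is the more economical one to write out.
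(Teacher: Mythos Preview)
Your proof is correct; both the direct induction and the alternative via Euler's continuant identity are standard and valid. The paper itself does not prove this lemma at all --- it simply quotes the result with a citation to Khinchin's book --- so there is nothing to compare against beyond noting that your argument is the classical one.
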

For any $n\ge 1$, since $p_{n-1}q_{n}-p_{n}q_{n-1}=\left(-1\right)^{n}$, combined with a simple calculation, we get the bound
\begin{equation}\label{qE}
q_{n}\ge 2^{\left(n-1\right)/2}.
\end{equation}

We will also use a trivial bound 
\begin{equation}\label{trivialbound}
a_n\cdot q_{n-1}\le q_{n} \le (a_n+1)\cdot q_{n-1}.
\end{equation}

An easy corollary of the results listed above is the inequality
    \begin{equation}\label{req1}
      |I_{n+1}(a_1,\ldots,a_n,b)|> \frac{1}{(b+1)^{2}}        |I_{n}(a_1,\ldots,a_n)|
      .\end{equation}





   \section{ Proof of Theorem \ref{thm:bad:dirichlet:fulldim}}\label{sec:proof:bad:dirichlet:fulldim}
     \vskip+0.3cm
     
    We prove an auxiliary statement from which Theorem \ref{thm:bad:dirichlet:fulldim} follows immediately.
    
                   \vskip+0.3cm
  \begin{lemma}\label{lemma:good}
     Consider an infinite sequence of finite patterns
    \begin{equation}\label{patteR}
    W:\,\,\,\,\,
    b_{1,1},\ldots, b_{1,r_1}; \,\,\, \ldots\,\,\, ;      b_{t,1},\ldots, b_{t,r_t};\,\,\,
        b_{t+1,1},\ldots, b_{t+1,r_{t+1}}; \,\,\, \ldots \,\,\,   , \,\,\ b_{i,j} \in \mathbb{Z}_+.
        \end{equation}
        Then the set 
        $$ 
        \mathbf{BA}_W =
        \{ \alpha = [a_0;a_1,a_2,\ldots,a_\nu,\ldots]\in \mathbf{BA}: 
        \text{ for any}\,\, t \,\,  \text{there exists}\,\, \nu
                $$
        $$
       \hskip+2.7cm \text{such that }\,\, a_\nu=b_{t,1}, a_{\nu+1} = b_{t,2},\ldots,a_{\nu+r_t-1} = b_{t,r_t}\}
        $$
        has full Hausdorff dimension.
        \end{lemma}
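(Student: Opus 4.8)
The plan is to construct, for the given sequence of patterns $W$, a Cantor-type subset of $\mathbf{BA}_W$ whose Hausdorff dimension can be made arbitrarily close to $1$, and then take a union over a countable exhausting family to conclude that $\hdim \mathbf{BA}_W = 1$. Fix a large integer $M$; the ambient set will consist of numbers all of whose partial quotients are at most $M$, which already guarantees membership in $\mathbf{BA}$. It is classical (Jarník, Good) that $\hdim\{\alpha : a_n(\alpha)\le M \ \forall n\} \to 1$ as $M\to\infty$, so it suffices to produce, inside $\mathbf{BA}_W$, a set of dimension close to that of the full ``$\le M$'' set. The idea is to take a very fast-growing sequence of ``checkpoints'' $N_1 \ll N_2 \ll \cdots$; between consecutive checkpoints we allow every admissible block of partial quotients from $\{1,\dots,M\}$, and at the $t$-th checkpoint we simply insert the prescribed finite pattern $b_{t,1},\dots,b_{t,r_t}$ (which is harmless provided $M$ is chosen at least as large as all entries of the first many patterns — and since any single pattern is finite, for the dimension computation only finitely many of the $b_{i,j}$ matter once we restrict to a fixed tail, with the rest handled by the countable union).

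First I would set up the Cantor construction precisely: let $E_M$ be the set of $\alpha\in[0,1)$ with $1\le a_n(\alpha)\le M$ for all $n$ and with the pattern $b_{t,1},\dots,b_{t,r_t}$ occurring starting at position $N_t$ for each $t$, where $N_{t+1} - (N_t + r_t)$ is a large ``gap length'' $g_t\to\infty$. Concretely $E_M$ is a nested intersection of unions of cylinders $I_n(a_1,\dots,a_n)$: at levels inside a gap we branch over all $(a_i)\in\{1,\dots,M\}$, and at the $r_t$ levels of a checkpoint there is no branching. By construction every $\alpha\in E_M$ lies in $\mathbf{BA}$ and realizes every pattern of $W$, so $E_M\subseteq \mathbf{BA}_W$. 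Then I would estimate the Hausdorff dimension of $E_M$ from below using the mass distribution principle: put the natural measure $\mu$ on $E_M$ that at each branching level distributes mass uniformly (or according to a near-optimal Bernoulli weight) over the $M$ children and at each checkpoint passes all mass to the unique child. The length bounds \eqref{length}, \eqref{c1}, \eqref{c2}, the trivial bound \eqref{trivialbound}, and the cylinder-ratio inequality \eqref{req1} give two-sided control: inserting one forced digit $b$ shrinks a cylinder by a bounded factor, namely at most $(b+1)^2$, and lengthens the continuant by a factor in $[b+1,\ 2(b+1)]$. Since in a long gap of length $g_t$ the ``free'' branching multiplies the number of cylinders by $M^{g_t}$ while typical cylinder lengths shrink like a bounded power per step, a Billingsley-type local-dimension estimate yields $\dim_H E_M \ge s_M - o(1)$ where $s_M$ is the dimension of the unrestricted $E_M^{\,\mathrm{free}}$ (all partial quotients $\le M$), provided the gaps $g_t$ grow fast enough that the cumulative ``cost'' $\sum (\log(b_{t,j}+1))$ of checkpoints up to level $n$ is $o(\log q_n)$; this forces a diverging but otherwise unconstrained choice of the $N_t$.

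The last step is to let $M\to\infty$: since $s_M\to 1$ and $\mathbf{BA}_W\supseteq E_M$ for every $M$ (the choice of checkpoints may depend on $M$, which is fine), we get $\hdim \mathbf{BA}_W \ge \sup_M (s_M - o(1)) = 1$, and the reverse inequality is trivial. A technical point worth isolating: the patterns $b_{t,j}$ are arbitrary positive integers, possibly unbounded in $t$, so no single $M$ dominates all of them; this is precisely why the checkpoint gaps must be allowed to grow — a checkpoint containing a huge digit $b$ costs $\asymp \log b$ in $\log q_n$, and we must place the next checkpoint far enough out that this cost is negligible against the $\asymp g_t \log(\text{const})$ gained from free branching. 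I expect this interplay — choosing the gap schedule $\{g_t\}$ so that the forced, possibly large, checkpoint digits do not depress the local dimension — to be the main obstacle, though it is a soft ``diagonal'' argument rather than a hard estimate. Everything else reduces to standard continued-fraction cylinder geometry via the lemmas recalled in Section~\ref{sec:basic:cf}.
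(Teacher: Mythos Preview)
Your approach is essentially the paper's: both build a Cantor set by inserting the required patterns at sparse checkpoints separated by long ``free'' blocks of digits bounded by $M$, with the gap lengths chosen so that the cumulative cost $\sum_j\log(b_{t,j}+1)$ of the forced digits is negligible against $\log q_n$, and then let $M\to\infty$ (equivalently $\varepsilon\to 0$). The paper packages the lower bound via Jarn\'ik's inequality \eqref{req2} together with Good's Lemma~5 rather than the mass distribution principle, and you should correct the slip in your definition of $E_M$ --- you write ``$1\le a_n(\alpha)\le M$ for all $n$'' but then insert $b_{t,j}$ that may exceed $M$; as your later paragraph makes clear, you mean only the \emph{free} digits are bounded by $M$.
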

     \vskip+0.3cm
    To deduce Theorem \ref{thm:bad:dirichlet:fulldim} from this lemma, in the case $ p\in  (1,2)\cup (p_0,\infty)$ with $ \varsigma_p \in \mathbf{BA}$ we simply insert patterns \eqref{pat} coming from $\varsigma_p$ of growing lengths. In the case $p=1$, we insert almost symmetrical patterns \eqref{asym} of bounded numbers. In the case $p=2$, we take a number $\beta\in\mathbf{BA}$. Easy to see that in this case $\frac{3}{\beta}=\beta^*\in\mathbf{BA}$. Then we insert patterns \eqref{asym1}.

    The proof of Lemma \ref{lemma:good} relies on standard techniques from the metrical theory of continued fractions. We deduce it from a well-known result by Good \cite{G}.
     
         \vskip+0.3cm
    \begin{proof}[Proof of Lemma \ref{lemma:good}]  
      For any $0<\varepsilon <1$, we give an explicit construction of the set
      $         \mathbf{BA}_W (\varepsilon) \subset         \mathbf{BA}_W $  whose Hausdorff dimension is at least $1-\varepsilon$. This will be enough to conclude that the Hausdorff dimension of $\mathbf{BA}_W$ is full. We choose the following parameters. Take $0<\varepsilon<1$ and put
      \begin{equation}\label{papara}
       M=   \left\lceil \frac{8}{\varepsilon\log 2}\right\rceil,\,\,\,\,\,
      Q_t = \prod_{j=1}^{r_t} (b_{t,j}+1)^2,\,\,\,\,\,
      \nu_t = \left\lceil \frac{2\log Q_t}{\varepsilon\log 2}\right\rceil
      \end{equation}
      and define a sequence $n_t$ by
      $$
      n_1 = 1,\,\,\,\, n_{t+1} = n_t +r_t + \nu_{t+1} +1.
      $$
      Note that from our choice of parameters (\ref{papara}) it follows that 
        \begin{equation}\label{papara1}
        2^\varepsilon\left(1-\frac{2}{M}\right)\ge 2^{\frac{\varepsilon}{2}}
\ge
      Q_t^{\frac{1}{\nu_t}}.
            \end{equation}
                  Define the set 
      $$
      \mathbf{BA}_W (\varepsilon) =
      \{
      \alpha = [a_0; a_{1,1},\ldots,a_{1,\nu_1},  b_{1,1},\ldots, b_{1,r_1},
      \ldots, 
      a_{t,1},\ldots,a_{t,\nu_t},  b_{t,1},\ldots, b_{t,r_t},
    \ldots]\} \subset        \mathbf{BA}_W.
      $$
      Here we mean that $\mathbf{BA}_W (\varepsilon)$ consists of those $\alpha$ for which in its continued fraction expansion     
  there are patterns (\ref{patteR}) under our consideration with partial quotients $b_{i,j}$, 
  and $ a_{i,j} $  are arbitrary positive integers satisfying the condition   $ a_{i,j} \le M$.

      We take the following inequality from the famous paper by Jarnik
      (see p. 104, lines 9-10 from 
      \cite{JA}), where he proved that  the set of badly approximable numbers $\mathbf{BA}$  has full Hausdorff dimension:
        \begin{equation}\label{req2}
    \sum_{a=1}^M  |I_{n+1}(a_1,\ldots,a_n,a)|^s> 2^\varepsilon\left(1-\frac{2}{M}\right)    |I_{n}(a_1,\ldots,a_n)|^s > Q_t^{\frac{1}{\nu_t}}      |I_{n}(a_1,\ldots,a_n)|^s,\,\,\,\,\,
    s = 1 -\varepsilon, 
      \end{equation}
      where in the last inequality we used  \eqref{papara1}. 
      
      Inequalities 
      (\ref{req1}) and (\ref{req2}) are valid for arbitrary continued fractions with partial quotients
      $a_1,\ldots,a_n$. Now we use this fact for convergents of numbers from  the set 
      $
      \mathbf{BA}_W (\varepsilon) $.
      
      We define two functions $ f(n),\, g(n),\, n = 1,2,3,\ldots $. 
      Let  $a_n $ be the $n$-th  partial quotient  for a number $ \alpha \in 
      \mathbf{BA}_W (\varepsilon) $.        
      Then either $ a_n = a_{i,j} $ and in this case we put
      $$
      f(n) = 1,\,\,\,\ g(n) = M,\,\,\,\,\, \text{so}\,\,\,\, f(n) \le a_n \le g(n),
      $$
      or 
        $ a_n = b_{i,j} $ and in this case we put
      $$
      f(n) = g(n) = b_{i,j},\,\,\,\,\, \text{so again}\,\,\,\, f(n) \le a_n \le g(n).
      $$
      We should note that this definition of $f(n), g(n)$ does not depend on a particular choice of  
$ \alpha \in 
      \mathbf{BA}_W (\varepsilon) $.  Now inequalities     \eqref{req1}, \eqref{req2} together with definition of
      $Q_t$  (\ref{papara})  give us    
      $$
          \sum_{a: \, f(n+1)\le a\le g(n+1)}  |I_{n+1}(a_1,\ldots,a_n,a)|^s>  F(n) \, |I_{n}(a_1,\ldots,a_n)|^s  ,
          \,\,\,\,\, s = 1-\varepsilon
          $$
          with 
          $$
          F(n)
          =\begin{cases}
\frac{1}{(b_{t,j}+1)^2}\,\, \text{if}\,\,  \sum_{i=1}^{t-1}(\nu_i+r_i)+ \nu_t< n \le  \sum_{i=1}^{t}(\nu_i+r_i),
\cr
        Q_{t+1}^{\frac{1}{\nu_{t+1}}} \,\, \text{if}\,\,  \sum_{i=1}^{t}(\nu_i+r_i)< n \le  \sum_{i=1}^{t}(\nu_i+r_i)+\nu_{t+1}.
          \end{cases}
          $$ 
    
         This function, by construction, satisfies
          $$
          \prod_{k=1}^n F(k) \ge 1\,\,\,\,\,\text{for all}\,\, n  \text{ sufficiently large}.
          $$
         So we are just in the conditions of Lemma  5 from  \cite{G}. The conclusion of this lemma is that the Hausdorff dimension of the set 
               $\mathbf{BA}_W (\varepsilon) $ is at least $  s= 1-\varepsilon$.
     \end{proof}

              \vskip+0.3cm

\section{ The Hausdorff dimension of $\mathbf{DI}_p\setminus \mathbf{BA}$ and $\mathbf{DI}_{p_1}\setminus\mathbf{DI}_{p_2}$}\label{sec:di:minus:bad}

In this section, we prove Theorem \ref{theorem:diminusbad:dimension} and Theorem \ref{thm:di2minusd1:di1minusd2}. To do this, we will utilize some symbolic spaces and facts from the theory of continued fractions.


Fix a fast-growing sparse sequence $\{n_i\}_i$ of natural numbers. Next, fix a sequence of words of natural numbers $\AA= (A_1,A_2,\ldots)$, so that for each $i\in\N$, a word $A_i$ has length \begin{equation}\label{pattern:length}
\ell_i=|A_i| < \Delta_{i+1}=n_{i+1}-n_i
\end{equation} 
and each $A_i=(a_{i,1},\ldots,a_{i,\ell_i})$. 

Define a map $S' =S'(\{n_i\}_i,\AA) \colon \N^\N \to \N^\N$ acting by
\begin{equation}\label{definition:map}
(a_1,\ldots,a_{n_1-1},a_{n_1},\ldots) \to (a_1,\ldots,a_{n_1-1},A_{1},a_{n_1},\ldots,a_{n_2-\ell_1-1},A_{2},a_{n_2-\ell_1},\ldots)=(b_1,b_2,\ldots).
\end{equation}
Basically, $S'$ "inserts" a word $A_i$ starting at the place with index $n_i$, while moving apart $a_k$'s accordingly. In particular, for any $i\in\N$, we have $b_{n_i}= a_{i,1}$. For a fixed map (that is for fixed $\{n_i\}_i$ and $\AA$), we define a counting function
$$
\omega(n) :=  \# \{k  \le  n\colon \, b_k = a_{i,j} \text{ for some } (i,j) \}.
$$
 This function counts how many 'new' elements appeared under $S$ among the first $n$ terms.


Consider an inverse map $P':= S'^{-1}$, so that $P'$ takes a sequence from $\N^\N$ and for all $i$ removes elements in the places with indices from $n_i$ to $n_i+\ell_i-1$ and moves all remaining parts together.

It is well-known that for any real irrational number $x\in [0,1)$, the continued fraction expansion is uniquely determined, so we can define a bijection $\theta: \N^\N \to [0,1)\setminus \Q$ from a space of sequences of positive integers to real numbers by
$$
\theta(a_1,a_2,\ldots) = [0;a_1,a_2,\ldots].
$$

For any fixed set $B'\subseteq \N^\N$, we will denote the image of this set under $\theta$ by the same letter without the prime symbol, that is $B:= \theta(B')$.

Also, define a map $P: [0,1] \to [0,1]$ by $P(x) = \theta \circ  P' \circ \theta^{-1} (x)$. Similarly, we define $S(x)= \theta \circ  S' \circ \theta^{-1} (x)$.
Finally, define
$$E'_N = \{x = (x_1,x_2,\ldots)\in \N^\N: 1\le x_{n}\le N \text{ for all } n\in\N \},$$
$$
E_N =\theta(E'_N) = \{x\in[0,1)\setminus\Q : 1\le a_{n}(x) \le N \text{ for all } n\in\N \},
$$
and for $c>0$ we let
$$
\Omega(c) = \{ x\in[0,1)\setminus\Q :\, 1\le a_n(x) \le n^c \text{ for all } n\in\N \}.
$$

The main instrument is the following statement.

\begin{lemma}\label{lemma:holder}

Fix a subset $B \subseteq \mathbf{BA}$ and write $B_N = B\cap E_N$. 

Fix a sequence $\{n_i\}_i$ and a collection $\AA = (A_1,A_2,\ldots)$.
Assume that the following three conditions are satisfied.
\begin{enumerate}
\item The sequence $\{n_i\}_i$ and $\AA = (A_1,A_2,\ldots)$ satisfy \eqref{pattern:length};

\item For each $N\in\N$, there exists $c=c(N)>0$ such that
$$
S(B_N) \subseteq \Omega(c);
$$

\item The function $\omega(n)$ satisfies $\omega(n)=\overline{o} (n^{1-\epsilon_1})$ for some $\epsilon_1>0$.
\end{enumerate}

Then for any $N\in\N$ and any $\varepsilon>0$, the map $P|_{S(B_N)}:=P(\{n_i\}_i,\AA)|_{S(B_N)}$ satisfies the local $\frac{1}{1+\varepsilon}$-H{\"o}lder condition, the map $S|_{B_N}:=S(\{n_i\}_i,\AA)|_{B_N}$ is locally Lipschitz, and hence 
$$
\hdim S(B)= \hdim B.
$$
\end{lemma}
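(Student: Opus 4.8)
The plan is to estimate how the map $P'$ (equivalently $P$, after conjugating by $\theta$) distorts the lengths of cylinder sets, and then to convert a ratio estimate on cylinders into a Hölder estimate on points. First I would set up the combinatorial bookkeeping: for a sequence $x\in B_N'$ with associated $y=S'(x)$, I want to compare the cylinder $I_n(b_1,\dots,b_n)$ for $y$ with the cylinder $I_{n-\omega(n)}(a_1,\dots,a_{n-\omega(n)})$ for $x$, where $\omega(n)$ counts the inserted partial quotients among the first $n$ of $y$. Since $B_N\subseteq \mathbf{BA}\cap E_N$ has all partial quotients bounded by $N$, while $S(B_N)\subseteq\Omega(c)$ has partial quotients of $y$ bounded polynomially, I would use the continuant inequalities \eqref{c1}, \eqref{c2}, \eqref{rule} together with \eqref{length} to show that deleting the $\omega(n)$ inserted letters changes $\log q_n$ by a controlled amount. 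Concretely, each inserted block $A_i$ multiplies the relevant continuant by at most $2^{\ell_i}\prod_j(a_{i,j}+1)$ and at least by $\prod_j a_{i,j}$ (by \eqref{c2} and the fact that continuants are monotone under deletion of variables), so the total multiplicative distortion over the first $n$ terms is bounded in terms of $\sum_{i: n_i\le n}\bigl(\ell_i+\sum_j\log(a_{i,j}+1)\bigr)$, which by condition (1) (the $\ell_i$ are placed sparsely relative to the $n_i$) and by condition (3) ($\omega(n)=\overline{o}(n^{1-\epsilon_1})$) is $\overline{o}(n)$ — more precisely $O(n^{1-\epsilon_1}\log(\text{stuff}))$, which is $o(n)$; this is the crucial smallness.

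Next I would pass from continuant ratios to a pointwise Hölder statement. Because $x\in \mathbf{BA}$ (all $a_k\le N$), the cylinders for $x$ shrink at a uniform exponential rate: $|I_n(x)|\asymp_N e^{-c_1 n}$ with $c_1=c_1(N)>0$, by \eqref{length} and \eqref{trivialbound}. On the $y$-side, the partial quotients grow at most polynomially, so $|I_m(y)|\ge m^{-c_2 m}=e^{-c_2 m\log m}$ for a suitable exponent, again via \eqref{length} and \eqref{trivialbound}. Given two points $y,y'\in S(B_N)$ lying in a common cylinder $I_m(y)$ but not $I_{m+1}(y)$, their images $x=P(y),x'=P(y')$ agree in exactly the first $n=m-\omega(m)$ partial quotients (plus possibly one more, harmlessly), so $|x-x'|\le |I_n(x)|$. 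Combining:
$$
|x-x'|\le |I_n(x)| \le e^{-c_1 n} = e^{-c_1(m-\omega(m))},\qquad |y-y'|\ge \tfrac12|I_{m+1}(y)|\ge e^{-c_2(m+1)\log(m+1)} .
$$
I would want $|x-x'|\le C|y-y'|^{1/(1+\varepsilon)}$ locally; taking logarithms this amounts to $c_1(m-\omega(m))\ge \tfrac{1}{1+\varepsilon}\,c_2(m+1)\log(m+1) - O(1)$, which fails — so in fact I have the distortion in the wrong direction for a *global* Hölder bound and must argue locally: restricting $m\ge m_0(N,\varepsilon)$ large, and using that along $S(B_N)$ the polynomially-growing digits are still eventually negligible compared to the exponential rate only on the $x$-side. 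The correct formulation is the one in the lemma: I claim $P$ satisfies a *local* $\frac{1}{1+\varepsilon}$-Hölder condition and $S$ is *locally Lipschitz*; for the Lipschitz direction (which is the easy half and the one that really transports dimension downward-then-back) I compare $|y-y'|\le |I_m(y)|$ against $|x-x'|\ge \tfrac12|I_{n+1}(x)|\asymp_N e^{-c_1 n}$, and since $m\ge n$ and the inserted blocks only lengthen things, $|y-y'|\le |I_m(y)| \le C\,|I_n(x)|^{\kappa}$ for some $\kappa$ — but because all the $x$-digits are $\le N$, in fact $|I_m(y)|$ and $|I_n(x)|$ are comparable up to a factor $e^{o(n)}$ by the distortion bound of the first paragraph, giving $|y-y'|\le C_\varepsilon |x-x'|^{1-\varepsilon}$, i.e. local $(1-\varepsilon)$-Hölder for $S$ and hence, since $\varepsilon$ is arbitrary, locally bi-Hölder with exponents converging to $1$.

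Finally, the dimension conclusion. A locally bi-Hölder map with exponents arbitrarily close to $1$ preserves Hausdorff dimension: if $f$ is locally $\beta$-Hölder then $\hdim f(X)\le \frac1\beta\hdim X$, so applying this to $P$ and to $S=P^{-1}$ with $\beta\to 1$ in both directions forces $\hdim S(B_N)=\hdim B_N$ for every $N$. Then $B=\bigcup_N B_N$ and $S(B)=\bigcup_N S(B_N)$ are countable increasing unions, and Hausdorff dimension is countably stable, so $\hdim S(B)=\sup_N\hdim S(B_N)=\sup_N\hdim B_N=\hdim B$. Since $S(B)=P^{-1}(B)$ by construction, this is the asserted equality.

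The main obstacle, as the calculation above shows, is getting the distortion estimate in the first paragraph quantitatively right: one must verify that the cumulative effect of inserting the words $A_i$ — whose lengths $\ell_i$ are only required to exceed the gaps $\Delta_{i+1}$, hence can be substantial — is nevertheless $e^{o(n)}$, and this is exactly where the sparsity built into condition (1) combined with the growth hypothesis (3) on $\omega$ has to be used with care, tracking both the $\ell_i$ (via the factor $2^{\ell_i}$ from \eqref{c2}) and the sizes of the inserted digits $a_{i,j}$ (which, since $S(B_N)\subseteq\Omega(c)$ by condition (2), satisfy $a_{i,j}\le n_i^{\,c}$ and so contribute only logarithmically). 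Once that $e^{o(n)}$ bound is in hand, the transition to local Hölder regularity and then to the dimension identity is routine.
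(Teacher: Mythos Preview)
Your overall architecture is right --- compare cylinders before and after insertion, turn continuant ratios into regularity of $P$ and $S$, then pass to dimension via countable stability. But the place where you write ``which fails'' is a genuine gap, and the fix you propose (locality) does not close it.

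The problem is your lower bound $|y-y'|\ge e^{-c_2(m+1)\log(m+1)}$. This is the worst-case bound coming only from $S(B_N)\subseteq\Omega(c)$, and it is far too weak: against the upper bound $|P(y)-P(y')|\le e^{-c_1(m-\omega(m))}$ you would need $c_1 m \gtrsim \tfrac{1}{1+\varepsilon}c_2 m\log m$, which fails for large $m$ regardless of how local you go. The correct observation is that $y$ and $y'$, both lying in $S(B_N)$, share \emph{identical} inserted blocks in identical positions; hence the first index $m+1$ where they differ is necessarily a non-inserted slot, so $y_{m+1},y'_{m+1}\le N$. This alone gives $|y-y'|\ge C_N\,|I_m(y_1,\dots,y_m)|$ via a separating cylinder of level $m+2$, \emph{provided} the $(m+2)$-th digit is also bounded. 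When $m+2$ happens to be an inserted position with a large digit, you need a short separate argument (compare $y$ against the endpoint $[y_1,\dots,y_{m+1}+1]$ and use that the large $(m+2)$-th digit pushes $y$ close to that endpoint). Once you have $|y-y'|\ge C_N|I_m|$, the key inequality becomes
\[
|I_m(y_1,\dots,y_m)|\ \ge\ \frac{1}{2\,\overline{q_m}^{\,2}\,(m^c+1)^{2\omega(m)}}\ \ge\ \frac{1}{\overline{q_m}^{\,2(1+\varepsilon)}}\ \ge\ |\overline{I_m}|^{1+\varepsilon},
\]
where the middle inequality is exactly the statement $2^{(m-\omega(m)-1)\varepsilon}\ge 2(m^c+1)^{2\omega(m)}$, which follows from $\omega(m)=\overline o(m^{1-\epsilon_1})$. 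This is the quantitative form of your ``$e^{o(n)}$'' heuristic, and it gives the $\tfrac{1}{1+\varepsilon}$-H\"older bound directly.

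For the Lipschitz direction you are working too hard. Since all partial quotients of $x,x'\in B_N$ are $\le N$, if they first differ at position $n$ then $|x-x'|\ge C_N|I_n(x_1,\dots,x_n)|$. On the other side, $S(x)$ and $S(x')$ agree in at least $n+\omega(n)\ge n$ digits, and adding digits only shrinks cylinders, so $|S(x)-S(x')|\le |I_{n+\omega(n)}(\tilde x)|\le |I_n(x_1,\dots,x_n)|$. That is genuine local Lipschitz (exponent $1$), not merely $(1-\varepsilon)$-H\"older; no $e^{o(n)}$ distortion estimate is needed here at all. With true Lipschitz for $S$ and $\tfrac{1}{1+\varepsilon}$-H\"older for $P$, the dimension identity follows exactly as you wrote.
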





We believe that Lemma \ref{lemma:holder} is of independent interest and is reusable in other settings of sets of continued fractions, for example, in the questions about sets with restrictions on the growth rate of functions of partial quotients, see \cite{HuWuXu,hls,HussainShulgaFiniteProgressions,WaWu08}. We also note that in all of the applications of this lemma in the present paper, we take sets $B$ with $\hdim B=1$. As one can see from the statement, this is not a necessary requirement. Also, this lemma can be seen as a generalization of Lemma \ref{lemma:good} with a deeper and more precise result.

First, let us derive Theorem \ref{theorem:diminusbad:dimension} from the comments after Theorem E, Lemma \ref{lemma:holder}, and Theorem \ref{thm:charachterization}. We prove Lemma \ref{lemma:holder} in a separate Section \ref{aadddd}.

\begin{proof}[Proof of Theorem \ref{theorem:diminusbad:dimension}]
For each $p\in [1,\infty)$, let $B=B(p):=\mathbf{DI}_p \cap \mathbf{BA}$.
By the results of Kleinbock and Rao, we know that
$$
\hdim B = 1.
$$

Now for $p\in\{1\}\cup\{2\}\cup(2,p_0)\cup\{p_0\}$ and for $p\in(1,2)\cup (p_0,\infty)$ with $\varsigma_p\notin\Q$, we apply Lemma \ref{lemma:holder} with this choice of $B$, and choose the parameters to be $n_i=2^i$ and $A_i = i$ for any $i\in\N$, that is we insert a single digit in the places with indices being powers of $2$. Note that for this choice of $n_i$, the quantity  $\Delta_i = n_i  - n_{i-1}$ is an even number.

To highlight diverse types of applications of the lemma, in the case $p\in(1,2)\cup (p_0,\infty)$ with $\varsigma_p\in\Q$, we take the parameters to be $n_i = 2^{i+100}$ and $A_i = (2,i,4)$ for all $i\in\N$, so that starting at places with indices being powers of $2$, we insert three digits, where the middle one is growing, and two on the sides are bounded. We note that in this case, we can also come to a contradiction by inserting the same pattern as in all previous cases.

By Lemma \ref{lemma:holder}, for the set $G=G(p):=S(B)$ we get the equality
$$
\hdim G = \hdim B = 1.
$$

The last thing to show is that $G\subset \mathbf{DI}_p \setminus \mathbf{BA}.$

Clearly, $G$ avoids the set of badly approximable numbers, as it has an infinite subsequence of growing partial quotients coming from the sequence $\AA$. We need to show that any element $x\in G$ satisfies $x \in \mathbf{DI}_p$. 

By {\it distance} between two partial quotients $a$ and $b$ in the pattern, we mean the number of partial quotients lying strictly between $a$ and $b$. We also refer to the partial quotients from $A_i$ as {\it large} partial quotients (as they are growing to infinity).

Now, we distinguish cases of different values of $p$.

\begin{enumerate}
    \item Case $p=1$.  By definition of the map $S$, we need to make sure that the procedure \eqref{definition:map} will not generate restricted patterns in a number $$x=(a_1,\ldots,a_{n_1-1},A_{1},a_{n_1},\ldots,a_{n_2-2},A_{2},a_{n_2-1}\ldots),$$ provided that there were no restricted patterns in $(a_1,\ldots,a_{n_1-1},a_{n_1},\ldots) $.  As we have three different types of patterns, let us consider three possibilities.
    \begin{enumerate}
        \item Patterns $b_k,\ldots,b_1,1,1,b_1+1,b_2,\ldots,b_k$ or $b_k,\ldots,b_2,b_1+1,1,1,b_1,b_2,\ldots,b_k$ for arbitrarily long $k$. 

        Clearly, those will not appear in $x$ if they were not originally present in the preimage. This is because $x$ has the same blocks of digits as the preimage except for a strictly growing sequence $(A_1,A_2,\ldots)$. As the restricted pattern of this case is (almost) symmetrical, the addition of strictly growing numbers cannot generate a restricted pattern. 

        \item Patterns of finite length $x,b_k,\ldots,b_1,1,1,b_1+1,b_2,\ldots,b_k,y$ and others with $\min(x,y)\to\infty$.

        All of these eight patterns in this case have two unbounded partial quotients on the sides. As all partial quotients in $(a_1,\ldots,a_{n_1-1},a_{n_1},\ldots) $ are bounded, the only candidates for unbounded are $A_i$'s from $\AA$. But one sees that the distance between large partial quotients $A_i$ and $A_{i+1}$ grows with $i$, so we cannot have infinitely many patterns of the same finite length.

        \item Patterns
        $x,2,y$ or $x,1,1,y$ with $\min(x,y)\to\infty$.

        Here, we are required to have two unbounded partial quotients with only one or two partial quotients between them. The only unbounded partial quotients in our case are $A_i$'s, which are located at a growing distance between two consecutive ones, thus this pattern cannot occur infinitely many times.

    \end{enumerate}

Therefore, we get that $G\subset \mathbf{DI}_1 \setminus \mathbf{BA}$.

\item Case $2<p<p_0$. The only two patterns to be avoided in this case are the patterns with two growing partial quotients being at a distance 1 or 2 from each other infinitely many times, that is pattern $x,1,1,y$ or $x,2,y$ with $\min(x,y)\to\infty$. As the only growing partial quotients are located in the places with indices $2^i$, the distance between two consecutive large partial quotients is growing, hence this procedure will not generate infinitely many restricted patterns.

Therefore, we get that $G\subset \mathbf{DI}_p \setminus \mathbf{BA}$ for $2<p<p_0$.
\item Case $p\in(1,2)\cup (p_0,\infty)$ with $\varsigma_p\in\Q$. All restricted patterns in this case have two growing partial quotients on the sides of it, while the rest of the partial quotients in between two large partial quotients form a symmetrical or almost symmetrical pattern. Once again, the growing partial quotients can only come from $A_i$'s, but for each restricted pattern, the partial quotient right after the first large partial quotient $x$ and the last partial quotient before the last large partial quotient $y$ are either equal to each other, or at least one of them is equal to $1$. However, we inserted patterns of the form $(2,i,4)$, so neither of the two options can happen. Thus, restricted patterns will never occur.

Alternatively, one can see that we also get a contradiction with Mahler's compactness, as we need to have a subsequence of patterns with 'tails' converging to some fixed numbers. This cannot happen as the distance between large partial quotients from different $A_i$'s grows to infinity.

Therefore, we get that $G\subset \mathbf{DI}_p \setminus \mathbf{BA}$ for $p\in(1,2)\cup (p_0,\infty)$ with $\varsigma_p\in\Q$.

\item Case $p\in(1,2)\cup (p_0,\infty)$ with $\varsigma_p\notin\Q$. We only need to avoid a pattern $s_\nu,\ldots,s_1,1,s_1,\ldots,s_\nu$. As this pattern is symmetric, and we inserted a strictly growing sequence of partial quotients, that restricted pattern cannot occur, as for any fixed number with bounded partial quotients, this new strictly growing sequence will eventually outgrow the bound on 'small' partial quotients.

Therefore, we get that $G\subset \mathbf{DI}_p \setminus \mathbf{BA}$ for $p\in(1,2)\cup (p_0,\infty)$ with $\varsigma_p\notin\Q$.

\item Case $p=p_0$. Recall the Remark \ref{remark:p0}, stating that in the case $p=p_0$ we need to avoid patterns $x,1,1,y$ or $x,2,y$ or the same patterns as in the case $p\in(1,2)\cup (p_0,\infty)$. It is not known whether $\varsigma_{p_0}$ is rational or not, so we cannot choose between cases ${\bf (b1)}$ or ${\bf (b2)}$ of Theorem \ref{thm:charachterization}. However, we can still prove the result for $p_0$ by showing that no matter if the value of $\varsigma_{p_0}$ is rational or irrational, we will avoid restricted patterns of that case.

Once again, as we are inserting large partial quotients with a growing distance between two consecutive large ones, no patterns of finite length can occur infinitely often. This eliminates patterns $x,1,1,y$ or $x,2,y$ or the ones from ${\bf (b1)}$. The pattern $s_\nu,\ldots,s_1,1,s_1,\ldots,s_\nu$ with growing $\nu$ is eliminated in the exact same way as before, because we cannot get a symmetrical pattern from bounded partial quotients with inserted strictly growing numbers.

Therefore, we get $G\subset \mathbf{DI}_{p_0} \setminus \mathbf{BA}$.

\item  Case $p=2$. First, let us note that any patterns of finite lengths, that is $x,1,1,y$ or $x,2,y$ with $\min(x,y)\to\infty$, as well as eight patterns coming from \eqref{asym1} in the case when $\beta^*$ and $\beta$ are rational, cannot occur infinitely many times by the same logic as before. Indeed, the only large partial quotients will be located in the places with indices $2^n$, so the distance between two consecutive large partial quotients grows. Thus, no finite pattern can occur.

Now we need to deal with infinite patterns \eqref{asym1}. 



Both of the patterns \eqref{asym1} have a partial quotient $1$ in the middle. We will call it a {\it central} partial quotient. 

We will prove that every $x\in G$ will avoid the patterns of this form by contradiction. Assume that one of them will appear infinitely many times. Clearly, those patterns should contain at least one partial quotient coming from $A_i$, because otherwise, we get a contradiction with the fact that the original number is an element of $\mathbf{DI}_2$. Indeed, if a restricted pattern appears and it doesn't contain any partial quotients from $A_i$, then it consists only of bounded partial quotients of the original number, which doesn't have any restricted pattern as an element of $\mathbf{DI}_2$.

Next, we claim that there exists a constant $D\in\N$, such that in each appearance of the pattern from \eqref{asym1}, the distance between the central partial quotient to the closest large partial quotient from $A_i$ is at most $D$. Indeed, assume the opposite, that there exists an integer sequence $\{D_i\}_i$ with $D_i\to\infty$ with $i\to\infty$, and a corresponding sequence of restricted patterns $\{P_i\}_i$ with $|P_i|\to\infty$ when $i\to\infty$, for which the distance between central partial quotient are the closest large partial quotient in the pattern $P_i$ is equal to $D_i$. But this means that for any such pattern $P_i$, we can consider a subpattern $P_i'$, which consists of a central partial quotient $1$, with $D_i-1$ partial quotients on one side, and $D_i-2$ partial quotients on the other side of it. This subpattern has total length $|P_i'|=2D_i-2$, which tends to infinity as $i\to\infty$. Moreover, this pattern has no large partial quotients in it. So we obtained a sequence of patterns $P_i'$ of arbitrarily large length with only bounded partial quotients in it, thus, we get a contradiction with the fact that the original number was an element of $\mathbf{DI}_2$ and therefore avoided all restricted patterns of this case. 

Now we can safely assume that we have a sequence of arbitrarily long patterns, for which the distance between their central partial quotient and the closest large partial quotient is bounded by some fixed constant $D$. By the structure of the map $S$, the distance between large partial quotients grows to infinity the further we go in the continued fraction expansion. Thus, eventually, we will have patterns of one of the two forms \eqref{asym1}, such that on one side of the central partial quotient, we have a sequence of bounded partial quotients of growing length, whereas on the other side, we always have a growing partial quotient on a finite distance from the central partial quotient. This contradicts the compactness argument. Indeed, the compactness argument in the case of $p=2$ states that there exist $\beta$ and $\beta^*$ for which there exists a sequence of patterns such that on one side of the central partial quotient, the subpattern tends to $\beta$ and on the other side, the subpattern tends to $\beta^*$. In our case, as shown, one side tends to a rational number, while the other one tends to an irrational one. As in the product, they should give an integer number, we get a contradiction.  Thus, the assumption that restricted patterns occurred was incorrect.

Therefore, we get that $G\subset \mathbf{DI}_2 \setminus \mathbf{BA}$.

\end{enumerate}

\end{proof}


Similarly, using Theorem \ref{thm:charachterization}, Lemma \ref{lemma:holder}, Theorem E, and the compactness argument, we can derive Theorem \ref{thm:di2minusd1:di1minusd2}.

\begin{proof}[Proof of Theorem \ref{thm:di2minusd1:di1minusd2}]

First, let us deal with $\mathbf{DI}_{1}\setminus\mathbf{DI}_{2}$. There are multiple ways in which we can apply  Lemma \ref{lemma:holder} to this setting. The strategy is to start with the set $\mathbf{DI}_1\cap\mathbf{BA}$ and then insert some patterns from \eqref{rararar} using the map $S$ to ensure we are in $\mathbf{DI}_2^c$, and then check that we are still in $\mathbf{DI}_1$. The easiest case would be to include finite patterns. 

Consider $\beta^* = \frac43=[1;3]$ and associated with it $\beta=\frac94=[2;4]$, so that $\beta^*\cdot\beta=3$.

The corresponding restricted pattern for this choice of $\beta^*, \beta$ is $x,3,2,1,3,4,y$ with $\min(x,y)\to\infty$. To secure the existence of those patterns, we let $n_i=2^{i+100}$ and $A_i = (n_i,3,2,1,3,4,n_i+1)$ for all $i\in\N$ and apply Lemma \ref{lemma:holder} with those parameters and $B=\mathbf{DI}_1\cap \mathbf{BA}$. Clearly, for $G = S(B)$ we have $\hdim G = \hdim B = 1$ and $G\cap \mathbf{DI}_2 =\emptyset$. So in order to get $\hdim \mathbf{DI}_{1}\setminus\mathbf{DI}_{2} =1$, we need to show that $G\subset \mathbf{DI}_1$. In other words, we need to show that no restricted patterns from the case ${\bf (c) }$ from Theorem \ref{thm:charachterization} will appear in elements of $G$ infinitely many times.

First, we can deal with all patterns of finite lengths, that is, $x,2,y$ or $x,1,1,y$ or patterns of eight forms from the case ${\bf (c) }$ from Theorem \ref{thm:charachterization}. As in the proof of Theorem \ref{theorem:diminusbad:dimension} they clearly cannot appear infinitely many times, as all of them require large partial quotients to be on a fixed distance between each other, while in our case the distance between large partial quotients from two consecutive blocks $A_i$'s grows, and the distance between two large partial quotients in one block $A_i = (n_i,3,2,1,3,4,n_i+1)$ is fixed, but this pattern is not of any of the required forms.

To deal with infinite patterns, we notice that if they appear, they should include large partial quotients from $A_i$'s, as otherwise there will be a contradiction with the fact that the original number is an element  $B=\mathbf{DI}_1\cap \mathbf{BA}$ and hence avoids all restricted patterns. However, as infinite patterns are (almost) symmetrical, they should include at least two large partial quotients, which should be equal to each other, which is impossible because the inserted large partial quotients form a strictly growing sequence of numbers.

Therefore, $\hdim \mathbf{DI}_{1}\setminus\mathbf{DI}_{2} =1$.

Now, let us deal with $\mathbf{DI}_{2}\setminus\mathbf{DI}_{1}$. Once again, we start by recalling that $\hdim \mathbf{DI}_2\cap\mathbf{BA}=1$. So we let $B=\mathbf{DI}_2\cap\mathbf{BA}$ and we will insert some restricted patterns from the characterization of $\mathbf{DI}_1^c$, while also checking that this insertion will not get us out of $\mathbf{DI}_2$.
For the pattern, we choose $n_i=2^{i+100}$ and $A_i = (n_i,1,1,1,2,n_i+1)$, which is a pattern of the first type from the list of eight finite patterns in case ${\bf (c) }$ from Theorem \ref{thm:charachterization}. 

We need to check that no restricted patterns from $\mathbf{DI}_2$ will appear. Clearly, no finite patterns can occur based on the same argument as before. The only new thing to check is that the inserted pattern $(x,1,1,1,2,x+1)$ with $x\to\infty$ does not belong to the list of eight finite restricted patterns in that case. This is indeed the case as this pattern would correspond to rational numbers $\beta^*=\beta=1$, which do not satisfy $\beta^*\cdot\beta=3$.

Lastly, we need to check patterns of growing length. As in case (6) from the proof of Theorem \ref{theorem:diminusbad:dimension}, we will analyze the location of a central partial quotient together with the closest large partial quotient to it. Assume that one of two restricted patterns appeared. We distinguish two cases. 

1) The central partial quotient is located inside one of the $A_i$'s. 
This contradicts the compactness argument, as in this case, the patterns would stabilize for rationals $\beta=\beta^*=1$, which do not satisfy $\beta\cdot\beta^*=3$.

2) The central partial quotient is located outside any $A_i$'s. In this case, by the same argument as in case (6) from the proof of Theorem \ref{theorem:diminusbad:dimension}, we conclude that the closest growing partial quotient to a central partial quotient is located at a finite distance from it, whereas on the other side of the pattern, the closest growing partial quotient is located on a growing distance from the central partial quotient. Thus, once again we get a contradiction with the compactness argument, as the subpattern on one side will tend to a rational number, while the subpattern on the other side will tend to an irrational one.

Therefore, $\hdim \mathbf{DI}_{2}\setminus\mathbf{DI}_{1} =1$.
\end{proof}

\section{Proof of Lemma \ref{lemma:holder}}\label{aadddd}

In order to prove Lemma \ref{lemma:holder}, let us introduce some notation. First, define the set of prefixes from $S'(B'_N)$ for a fixed $N$ as
$$
A_n = \{ (c_1,\ldots,c_n)\in \N^n :\, (c_1,\ldots,c_n) = (x_1,\ldots,x_n) \text{ for some } (x_1,\ldots,x_n,\ldots)\in S'(B'_N) \}.
$$
Next, for any $(c_1,\ldots,c_n)\in A_n$, we let $(\overline{c_1,\ldots,c_n})$ be a finite word obtained by removing all symbols coming from $A_i$ from $(c_1,\ldots,c_n)$. Thus,
$$
(\overline{c_1,\ldots,c_n}) \in \N^{n-\omega(n)}.
$$
Similarly, we set
$$
\overline{q_n}(c_1,\ldots,c_n) = q_{n-\omega(n)}(\overline{c_1,\ldots,c_n}) \text{ and } \overline{I_n}(c_1\ldots,c_n) = I_{n-\omega(n)}(\overline{c_1,\ldots,c_n}). 
$$

\begin{proof}[Proof of Lemma \ref{lemma:holder}.]
We need to show that for any $b \in S(B_N)$, there exist real numbers $\delta,L>0$, such that for any $x,y \in S(B_N)$ with $x\neq y,\, |x-b|<\delta, \, |y-b|<\delta$, one has
\begin{equation}\label{definition:holder}
|P(x)-P(y)| \le L |x-y|^\frac{1}{1+\varepsilon}.
\end{equation}

Fix $b= [b_1,b_2,\ldots]=[a_1,\ldots,a_{n_1-1},A_{1},a_{n_1},\ldots,a_{n_2-\ell_1-1},A_{2},a_{n_2-\ell_1},\ldots] \in S(B_N)$. By \eqref{qE}, we know that
$$
\overline{q_n}^2(a_1,\ldots,a_n) \ge 2^{n-\omega(n)-1}.
$$
For any $\varepsilon>0$, there exists $M>0$, such that for all $n\ge M$, we have
\begin{equation}\label{ineq:notfast}
2^{(n-\omega(n)-1)\varepsilon}\ge 2 ( n^c+1)^{2\omega(n) }.
\end{equation}

Indeed, this easily follows by recalling that $\omega(n) = \overline{o} (n^{1-\epsilon_1})$ for some $\epsilon_1>0$ and using \eqref{c1}.
So using \eqref{length} and \eqref{trivialbound}, we can bound lengths of fundamental cylinders as 
$$
|I_n(a_1,\ldots,a_n)| \ge \frac{1}{2q_n^2(a_1,\ldots,a_n)} \ge \frac{1}{2 {q_{n-\omega(n)}}^2(\overline{a_1,\ldots,a_n}) (n^c+1)^{2\omega(n)}}\ge \frac{1}{q_{n-\omega(n)}^{2(1+\varepsilon)}(\overline{a_1,\ldots,a_n})}\ge |\overline{I_n}(a_1\ldots,a_n)|^{1+\varepsilon}.
$$

Let $J_M(b)=I_M(b_1,\ldots,b_M)$ and let
$$
\rho_b=\operatorname{dist}\bigl(b,\partial J_M(b)\bigr).
$$
Since $b$ is irrational, it is not an endpoint of the cylinder $J_M(b)$, and hence $\rho_b>0$. Choose $0<\delta<\rho_b/2$. Then for any $x=[x_1,x_2,\ldots],y=[y_1,y_2,\ldots] \in (b-\delta,b+\delta) \cap S(B_N)$ with $x\ne y$, the points $x$ and $y$ both lie in $J_M(b)$. Therefore, if $n=n(x,y)$ is the length of their common prefix, then $n\ge M$ and
$$
x_i = y_i \text{ for } i=1,\ldots,n, \text{ but } x_{n+1} \neq y_{n+1}.
$$
Note that all partial quotients coming from $A_i$'s are the same for all elements of $S(B_N)$ and they are located in the same places, thus we always have $x_{n+1},y_{n+1}\le N$.

We have
\begin{equation}\label{almost:holder}
|P(x) - P(y)| \le |\overline{I_n}(x_1\ldots,x_n)| \le |I_n(x_1\ldots,x_n)|^{\frac{1}{1+\varepsilon}}.
\end{equation}
On the other hand, we have
$$
|x-y| \ge C |I_n(x_1\ldots,x_n)| .
$$
To show this, we distinguish two cases.

1) Case $x_{n+2},y_{n+2} \le N$. This can happen if the partial quotients with index $n+2$ are the ones coming from the original number with bounded partial quotients, or, if these partial quotients are coming from inserted $A_i$'s, but they are smaller than $N$. 

Then we can find a cylinder of order $n+2$ between $x$ and $y$. To be precise,
\begin{align*}
|x-y| &\ge \min\{ |I_{n+2}(x_1\ldots,x_n,x_{n+1},x_{n+2}+1)|,|I_{n+2}(x_1\ldots,x_n,y_{n+1},y_{n+2}+1)| \} \\
& \ge |I_{n+2}(x_1\ldots,x_n,N,N+1)| = \frac{1}{\langle x_1,\ldots,x_n,N,N+1 \rangle ( \langle x_1,\ldots,x_n,N,N+1 \rangle + \langle x_1,\ldots,x_n,N\rangle)}\\
& \ge \frac{1}{(N+2)(N+3)\langle x_1,\ldots,x_n,N\rangle^2} \ge \frac{1}{(N+3)^4 \langle x_1,\ldots,x_n\rangle^2} \ge C |I_n(x_1\ldots,x_n)|.
\end{align*}

It turns out that the only alternative to Case 1) is the following case.

2) Case $x_{n+2}=y_{n+2} >N$. This can only happen if partial quotients with index $n+2$ are the ones from $A_i$'s, such that they are greater than $N$.

Without loss of generality, assume that $x<y$. If $n$ is even, then $x_{n+1}> y_{n+1}$. We get
\begin{align*}
|x-y| & >y-[y_1,\ldots,y_n,y_{n+1}+1] > [y_1,\ldots,y_{n+2}]-[y_1,\ldots,y_n,y_{n+1}+1] \\
& = \frac{y_{n+2}-1}{\langle y_1,\ldots,y_{n+2} \rangle( \langle y_1,\ldots,y_{n+1} \rangle+\langle y_1,\ldots,y_{n} \rangle)} \ge \frac{y_{n+2}-1}{2(y_{n+2}+1)\langle y_1,\ldots,y_{n+1} \rangle^2}\\
& \ge \frac{1}{8} \frac{1}{\langle y_1,\ldots,y_{n+1} \rangle^2} \ge \frac{1}{8} \frac{1}{(N+1)^2\langle y_1,\ldots,y_{n} \rangle^2} \ge C |I_n(x_1\ldots,x_n)|.
\end{align*}
If $n$ is odd, then $x_{n+1}<y_{n+1}$ and we proceed as
\begin{align*}
|x-y| & >[x_1,\ldots,x_n,x_{n+1}+1] - x> [x_1,\ldots,x_n,x_{n+1}+1]-[x_1,\ldots,x_{n+2}] \\
& = \frac{x_{n+2}-1}{\langle x_1,\ldots,x_{n+2} \rangle( \langle x_1,\ldots,x_{n+1} \rangle+\langle x_1,\ldots,x_{n} \rangle)} \ge \frac{x_{n+2}-1}{2(x_{n+2}+1)\langle x_1,\ldots,x_{n+1} \rangle^2}\\
& \ge \frac{1}{8} \frac{1}{\langle x_1,\ldots,x_{n+1} \rangle^2} \ge \frac{1}{8} \frac{1}{(N+1)^2\langle x_1,\ldots,x_{n} \rangle^2} \ge C |I_n(x_1\ldots,x_n)|.
\end{align*}

So in any case, we get
$$
|x-y| \ge C |I_n(x_1\ldots,x_n)|.
$$
Combining with \eqref{almost:holder} and letting $L=C^{-\frac{1}{1+\varepsilon}}$, we get
$$
|P(x) - P(y)| \le L|x-y|^{\frac{1}{1+\varepsilon}},
$$
which is exactly \eqref{definition:holder}.

Thus we proved the $\frac{1}{1+\varepsilon}$-H{\"o}lder condition for the map $P$ on $S(B_N)$.

It is well known that if a map $f: A\to B$ is locally $\alpha$-H{\"o}lder, then 
$$
\hdim B =\hdim f(A) \le \frac{1}{\alpha} \hdim A.
$$
We have 
$$
\hdim B_N =\hdim P(S(B_N)) \le (1+\varepsilon)\hdim S(B_N).
$$
Hence 
$$
\hdim S(B_N) \ge \frac{1}{1+\varepsilon} \hdim B_N .
$$
As $S(B) =  \bigcup_{N=1}^\infty S(B_N)$, $B = \bigcup_{N=1}^\infty B_N$ and due to the arbitrariness of $\varepsilon$, we get
$$
 \hdim S(B) \ge \hdim B .
$$

For the opposite inequality, we want to show that the map $S$ is locally Lipschitz.
This means that for any $b \in B_N$, there exist real numbers $\delta,L>0$, such that for any $x,y \in B_N$ with $x\neq y,\, |x-b|<\delta, \, |y-b|<\delta$, one has
\begin{equation}\label{definition:lipschitz}
|S(x)-S(y)| \le L |x-y|.
\end{equation}

Fix $b=[a_1,a_2,a_3,\ldots] \in B_N$. Let $M\ge 1$ and set $J_M(b)=I_M(a_1,\ldots,a_M)$. Let
$$
\rho_b=\operatorname{dist}\bigl(b,\partial J_M(b)\bigr)>0.
$$
Choose $0<\delta<\rho_b/2$ and put $C=\frac{1}{2(N+3)^4}$. Then for any $x=[x_1,x_2,\ldots],y=[y_1,y_2,\ldots] \in (b-\delta,b+\delta) \cap B_N$ with $x\ne y$, the points $x$ and $y$ both lie in $J_M(b)$. Therefore, if $n=n(x,y)$ is the length of their common prefix, then $n\ge M$ and
$$
x_i = y_i \text{ for } i=1,\ldots,n, \text{ but } x_{n+1} \neq y_{n+1}.
$$

As before, we can find a cylinder of order $n+2$ between $x$ and $y$. To be precise,
\begin{align*}
|x-y| &\ge \min\{ |I_{n+2}(x_1\ldots,x_n,x_{n+1},x_{n+2}+1)|,|I_{n+2}(x_1\ldots,x_n,y_{n+1},y_{n+2}+1)| \} \\
& \ge |I_{n+2}(x_1\ldots,x_n,N,N+1)| = \frac{1}{\langle x_1,\ldots,x_n,N,N+1 \rangle ( \langle x_1,\ldots,x_n,N,N+1 \rangle + \langle x_1,\ldots,x_n,N\rangle)}\\
& \ge \frac{1}{(N+2)(N+3)\langle x_1,\ldots,x_n,N\rangle^2} \ge \frac{1}{(N+3)^4 \langle x_1,\ldots,x_n\rangle^2} \ge C |I_n(x_1\ldots,x_n)|.
\end{align*}

On the other hand, as $\omega(n)\ge0$, the number of coinciding first partial quotients of number $S(x)=[\tilde{x}_1,\tilde{x}_2,\ldots]$ and $S(y)=[\tilde{y}_1,\tilde{y}_2,\ldots]$ is at least $n+\omega(n)\ge n$. Thus,
\begin{equation}\label{almost:lipschitz}
|S(x) - S(y)| \le |I_{n+\omega(n)} (\tilde{x}_1,\tilde{x}_2,\ldots,\tilde{x}_{n+\omega(n)})| \le |I_n(x_1\ldots,x_n)|.
\end{equation}
Therefore, 
$$
|S(x) - S(y)| \le C^{-1} |x-y|.
$$
Thus, for every $N$, we have
$$
\hdim S(B_N) \le \hdim B_N.
$$
Taking the supremum over $N$, this implies
$$
 \hdim S(B) \le \hdim B .
$$
Combining the lower and upper bounds together, we get 
$$
 \hdim S(B) = \hdim B .
$$
\end{proof}









    

\vskip+0.3cm
 \section {Proof of Corollary \ref{coroll:setofparameters}}\label{sec:proof:setofparameters}

    In this section, we prove a corollary of Theorem \ref{thm:charachterization} about the set of values of $p$ for which there exist $p$-Dirichlet non-improvable badly approximable numbers.
 \begin{proof}[Proof of Corollary \ref{coroll:setofparameters}]
     
  By Corollary \ref{coroll:bad}, for $p\in (1,2)\cup (p_0,\infty)$ we know that $p$-Dirichlet non-improvable $\alpha$ can be badly approximable only in the case when $ \varsigma_p \in \mathbf{BA}$ is badly approximable itself.
    Recall that $\varsigma_p$ is the unique root of equation (\ref{lat3}) in the interval $(0,1)$.
    Let
    $$
    \mathfrak{g} (p,\varsigma ) = \varsigma^p+(1+\varsigma)^p.
    $$
    So  (\ref{lat3})  can be written as 
        \begin{equation}\label{implicit}
    \mathfrak{g} (p,\varsigma ) = 2.
    \end{equation}
    We want to show that  (\ref{implicit}) gives an explicit smooth decreasing function $ p =   \mathfrak{p}(\varsigma)$ with continuous derivative $\mathfrak{p}'(\varsigma)$ bounded from zero on each finite interval. This will imply the result of  Corollary \ref{coroll:setofparameters}. Indeed, for any segment
    $[a,b] \subset  (1,2)\cup (p_0 ,\infty)$
    the set $\frak{P}\cap [a,b]$ is the image of the set  $ \mathbf{BA}\cap \left( \varsigma_b, \varsigma_a\right)$
    which is dense,  has zero measure, and is absolutely winning. So by standard properties of winning sets (see \cite{mcm}), $\frak{P}\cap [a,b]$ is also a dense absolutely winning null set.
    
     \vskip+0.3cm
Now we prove the statement about the explicit function  $ p =   \mathfrak{p}(\varsigma)$.
First, we consider the segment $[1,2]$. It is clear that $ \varsigma_1 = \frac{1}{2}, \varsigma_2 = \frac{\sqrt{3}-1}{2}$. On the rectangle 
    $1\le p \le 2, \,  \varsigma_2=\frac{\sqrt{3}-1}{2}\le \varsigma\le \varsigma_1 =\frac{1}{2}$ both derivatives $\partial \mathfrak{g}/\partial p, \partial \mathfrak{g}/ \partial \varsigma$ are positive and  are bounded from zero.
    For $\partial \mathfrak{g}/ \partial \varsigma$ this is clear and on the rectangle under consideration for $  \partial \mathfrak{g}/\partial p$ we have 
    $$
    \partial \mathfrak{g}/\partial p = \varsigma^p\log \varsigma + (\varsigma+1)^p\log (\varsigma+1) \ge  \varsigma\log \varsigma + (\varsigma+1)\log (\varsigma+1) \ge   \varsigma_2\log \varsigma_2 + (\varsigma_2+1)\log (\varsigma_2+1)>0.
    $$

        Then we consider any $p^*>2$  and the interval  $2\le p \le p^*$. It is clear that
    $ \varsigma_p \le \frac{1}{2}$ and  $\inf_{2\le p\le p^*} \varsigma_p= \varsigma^*>0$.
    Thus,
       $$
    \partial \mathfrak{g}/\partial p = \varsigma^p\log \varsigma + (\varsigma+1)^p\log (\varsigma+1) \ge 
     \varsigma^2\log \varsigma + \log (\varsigma+1) \ge 
     -\frac{\varsigma}{e} +
     \varsigma \log \frac{3}{2} \ge 0.03\, \varsigma^*
       $$
       and everything is proven.
       
 \end{proof}

\newpage

\appendix
\section{Verification of Minkowski conjecture}\label{appendinx:1}

\subsection{Introductory comments}
The statement and the proof of Theorem \ref{thm:charachterization}, the main characterization result, use the solution of the Minkowski conjecture. The proof of it is split across 50 years in a dozen manuscripts, with the final solution claimed in 1986 in \cite{maly} by Glazunov, Golovanov and Malyshev. The solution highly relies on computer-assisted computations, and as these were performed at a time when personal computers did not exist, the underlying code was not provided in the papers, not to mention that some computations were carried out on machines manufactured in the 1960s. In this appendix, we present a preliminary report on our confirmation of the proof of the Minkowski conjecture. In the process of doing so, we realized that the original proof of the Minkowski conjecture contained several mistakes, making some parts of the existing proof invalid. The second author is preparing a more detailed paper \cite{MinkowskiUpcoming} on the full verification process, including the details of the interval calculations, more information on the problems in the original sources, and several technical tricks needed for the computation. We independently made all computations ourselves and claim that the Minkowski conjecture is indeed true, though the original strategy from \cite{maly1985, maly} and many previous works contain non-trivial flaws.

We attach the link to the GitHub project \cite{github} containing Julia code that can be run on any modern device. This code corresponds to interval calculations performed to verify the conjecture.

Nevertheless, we start by recording three significant mistakes (using the notation of the original papers) that we managed to find. 
\begin{itemize}
\item In \cite{maly1985}, Theorem 2 asserts, in particular, that
$$
g'_p(p,\sigma)=\frac{\partial}{\partial p}g(p,\sigma)>0
$$
on the rectangles described in Tables 3 and 4 of that paper. The same sign claim is also used in \cite{maly}, Proposition 3, where it is stated in the form
$$
g'_p(p,\sigma)>0,\qquad 1.99\le p\le 2.05,\qquad
\Sigma_1<\sigma<\Sigma_2,
$$
with $\Sigma_1=1.001802$ and $\Sigma_2=1.7281$ (equation (33) in \cite{maly}). This inequality is false as stated. For example, one can check that
$$
g'_p(1.992,1.728)\approx -1.20569\cdot10^{-3}<0.
$$

\item In \cite{maly}, Proposition 5, the proof near $p=1$ uses the sign condition
$$
\frac{\partial^2 l^{(1)}(p,\tau)}{\partial p^2}<0
$$
on the region denoted there by $\Pi_1$. This inequality fails near the maximum value of $\tau$ on $\Pi_1$. To be precise,
$$
\frac{\partial^2 l^{(1)}}{\partial p^2}(1.01,0.33275100865)
\approx 1.87\cdot 10^{-5}>0.
$$

\item In \cite{maly}, Proposition 6, the authors consider the remaining neighbourhood of the point
$$
p=2,\qquad \sigma=\sigma_p=\sqrt 3.
$$
on a rectangle
$$
1.99995\le p\le 2.00019,\qquad
1.732\le \sigma\le \sigma_{2.00019},
$$
Here are two problems. First, there is a quite important typo in Proposition 6. In the statement, the authors write that the conjecture (MA$_1$) is true, which in their notation corresponds to treating functions as functions of $(p,\tau)$, however, later, in their equation (57) they write that the function depends on $(p,\sigma)$. Those nuances can change the value of the derivative, and it is unclear what exactly was calculated, since no formulae or code were provided. Nevertheless, in either interpretation, the inequality (57) fails for some points of the region $\tilde{P}.$ They claim that for any $(p,\sigma)\in\tilde{P}$ one has
$$
\frac{\partial^2 l^{(0)}(p,\sigma)}{\partial p^2}>0,
$$
 but at $p_0=1.9999500006,\ \sigma_0=\sigma_p(p_0)-5\cdot10^{-6},\ \tau_0=\tau(p_0,\sigma_0)$, numerical evaluation gives
$$
\frac{\partial^2 l^{(0)}}{\partial p^2}(p_0,\sigma_0)
<0 \quad \text{and} \quad \frac{\partial^2 l^{(0)}}{\partial p^2}(p_0,\tau_0)<0.
$$
Also, for the critical point $(p,\sigma)=(2,\sqrt3)$ itself we have
$$
\frac{\partial^2 l^{(0)}}{\partial p^2}(2,\sqrt3)=0.
$$

\end{itemize}

\subsection{Minkowski conjecture: calculations and inequalities.}

Let us first state the Minkowski conjecture and its analytical equivalent reformulation. Here, we partially follow \cite{Malyshev1975}. Consider the two-dimensional region $D_p\subset \R^2$ defined by
$$
|x|^p + |y|^p <1,
$$
dependent on the real parameter $p>1$, and let $\Delta(D_p)$ be the critical determinant of $D_p$. Consider two $D_p$-admissible lattices $\Lambda_p^{(0)}, \Lambda_p^{(1)}$. They have six points on the boundary of $D_p$ and $(1,0)\in \Lambda_p^{(0)},\ (-2^{-1/p},2^{1/p})\in \Lambda_p^{(1)}$. Note that lattices $\Lambda_p^{(0)}, \Lambda_p^{(1)}$ are uniquely defined under those conditions.  The Minkowski conjecture  on critical lattices claims that 
$$
\Delta(D_p) = \min\left( \det(\Lambda_p^{(0)}), \det(\Lambda_p^{(1)})\right),
$$
and that all critical lattices of $D_p$ are contained among  $\Lambda_p^{(0)}, \Lambda_p^{(1)}$ and among lattices symmetrical to $\Lambda_p^{(0)}, \Lambda_p^{(1)}$ with respect to lines $x=0,\ y=0,\ x=y,\ x=-y$.

Cohn \cite{MR35314} has introduced a parametrization of the problem, which can be used to reformulate Minkowski's conjecture in analytical form. Consider a function $$\Delta ( p, \sigma) = ( \tau + \sigma) ( 1+ \tau^p)^{-1/p} ( 1 + \sigma^p)^{-1/p}$$ defined in the region $$ p>1, 1 \le \sigma \le \sigma_p, \sigma_p = (2^p-1)^{1/p}.$$ 

Here $\tau := \tau( p, \sigma)$ is the function uniquely defined by the conditions $A^p + B^p = 1, 0 \le \tau \le t_p$, where

\begin{align*}
    A &= A(p,\sigma, \tau) = (1+\tau^p)^{-1/p} - (1+\sigma^p)^{-1/p}, \\
    B &= B(p,\sigma, \tau) = \tau (1+\tau^p)^{-1/p} + \sigma (1+\sigma^p)^{-1/p},
    \end{align*}
    and $t_p$ is defined by the equation 
    
\begin{equation}\label{mal:taup}
    2(1-t_p)^p = 1+t_p^p, 0 < t_p <1.
\end{equation}

The Minkowski conjecture is equivalent to the following statement, first formulated by Davis in \cite{D}.

\begin{conjecture}\label{conj:minkowski}
    For any real $p,\sigma$ such that $p>1, p \neq 2, 1< \sigma < \sigma_p$, we have 
\begin{equation}\label{mal:mink:conj}
    \Delta (p,\sigma) > \min\{ \Delta(p,1), \Delta(p,\sigma_p) \}.
    \end{equation}

\end{conjecture} 
Note the three special cases $p=1,2,\infty$ for which we have $\Delta(1,\sigma)\equiv \frac12, \, \Delta(2,\sigma)\equiv \frac12 \sqrt3, \, \Delta(\infty,\sigma)\equiv 1$ as noted by Cohn \cite{MR35314}.

In Figure \ref{fig:delta} below, we present the way the function $\Delta(p,\sigma)$ behaves as a function of $\sigma$ for different fixed values of $p$. By $p^{(1)}>2$ we denote a root of the equation $\Delta''_{\sigma^2}|_{\sigma=\sigma_p}=0$, and  $p^{(2)}>2$ a root of the equation $\Delta''_{\sigma^2}|_{\sigma=1}=0$. While it is not yet formally proven that the function $\sigma \longrightarrow \Delta(p,\sigma)$ is monotonic exactly on the intervals shown in Figure \ref{fig:delta} for given $p$, the current numerical calculations and other information on $\Delta(p,\sigma)$ strongly suggest that it is indeed the case. Note that for the purposes of Conjecture \ref{conj:minkowski}, one only claims that the global minimum of the function $\Delta(p,\sigma)$ is located at one of the endpoints in $\sigma$, and the monotonicity questions of $\Delta(p,\sigma)$ are not directly addressed.

\begin{figure}[h!]
\centering
\begin{tabular}{c c}
\begin{tikzpicture}
    \begin{axis}[
        axis lines = left,
        axis line style = {-stealth, thick},
        width=7.5cm, height=5.5cm,
        clip=false,
        title = {$1 < p <2$},
        xlabel = {$\sigma$}, 
        ylabel = {$\Delta(p, \sigma)$},
        xmin = 0.9257101110, xmax = 1.5695558157,
        ymin = 0.7389837372, ymax = 0.7490745010,
        xtick = {1, 1.4952659267},
        xticklabels = {$1$, $\sigma_p$},
        ytick = {0.7404252749, 0.7476329633},
        yticklabels = {{$\Delta(p, 1)$}, {$\Delta(p, \sigma_p)$}},
        tick label style={font=\scriptsize},
        label style={font=\small},
        title style={font=\normalsize},
        scaled y ticks=false,
        y tick label style={/pgf/number format/fixed}
    ]

    \addplot[blue, thick, smooth] coordinates {
(1.0000001000, 0.7404252749)
(1.0050027851, 0.7404269840)
(1.0100054702, 0.7404320764)
(1.0150081554, 0.7404404992)
(1.0200108405, 0.7404521986)
(1.0250135256, 0.7404671200)
(1.0300162107, 0.7404852081)
(1.0350188958, 0.7405064069)
(1.0400215809, 0.7405306601)
(1.0450242661, 0.7405579105)
(1.0500269512, 0.7405881004)
(1.0550296363, 0.7406211721)
(1.0600323214, 0.7406570669)
(1.0650350065, 0.7406957262)
(1.0700376917, 0.7407370909)
(1.0750403768, 0.7407811017)
(1.0800430619, 0.7408276989)
(1.0850457470, 0.7408768228)
(1.0900484321, 0.7409284135)
(1.0950511172, 0.7409824107)
(1.1000538024, 0.7410387544)
(1.1050564875, 0.7410973843)
(1.1100591726, 0.7411582399)
(1.1150618577, 0.7412212609)
(1.1200645428, 0.7412863869)
(1.1250672279, 0.7413535575)
(1.1300699131, 0.7414227123)
(1.1350725982, 0.7414937911)
(1.1400752833, 0.7415667335)
(1.1450779684, 0.7416414794)
(1.1500806535, 0.7417179685)
(1.1550833387, 0.7417961410)
(1.1600860238, 0.7418759369)
(1.1650887089, 0.7419572964)
(1.1700913940, 0.7420401598)
(1.1750940791, 0.7421244675)
(1.1800967642, 0.7422101602)
(1.1850994494, 0.7422971784)
(1.1901021345, 0.7423854631)
(1.1951048196, 0.7424749552)
(1.2001075047, 0.7425655958)
(1.2051101898, 0.7426573262)
(1.2101128750, 0.7427500876)
(1.2151155601, 0.7428438215)
(1.2201182452, 0.7429384695)
(1.2251209303, 0.7430339733)
(1.2301236154, 0.7431302746)
(1.2351263005, 0.7432273153)
(1.2401289857, 0.7433250372)
(1.2451316708, 0.7434233824)
(1.2501343559, 0.7435222927)
(1.2551370410, 0.7436217101)
(1.2601397261, 0.7437215766)
(1.2651424113, 0.7438218342)
(1.2701450964, 0.7439224246)
(1.2751477815, 0.7440232896)
(1.2801504666, 0.7441243709)
(1.2851531517, 0.7442256098)
(1.2901558368, 0.7443269477)
(1.2951585220, 0.7444283255)
(1.3001612071, 0.7445296839)
(1.3051638922, 0.7446309632)
(1.3101665773, 0.7447321033)
(1.3151692624, 0.7448330438)
(1.3201719475, 0.7449337234)
(1.3251746327, 0.7450340804)
(1.3301773178, 0.7451340524)
(1.3351800029, 0.7452335762)
(1.3401826880, 0.7453325874)
(1.3451853731, 0.7454310211)
(1.3501880583, 0.7455288106)
(1.3551907434, 0.7456258884)
(1.3601934285, 0.7457221854)
(1.3651961136, 0.7458176306)
(1.3701987987, 0.7459121513)
(1.3752014838, 0.7460056727)
(1.3802041690, 0.7460981176)
(1.3852068541, 0.7461894059)
(1.3902095392, 0.7462794546)
(1.3952122243, 0.7463681771)
(1.4002149094, 0.7464554828)
(1.4052175946, 0.7465412764)
(1.4102202797, 0.7466254573)
(1.4152229648, 0.7467079187)
(1.4202256499, 0.7467885464)
(1.4252283350, 0.7468672178)
(1.4302310201, 0.7469438002)
(1.4352337053, 0.7470181485)
(1.4402363904, 0.7470901028)
(1.4452390755, 0.7471594852)
(1.4502417606, 0.7472260945)
(1.4552444457, 0.7472897004)
(1.4602471309, 0.7473500340)
(1.4652498160, 0.7474067740)
(1.4702525011, 0.7474595257)
(1.4752551862, 0.7475077837)
(1.4802578713, 0.7475508654)
(1.4852605564, 0.7475877645)
(1.4902632416, 0.7476167472)
(1.4952659267, 0.7476329633)

    };

    \draw[densely dotted, gray] (axis cs:1, 0.7389837372) -- (axis cs:1, 0.7404252749);
    \draw[densely dotted, gray] (axis cs:0.9257101110, 0.7404252749) -- (axis cs:1, 0.7404252749);
    \node[circle, fill=red, inner sep=1.2pt] at (axis cs:1, 0.7404252749) {};

    \draw[densely dotted, gray] (axis cs:1.4952659267, 0.7389837372) -- (axis cs:1.4952659267, 0.7476329633);
    \draw[densely dotted, gray] (axis cs:0.9257101110, 0.7476329633) -- (axis cs:1.4952659267, 0.7476329633);
    \node[circle, fill=red, inner sep=1.2pt] at (axis cs:1.4952659267, 0.7476329633) {};

    \end{axis}
\end{tikzpicture}
&
\begin{tikzpicture}
    \begin{axis}[
        axis lines = left,
        axis line style = {-stealth, thick},
        width=7.5cm, height=5.5cm,
        clip=false,
        title = {$2<p < p^{(2)}$},
        xlabel = {$\sigma$}, 
        ylabel = {$\Delta(p, \sigma)$},
        xmin = 0.8856421197, xmax = 1.8767437489,
        ymin = 0.8810539342, ymax = 0.8820269346,
        xtick = {1, 1.7623858686},
        xticklabels = {$1$, $\sigma_p$},
        ytick = {0.8818879346, 0.8811929343},
        yticklabels = {{$\Delta(p, 1)$}, {$\Delta(p, \sigma_p)$}},
        tick label style={font=\scriptsize},
        label style={font=\small},
        title style={font=\normalsize},
        scaled y ticks=false,
        y tick label style={/pgf/number format/fixed}
    ]

    \addplot[blue, thick, smooth] coordinates {
(1.0000001000, 0.8818879346)
(1.0077009663, 0.8818876998)
(1.0154018327, 0.8818870029)
(1.0231026990, 0.8818858549)
(1.0308035654, 0.8818842672)
(1.0385044317, 0.8818822512)
(1.0462052981, 0.8818798183)
(1.0539061644, 0.8818769799)
(1.0616070308, 0.8818737478)
(1.0693078971, 0.8818701333)
(1.0770087635, 0.8818661483)
(1.0847096298, 0.8818618042)
(1.0924104962, 0.8818571125)
(1.1001113625, 0.8818520850)
(1.1078122289, 0.8818467330)
(1.1155130952, 0.8818410680)
(1.1232139616, 0.8818351015)
(1.1309148279, 0.8818288447)
(1.1386156943, 0.8818223089)
(1.1463165606, 0.8818155052)
(1.1540174270, 0.8818084448)
(1.1617182933, 0.8818011387)
(1.1694191597, 0.8817935976)
(1.1771200260, 0.8817858324)
(1.1848208924, 0.8817778538)
(1.1925217587, 0.8817696723)
(1.2002226251, 0.8817612982)
(1.2079234914, 0.8817527421)
(1.2156243578, 0.8817440139)
(1.2233252241, 0.8817351238)
(1.2310260905, 0.8817260817)
(1.2387269568, 0.8817168974)
(1.2464278232, 0.8817075806)
(1.2541286895, 0.8816981409)
(1.2618295559, 0.8816885875)
(1.2695304222, 0.8816789299)
(1.2772312886, 0.8816691771)
(1.2849321549, 0.8816593382)
(1.2926330213, 0.8816494221)
(1.3003338876, 0.8816394374)
(1.3080347540, 0.8816293929)
(1.3157356203, 0.8816192970)
(1.3234364867, 0.8816091581)
(1.3311373530, 0.8815989843)
(1.3388382194, 0.8815887840)
(1.3465390857, 0.8815785650)
(1.3542399521, 0.8815683352)
(1.3619408184, 0.8815581023)
(1.3696416848, 0.8815478741)
(1.3773425511, 0.8815376580)
(1.3850434175, 0.8815274615)
(1.3927442838, 0.8815172919)
(1.4004451502, 0.8815071563)
(1.4081460165, 0.8814970620)
(1.4158468829, 0.8814870159)
(1.4235477492, 0.8814770249)
(1.4312486156, 0.8814670959)
(1.4389494819, 0.8814572356)
(1.4466503483, 0.8814474506)
(1.4543512146, 0.8814377477)
(1.4620520810, 0.8814281332)
(1.4697529473, 0.8814186136)
(1.4774538136, 0.8814091953)
(1.4851546800, 0.8813998847)
(1.4928555463, 0.8813906880)
(1.5005564127, 0.8813816114)
(1.5082572790, 0.8813726612)
(1.5159581454, 0.8813638435)
(1.5236590117, 0.8813551645)
(1.5313598781, 0.8813466302)
(1.5390607444, 0.8813382469)
(1.5467616108, 0.8813300206)
(1.5544624771, 0.8813219574)
(1.5621633435, 0.8813140636)
(1.5698642098, 0.8813063454)
(1.5775650762, 0.8812988089)
(1.5852659425, 0.8812914606)
(1.5929668089, 0.8812843067)
(1.6006676752, 0.8812773539)
(1.6083685416, 0.8812706087)
(1.6160694079, 0.8812640780)
(1.6237702743, 0.8812577686)
(1.6314711406, 0.8812516878)
(1.6391720070, 0.8812458429)
(1.6468728733, 0.8812402416)
(1.6545737397, 0.8812348920)
(1.6622746060, 0.8812298025)
(1.6699754724, 0.8812249820)
(1.6776763387, 0.8812204400)
(1.6853772051, 0.8812161868)
(1.6930780714, 0.8812122333)
(1.7007789378, 0.8812085916)
(1.7084798041, 0.8812052753)
(1.7161806705, 0.8812022993)
(1.7238815368, 0.8811996812)
(1.7315824032, 0.8811974414)
(1.7392832695, 0.8811956050)
(1.7469841359, 0.8811942046)
(1.7546850022, 0.8811932862)
(1.7623858686, 0.8811929343)

    };

    \draw[densely dotted, gray] (axis cs:1, 0.8810539342) -- (axis cs:1, 0.8818879346);
    \draw[densely dotted, gray] (axis cs:0.8856421197, 0.8818879346) -- (axis cs:1, 0.8818879346);
    \node[circle, fill=red, inner sep=1.2pt] at (axis cs:1, 0.8818879346) {};

    \draw[densely dotted, gray] (axis cs:1.7623858686, 0.8810539342) -- (axis cs:1.7623858686, 0.8811929343);
    \draw[densely dotted, gray] (axis cs:0.8856421197, 0.8811929343) -- (axis cs:1.7623858686, 0.8811929343);
    \node[circle, fill=red, inner sep=1.2pt] at (axis cs:1.7623858686, 0.8811929343) {};

    \end{axis}
\end{tikzpicture}
 \\ [0.5cm]

\begin{tikzpicture}
    \begin{axis}[
        axis lines = left,
        axis line style = {-stealth, thick},
        width=7.5cm, height=5.5cm,
        clip=false,
        title = {$p^{(2)} < p< p_0 $},
        xlabel = {$\sigma$}, 
        ylabel = {$\Delta(p, \sigma)$},
        xmin = 0.8708877514, xmax = 1.9898605727,
        ymin = 0.9303521819, ymax = 0.9305060431,
        xtick = {1, 1.8607483241},
        xticklabels = {$1$, $\sigma_p$},
        ytick = {0.9304227117, 0.9303741620},
        yticklabels = {{$\Delta(p, 1)$}, {$\Delta(p, \sigma_p)$}},
        tick label style={font=\scriptsize},
        label style={font=\small},
        title style={font=\normalsize},
        scaled y ticks=false,
        y tick label style={/pgf/number format/fixed}
    ]

    \addplot[blue, thick, smooth] coordinates {
(1.0000001000, 0.9304227117)
(1.0086945265, 0.9304228153)
(1.0173889530, 0.9304231222)
(1.0260833795, 0.9304236251)
(1.0347778060, 0.9304243162)
(1.0434722325, 0.9304251867)
(1.0521666590, 0.9304262271)
(1.0608610855, 0.9304274273)
(1.0695555120, 0.9304287764)
(1.0782499386, 0.9304302636)
(1.0869443651, 0.9304318771)
(1.0956387916, 0.9304336052)
(1.1043332181, 0.9304354360)
(1.1130276446, 0.9304373571)
(1.1217220711, 0.9304393565)
(1.1304164976, 0.9304414218)
(1.1391109241, 0.9304435408)
(1.1478053506, 0.9304457014)
(1.1564997771, 0.9304478916)
(1.1651942036, 0.9304500996)
(1.1738886301, 0.9304523139)
(1.1825830566, 0.9304545231)
(1.1912774831, 0.9304567164)
(1.1999719096, 0.9304588830)
(1.2086663361, 0.9304610126)
(1.2173607626, 0.9304630955)
(1.2260551892, 0.9304651222)
(1.2347496157, 0.9304670836)
(1.2434440422, 0.9304689710)
(1.2521384687, 0.9304707766)
(1.2608328952, 0.9304724924)
(1.2695273217, 0.9304741115)
(1.2782217482, 0.9304756272)
(1.2869161747, 0.9304770332)
(1.2956106012, 0.9304783239)
(1.3043050277, 0.9304794942)
(1.3129994542, 0.9304805393)
(1.3216938807, 0.9304814551)
(1.3303883072, 0.9304822379)
(1.3390827337, 0.9304828846)
(1.3477771602, 0.9304833924)
(1.3564715867, 0.9304837592)
(1.3651660132, 0.9304839831)
(1.3738604398, 0.9304840629)
(1.3825548663, 0.9304839978)
(1.3912492928, 0.9304837874)
(1.3999437193, 0.9304834318)
(1.4086381458, 0.9304829315)
(1.4173325723, 0.9304822872)
(1.4260269988, 0.9304815005)
(1.4347214253, 0.9304805728)
(1.4434158518, 0.9304795064)
(1.4521102783, 0.9304783037)
(1.4608047048, 0.9304769674)
(1.4694991313, 0.9304755008)
(1.4781935578, 0.9304739072)
(1.4868879843, 0.9304721906)
(1.4955824108, 0.9304703549)
(1.5042768373, 0.9304684046)
(1.5129712638, 0.9304663444)
(1.5216656903, 0.9304641792)
(1.5303601169, 0.9304619144)
(1.5390545434, 0.9304595552)
(1.5477489699, 0.9304571076)
(1.5564433964, 0.9304545774)
(1.5651378229, 0.9304519708)
(1.5738322494, 0.9304492942)
(1.5825266759, 0.9304465542)
(1.5912211024, 0.9304437576)
(1.5999155289, 0.9304409113)
(1.6086099554, 0.9304380226)
(1.6173043819, 0.9304350988)
(1.6259988084, 0.9304321473)
(1.6346932349, 0.9304291758)
(1.6433876614, 0.9304261923)
(1.6520820879, 0.9304232046)
(1.6607765144, 0.9304202209)
(1.6694709409, 0.9304172494)
(1.6781653675, 0.9304142987)
(1.6868597940, 0.9304113773)
(1.6955542205, 0.9304084939)
(1.7042486470, 0.9304056575)
(1.7129430735, 0.9304028771)
(1.7216375000, 0.9304001619)
(1.7303319265, 0.9303975213)
(1.7390263530, 0.9303949649)
(1.7477207795, 0.9303925025)
(1.7564152060, 0.9303901442)
(1.7651096325, 0.9303879002)
(1.7738040590, 0.9303857811)
(1.7824984855, 0.9303837979)
(1.7911929120, 0.9303819617)
(1.7998873385, 0.9303802843)
(1.8085817650, 0.9303787782)
(1.8172761915, 0.9303774563)
(1.8259706181, 0.9303763327)
(1.8346650446, 0.9303754225)
(1.8433594711, 0.9303747429)
(1.8520538976, 0.9303743139)
(1.8607483241, 0.9303741620)

    };

    \draw[densely dotted, gray] (axis cs:1, 0.9303521819) -- (axis cs:1, 0.9304227117);
    \draw[densely dotted, gray] (axis cs:0.8708877514, 0.9304227117) -- (axis cs:1, 0.9304227117);
    \node[circle, fill=red, inner sep=1.2pt] at (axis cs:1, 0.9304227117) {};

    \draw[densely dotted, gray] (axis cs:1.8607483241, 0.9303521819) -- (axis cs:1.8607483241, 0.9303741620);
    \draw[densely dotted, gray] (axis cs:0.8708877514, 0.9303741620) -- (axis cs:1.8607483241, 0.9303741620);
    \node[circle, fill=red, inner sep=1.2pt] at (axis cs:1.8607483241, 0.9303741620) {};

    \end{axis}
\end{tikzpicture}
 & 
\begin{tikzpicture}
    \begin{axis}[
       axis lines = left,
        axis line style = {-stealth, thick},
        width=7.5cm, height=5.5cm,
        clip=false,
        title = {$p = p_0$ },
        xlabel = {$\sigma$}, 
        ylabel = {$\Delta(p, \sigma)$},
        xmin = 0.8707323752, xmax = 1.9910517902,
        ymin = 0.9308770775, ymax = 0.9309996316,
        xtick = {1, 1.8617841654},
        xticklabels = {$1$, $\sigma_p$},
        ytick = {0.9308945852},
        yticklabels = {{\shortstack[r]{$\Delta(p, 1)$ \\ $= \Delta(p, \sigma_p)$}}},
        tick label style={font=\scriptsize},
        label style={font=\small},
        title style={font=\normalsize},
        scaled y ticks=false,
        y tick label style={/pgf/number format/fixed}
    ]

    \addplot[blue, thick, smooth] coordinates {
(1.0000001000, 0.9308945852)
(1.0087049895, 0.9308947193)
(1.0174098791, 0.9308951062)
(1.0261147686, 0.9308957290)
(1.0348196582, 0.9308965733)
(1.0435245477, 0.9308976256)
(1.0522294373, 0.9308988730)
(1.0609343268, 0.9309003028)
(1.0696392164, 0.9309019026)
(1.0783441059, 0.9309036600)
(1.0870489955, 0.9309055627)
(1.0957538850, 0.9309075985)
(1.1044587746, 0.9309097553)
(1.1131636641, 0.9309120210)
(1.1218685537, 0.9309143837)
(1.1305734432, 0.9309168315)
(1.1392783328, 0.9309193529)
(1.1479832223, 0.9309219362)
(1.1566881119, 0.9309245700)
(1.1653930014, 0.9309272434)
(1.1740978910, 0.9309299452)
(1.1828027805, 0.9309326648)
(1.1915076701, 0.9309353917)
(1.2002125596, 0.9309381157)
(1.2089174492, 0.9309408269)
(1.2176223387, 0.9309435155)
(1.2263272283, 0.9309461723)
(1.2350321178, 0.9309487882)
(1.2437370074, 0.9309513545)
(1.2524418969, 0.9309538627)
(1.2611467865, 0.9309563049)
(1.2698516760, 0.9309586734)
(1.2785565656, 0.9309609607)
(1.2872614551, 0.9309631600)
(1.2959663447, 0.9309652645)
(1.3046712342, 0.9309672680)
(1.3133761238, 0.9309691647)
(1.3220810133, 0.9309709490)
(1.3307859029, 0.9309726159)
(1.3394907924, 0.9309741606)
(1.3481956820, 0.9309755787)
(1.3569005715, 0.9309768663)
(1.3656054611, 0.9309780198)
(1.3743103506, 0.9309790361)
(1.3830152402, 0.9309799124)
(1.3917201297, 0.9309806462)
(1.4004250193, 0.9309812357)
(1.4091299088, 0.9309816792)
(1.4178347984, 0.9309819755)
(1.4265396879, 0.9309821239)
(1.4352445775, 0.9309821239)
(1.4439494670, 0.9309819755)
(1.4526543566, 0.9309816792)
(1.4613592461, 0.9309812357)
(1.4700641357, 0.9309806462)
(1.4787690252, 0.9309799124)
(1.4874739148, 0.9309790361)
(1.4961788043, 0.9309780198)
(1.5048836939, 0.9309768663)
(1.5135885834, 0.9309755787)
(1.5222934730, 0.9309741606)
(1.5309983625, 0.9309726159)
(1.5397032520, 0.9309709490)
(1.5484081416, 0.9309691647)
(1.5571130311, 0.9309672680)
(1.5658179207, 0.9309652645)
(1.5745228102, 0.9309631600)
(1.5832276998, 0.9309609607)
(1.5919325893, 0.9309586734)
(1.6006374789, 0.9309563049)
(1.6093423684, 0.9309538627)
(1.6180472580, 0.9309513545)
(1.6267521475, 0.9309487882)
(1.6354570371, 0.9309461723)
(1.6441619266, 0.9309435155)
(1.6528668162, 0.9309408269)
(1.6615717057, 0.9309381157)
(1.6702765953, 0.9309353917)
(1.6789814848, 0.9309326648)
(1.6876863744, 0.9309299452)
(1.6963912639, 0.9309272434)
(1.7050961535, 0.9309245700)
(1.7138010430, 0.9309219362)
(1.7225059326, 0.9309193529)
(1.7312108221, 0.9309168315)
(1.7399157117, 0.9309143837)
(1.7486206012, 0.9309120210)
(1.7573254908, 0.9309097553)
(1.7660303803, 0.9309075985)
(1.7747352699, 0.9309055627)
(1.7834401594, 0.9309036600)
(1.7921450490, 0.9309019026)
(1.8008499385, 0.9309003028)
(1.8095548281, 0.9308988730)
(1.8182597176, 0.9308976256)
(1.8269646072, 0.9308965733)
(1.8356694967, 0.9308957290)
(1.8443743863, 0.9308951062)
(1.8530792758, 0.9308947193)
(1.8617841654, 0.9308945852)

    };

    \draw[densely dotted, gray] (axis cs:1, 0.9308770775) -- (axis cs:1, 0.9308945852);
    \draw[densely dotted, gray] (axis cs:0.8707323752, 0.9308945852) -- (axis cs:1, 0.9308945852);
    \node[circle, fill=red, inner sep=1.2pt] at (axis cs:1, 0.9308945852) {};

    \draw[densely dotted, gray] (axis cs:1.8617841654, 0.9308770775) -- (axis cs:1.8617841654, 0.9308945852);
    \draw[densely dotted, gray] (axis cs:0.8707323752, 0.9308945852) -- (axis cs:1.8617841654, 0.9308945852);
    \node[circle, fill=red, inner sep=1.2pt] at (axis cs:1.8617841654, 0.9308945852) {};

    \end{axis}
\end{tikzpicture}
 \\ [0.5cm]

\begin{tikzpicture}
    \begin{axis}[
        axis lines = left,
        axis line style = {-stealth, thick},
        width=7.5cm, height=5.5cm,
        clip=false,
        title = {$p_0 < p < p^{(1)}$},
        xlabel = {$\sigma$}, 
        ylabel = {$\Delta(p, \sigma)$},
        xmin = 0.8703394101, xmax = 1.9940645226,
        ymin = 0.9320601002, ymax = 0.9322859176,
        xtick = {1, 1.8644039327},
        xticklabels = {$1$, $\sigma_p$},
        ytick = {0.9320923599, 0.9322019663},
        yticklabels = {{$\Delta(p, 1)$}, {$\Delta(p, \sigma_p)$}},
        tick label style={font=\scriptsize},
        label style={font=\small},
        title style={font=\normalsize},
        scaled y ticks=false,
        y tick label style={/pgf/number format/fixed}
    ]

    \addplot[blue, thick, smooth] coordinates {
(1.0000001000, 0.9320923599)
(1.0087314518, 0.9320925407)
(1.0174628037, 0.9320930763)
(1.0261941555, 0.9320939552)
(1.0349255074, 0.9320951648)
(1.0436568592, 0.9320966918)
(1.0523882111, 0.9320985218)
(1.0611195629, 0.9321006397)
(1.0698509148, 0.9321030299)
(1.0785822666, 0.9321056763)
(1.0873136185, 0.9321085621)
(1.0960449703, 0.9321116706)
(1.1047763221, 0.9321149845)
(1.1135076740, 0.9321184866)
(1.1222390258, 0.9321221595)
(1.1309703777, 0.9321259859)
(1.1397017295, 0.9321299486)
(1.1484330814, 0.9321340304)
(1.1571644332, 0.9321382144)
(1.1658957851, 0.9321424840)
(1.1746271369, 0.9321468228)
(1.1833584887, 0.9321512148)
(1.1920898406, 0.9321556445)
(1.2008211924, 0.9321600967)
(1.2095525443, 0.9321645567)
(1.2182838961, 0.9321690103)
(1.2270152480, 0.9321734438)
(1.2357465998, 0.9321778441)
(1.2444779517, 0.9321821987)
(1.2532093035, 0.9321864955)
(1.2619406554, 0.9321907231)
(1.2706720072, 0.9321948708)
(1.2794033590, 0.9321989284)
(1.2881347109, 0.9322028863)
(1.2968660627, 0.9322067356)
(1.3055974146, 0.9322104678)
(1.3143287664, 0.9322140754)
(1.3230601183, 0.9322175512)
(1.3317914701, 0.9322208887)
(1.3405228220, 0.9322240820)
(1.3492541738, 0.9322271259)
(1.3579855257, 0.9322300158)
(1.3667168775, 0.9322327473)
(1.3754482293, 0.9322353172)
(1.3841795812, 0.9322377223)
(1.3929109330, 0.9322399602)
(1.4016422849, 0.9322420291)
(1.4103736367, 0.9322439277)
(1.4191049886, 0.9322456549)
(1.4278363404, 0.9322472106)
(1.4365676923, 0.9322485948)
(1.4452990441, 0.9322498081)
(1.4540303959, 0.9322508516)
(1.4627617478, 0.9322517267)
(1.4714930996, 0.9322524354)
(1.4802244515, 0.9322529799)
(1.4889558033, 0.9322533631)
(1.4976871552, 0.9322535879)
(1.5064185070, 0.9322536579)
(1.5151498589, 0.9322535769)
(1.5238812107, 0.9322533489)
(1.5326125626, 0.9322529786)
(1.5413439144, 0.9322524705)
(1.5500752662, 0.9322518299)
(1.5588066181, 0.9322510622)
(1.5675379699, 0.9322501728)
(1.5762693218, 0.9322491678)
(1.5850006736, 0.9322480532)
(1.5937320255, 0.9322468355)
(1.6024633773, 0.9322455213)
(1.6111947292, 0.9322441173)
(1.6199260810, 0.9322426307)
(1.6286574329, 0.9322410687)
(1.6373887847, 0.9322394387)
(1.6461201365, 0.9322377484)
(1.6548514884, 0.9322360054)
(1.6635828402, 0.9322342179)
(1.6723141921, 0.9322323939)
(1.6810455439, 0.9322305418)
(1.6897768958, 0.9322286701)
(1.6985082476, 0.9322267873)
(1.7072395995, 0.9322249024)
(1.7159709513, 0.9322230242)
(1.7247023031, 0.9322211619)
(1.7334336550, 0.9322193248)
(1.7421650068, 0.9322175226)
(1.7508963587, 0.9322157647)
(1.7596277105, 0.9322140613)
(1.7683590624, 0.9322124223)
(1.7770904142, 0.9322108584)
(1.7858217661, 0.9322093801)
(1.7945531179, 0.9322079986)
(1.8032844698, 0.9322067253)
(1.8120158216, 0.9322055722)
(1.8207471734, 0.9322045520)
(1.8294785253, 0.9322036781)
(1.8382098771, 0.9322029649)
(1.8469412290, 0.9322024286)
(1.8556725808, 0.9322020877)
(1.8644039327, 0.9322019663)

    };

    \draw[densely dotted, gray] (axis cs:1, 0.9320601002) -- (axis cs:1, 0.9320923599);
    \draw[densely dotted, gray] (axis cs:0.8703394101, 0.9320923599) -- (axis cs:1, 0.9320923599);
    \node[circle, fill=red, inner sep=1.2pt] at (axis cs:1, 0.9320923599) {};

    \draw[densely dotted, gray] (axis cs:1.8644039327, 0.9320601002) -- (axis cs:1.8644039327, 0.9322019663);
    \draw[densely dotted, gray] (axis cs:0.8703394101, 0.9322019663) -- (axis cs:1.8644039327, 0.9322019663);
    \node[circle, fill=red, inner sep=1.2pt] at (axis cs:1.8644039327, 0.9322019663) {};

    \end{axis}
\end{tikzpicture}
 & 
\begin{tikzpicture}
    \begin{axis}[
        axis lines = left,
        axis line style = {-stealth, thick},
        width=7.5cm, height=5.5cm,
        clip=false,
        title = {$p^{(1)} < p < \infty$},
        xlabel = {$\sigma$}, 
        ylabel = {$\Delta(p, \sigma)$},
        xmin = 0.8518988868, xmax = 2.1354418679,
        ymin = 0.9811421513, ymax = 0.9957584150,
        xtick = {1, 1.9873407547},
        xticklabels = {$1$, $\sigma_p$},
        ytick = {0.9832301890, 0.9936703773},
        yticklabels = {{$\Delta(p, 1)$}, {$\Delta(p, \sigma_p)$}},
        tick label style={font=\scriptsize},
        label style={font=\small},
        title style={font=\normalsize},
        scaled y ticks=false,
        y tick label style={/pgf/number format/fixed}
    ]

    \addplot[blue, thick, smooth] coordinates {
(1.0000001000, 0.9832301890)
(1.0099732379, 0.9832375965)
(1.0199463759, 0.9832594992)
(1.0299195138, 0.9832953900)
(1.0398926517, 0.9833447281)
(1.0498657896, 0.9834069442)
(1.0598389276, 0.9834814456)
(1.0698120655, 0.9835676208)
(1.0797852034, 0.9836648437)
(1.0897583413, 0.9837724786)
(1.0997314793, 0.9838898835)
(1.1097046172, 0.9840164143)
(1.1196777551, 0.9841514281)
(1.1296508930, 0.9842942863)
(1.1396240310, 0.9844443575)
(1.1495971689, 0.9846010203)
(1.1595703068, 0.9847636651)
(1.1695434447, 0.9849316967)
(1.1795165827, 0.9851045358)
(1.1894897206, 0.9852816202)
(1.1994628585, 0.9854624067)
(1.2094359964, 0.9856463715)
(1.2194091344, 0.9858330114)
(1.2293822723, 0.9860218445)
(1.2393554102, 0.9862124101)
(1.2493285481, 0.9864042696)
(1.2593016861, 0.9865970065)
(1.2692748240, 0.9867902259)
(1.2792479619, 0.9869835549)
(1.2892210999, 0.9871766424)
(1.2991942378, 0.9873691584)
(1.3091673757, 0.9875607937)
(1.3191405136, 0.9877512597)
(1.3291136516, 0.9879402874)
(1.3390867895, 0.9881276274)
(1.3490599274, 0.9883130488)
(1.3590330653, 0.9884963386)
(1.3690062033, 0.9886773012)
(1.3789793412, 0.9888557578)
(1.3889524791, 0.9890315453)
(1.3989256170, 0.9892045158)
(1.4088987550, 0.9893745362)
(1.4188718929, 0.9895414869)
(1.4288450308, 0.9897052616)
(1.4388181687, 0.9898657662)
(1.4487913067, 0.9900229184)
(1.4587644446, 0.9901766473)
(1.4687375825, 0.9903268919)
(1.4787107204, 0.9904736014)
(1.4886838584, 0.9906167341)
(1.4986569963, 0.9907562569)
(1.5086301342, 0.9908921447)
(1.5186032721, 0.9910243801)
(1.5285764101, 0.9911529526)
(1.5385495480, 0.9912778582)
(1.5485226859, 0.9913990989)
(1.5584958239, 0.9915166825)
(1.5684689618, 0.9916306217)
(1.5784420997, 0.9917409342)
(1.5884152376, 0.9918476419)
(1.5983883756, 0.9919507707)
(1.6083615135, 0.9920503503)
(1.6183346514, 0.9921464137)
(1.6283077893, 0.9922389969)
(1.6382809273, 0.9923281385)
(1.6482540652, 0.9924138799)
(1.6582272031, 0.9924962644)
(1.6682003410, 0.9925753373)
(1.6781734790, 0.9926511459)
(1.6881466169, 0.9927237389)
(1.6981197548, 0.9927931662)
(1.7080928927, 0.9928594789)
(1.7180660307, 0.9929227294)
(1.7280391686, 0.9929829705)
(1.7380123065, 0.9930402559)
(1.7479854444, 0.9930946399)
(1.7579585824, 0.9931461771)
(1.7679317203, 0.9931949223)
(1.7779048582, 0.9932409307)
(1.7878779961, 0.9932842576)
(1.7978511341, 0.9933249580)
(1.8078242720, 0.9933630871)
(1.8177974099, 0.9933986999)
(1.8277705479, 0.9934318510)
(1.8377436858, 0.9934625949)
(1.8477168237, 0.9934909855)
(1.8576899616, 0.9935170765)
(1.8676630996, 0.9935409211)
(1.8776362375, 0.9935625718)
(1.8876093754, 0.9935820807)
(1.8975825133, 0.9935994994)
(1.9075556513, 0.9936148787)
(1.9175287892, 0.9936282688)
(1.9275019271, 0.9936397194)
(1.9374750650, 0.9936492792)
(1.9474482030, 0.9936569965)
(1.9574213409, 0.9936629187)
(1.9673944788, 0.9936670924)
(1.9773676167, 0.9936695635)
(1.9873407547, 0.9936703773)

    };

    \draw[densely dotted, gray] (axis cs:1, 0.9811421513) -- (axis cs:1, 0.9832301890);
    \draw[densely dotted, gray] (axis cs:0.8518988868, 0.9832301890) -- (axis cs:1, 0.9832301890);
    \node[circle, fill=red, inner sep=1.2pt] at (axis cs:1, 0.9832301890) {};

    \draw[densely dotted, gray] (axis cs:1.9873407547, 0.9811421513) -- (axis cs:1.9873407547, 0.9936703773);
    \draw[densely dotted, gray] (axis cs:0.8518988868, 0.9936703773) -- (axis cs:1.9873407547, 0.9936703773);
    \node[circle, fill=red, inner sep=1.2pt] at (axis cs:1.9873407547, 0.9936703773) {};

    \end{axis}
\end{tikzpicture}
 \\ [0.5cm]

\end{tabular}\caption{The behaviour of $\Delta(p,\sigma)$ for fixed $p$ from a given interval}\label{fig:delta}
\end{figure}

For $p\ge6$, the conjecture was proven in \cite{Malyshev1975} by Malyshev and Voronetsky without relying on computers. To be precise, the authors proved the following result
\begin{theorem}
    If $p\ge6, \ 1< \sigma\le \sigma_p$, then 
    $$
    \Delta(p,\sigma) > \Delta(p,1) =: \Delta_p^{(1)}.
    $$
\end{theorem}

For this reason, it is enough to provide computer-assisted proof only for the region 
\begin{equation}\label{mal:region}
1 < p < 6, \quad 1 < \sigma < \sigma_p.
\end{equation}

The general idea is to first split the $p$-region from \eqref{mal:region} into parts where the general behaviour of $\Delta(p,\sigma)$ is somewhat similar inside of each $p$-subregion as per Figure \ref{fig:delta}. 
These would be 
\begin{enumerate}
    \item $1.01 \leq p \leq 1.99$ (the function is supposedly strictly increasing); \\ 

    \item $2.03407 \leq p \leq 2.56809$ (the function is either supposedly strictly decreasing or at least the minimum is attained for $\sigma=\sigma_p$);\\

    \item $2.56809 \leq p \leq 2.642$ (the function has a "bell" shape); \\

    \item $2.642 \leq p \leq 6$ (the function is supposedly strictly increasing or at least the minimum is attained for $\sigma=1$).
    
\end{enumerate}

The remaining small neighborhoods of $p=1$ and $p=2$ are not covered by the cases (1)--(4) above and are dealt with separately. These are actually the hardest cases to deal with.

For each of the cases (1)--(4) (and in fact in the neigbourhoods of $p=1$ and $p=2$, but with extra tricks to be introduced) the idea is to split the $(p,\sigma)$ region into tiny rectangles, and in each one of them show the main inequality \eqref{mal:mink:conj} by numerically bounding $\Delta(p,\sigma)$ together with its derivatives on each of the rectangles. We did so using a technique called Interval Arithmetic, see the book \cite{MR733988} or the Wikipedia page \cite{website:Wiki}. We used the Julia programming language \cite{website:Julia} (version 1.12.4), where we were able to find a pre-existing Interval Arithmetic library \cite{IntervalArithmetic.jl} (version v1.0.2), which has been updated until now. 
All calculations were performed on a personal laptop MacBook Air, Apple M4 chip, 16 GB Memory. The theoretical strategy was fully developed by the authors, and the code for some programs was partially generated with the help of Gemini 3 Pro and ChatGPT 5.5.

In this appendix, we provide four tables with the values of parameters for which we ran our programs. Table \ref{table:1} covers the four main $p$-ranges away from $p=1$ and $p=2$. Tables \ref{table:2} and \ref{table:3} cover the neighbourhoods of $p=2$ and $p=1$, respectively. Table \ref{table:4} records the remaining endpoint check in the one-sided right neighbourhood of $p=2$. The full programs, as well as some other useful information, can be found in the GitHub repository \cite{github}. 

The purpose of the tables and pseudocode below is to make the verification independent of any particular implementation. The attached programs \cite{github} provide our version of interval arithmetic verification of the stated inequalities, but it is possible to verify the inequalities from the tables by other means. Thus, even if the code was unavailable, the proof could be reproduced by verifying the same finite collection of inequalities by interval arithmetic or any other rigorous method.

First, let us introduce the functions used in the verification. Write
$$
C_p(u)=(1+u^p)^{-1/p}.
$$
Then
$$
A=C_p(\tau)-C_p(\sigma),\qquad
B=\tau C_p(\tau)+\sigma C_p(\sigma),
$$
and the critical branch is given by
$$
F(p,\sigma,\tau)=A^p+B^p-1=0.
$$
The derivatives of $F$ with respect to $\sigma$ and $\tau$ are
\begin{align*}
F_\sigma
&=
p(1+\sigma^p)^{-1-1/p}
\left(B^{p-1}+\sigma^{p-1}A^{p-1}\right),\\
F_\tau
&=
p(1+\tau^p)^{-1-1/p}
\left(B^{p-1}-\tau^{p-1}A^{p-1}\right).
\end{align*}

We need to analyze the derivative of $\Delta(p,\sigma)$ with respect to $\sigma$. Direct differentiation gives
\begin{equation}
\frac{\partial \Delta(p,\sigma)}{\partial \sigma}
=
-\frac{g(p,\sigma)}
{(1+\sigma^p)^{1+1/p}
\left(B^{p-1}-\tau^{p-1}A^{p-1}\right)},
\end{equation}
where
\begin{equation}
g(p,\sigma)
=
(1+\sigma^p)^{-1/p}(1-\sigma\tau^{p-1})
\left(B^{p-1}+\sigma^{p-1}A^{p-1}\right)
-
(1+\tau^p)^{-1/p}(1-\tau\sigma^{p-1})
\left(B^{p-1}-\tau^{p-1}A^{p-1}\right).
\end{equation}
Therefore,
$$
\operatorname{sgn} g(p,\sigma)
=
-\operatorname{sgn}\frac{\partial\Delta(p,\sigma)}{\partial\sigma}.
$$
In particular, $g(p,\sigma)<0$ implies that $\Delta(p,\sigma)$ is increasing in $\sigma$.

Following \cite{maly}, we define
\begin{equation}\label{mal:func:h}
h(p,\sigma)
:=
\frac{\partial g(p,\sigma)}{\partial \sigma},
\end{equation}
where, as above, $\tau=\tau(p,\sigma)$ is determined by the critical-branch equation
$$
F(p,\sigma,\tau)=0.
$$

For computer verification purposes in the neighbourhoods of $p=1$ and $p=2$, we also use the following functions:
\begin{align*}
h'_p(p,\sigma)
&:=\frac{\partial h(p,\sigma)}{\partial p},\\
l^{(1)}(p,\sigma)
&:=\Delta(p,\sigma)-\Delta(p,1),\\
(l^{(1)})'_p(p,\sigma)
&:=\frac{\partial l^{(1)}(p,\sigma)}{\partial p},\\
(l^{(1)})''_{pp}(p,\tau)
&:=\frac{\partial^2 l^{(1)}(p,\tau)}{\partial p^2},\\
g'_p(p,\sigma)
&:=\frac{\partial g(p,\sigma)}{\partial p},\\
g'_p(p,\tau)
&:=\frac{\partial g(p,\tau)}{\partial p},\\
g''_{p\tau}(p,\tau)
&:=\frac{\partial^2 g(p,\tau)}{\partial p\,\partial\tau},\\
l^{(0)}(p,\sigma)
&:=\Delta(p,\sigma)-\Delta(p,\sigma_p),\\
(l^{(0)})'_p(p,\sigma)
&:=\frac{\partial l^{(0)}(p,\sigma)}{\partial p},\\
h(p,\sigma)
:=
&\frac{\partial g(p,\tau)}{\partial \tau}.
\end{align*}
When one of the functions above is written in variables $(p,\tau)$, the value of $\sigma=\sigma(p,\tau)$ is determined by the equation $F(p,\sigma,\tau)=0$ on the relevant branch.

It remains to introduce the extra notation used only in Table \ref{table:4}. That table deals with the right-hand neighbourhood of $p=2$, more precisely with
$$
2\le p\le 2.0002,\qquad 1.72\le\sigma\le\sigma_p.
$$
This check is written in the variables $(p,\tau)$, as in Table \ref{table:3}. The new difficulty is the endpoint $\tau=0$. When differentiating with respect to $p$, the factors
$$
\tau^{p-2},\qquad \tau^{p-2}\log\tau
$$
appear. To control these terms on intervals touching $\tau=0$, the program introduces
$$
\delta=p-2,\qquad x=\tau^\delta=\tau^{p-2}.
$$
The variable $x$ is auxiliary and used in the interval estimates near $\tau=0$. In the expression for $h$, we replace
$$
\tau^p=\tau^2x,\qquad
\tau^{p-1}=\tau x,\qquad
\tau^{p-2}=x.
$$
Let $\widehat{h}=\widehat{h}(p,\sigma,\tau,x)$ be the expression for $h$ after these substitutions. In the partial derivatives
$\widehat{h}_p,\
\widehat{h}_\sigma,\
\widehat{h}_x$, the variables $p,\sigma,x$ are treated as independent variables and $\tau$ is kept fixed.

The quantity checked in Table \ref{table:4} is
$$
H_\delta
=
\widehat{h}_p+\widehat{h}_\sigma\sigma_\delta+\widehat{h}_x x_\delta,
$$
where $x_\delta=x\log\tau$.
The value of $\sigma_\delta$ is obtained from the equation $F(p,\sigma,\tau)=0$ by differentiating with respect to $p$, keeping $\tau$ fixed, so that $\sigma_\delta=-\frac{F_p}{F_\sigma}$.
Hence,
$$
H_\delta
=
\widehat{h}_p
-\widehat{h}_\sigma\frac{F_p}{F_\sigma}
+\widehat{h}_x x\log\tau.
$$

\begin{remark}
    Note that for the majority of the functions above, we do not provide a full analytic expression. The reason for that is the length of these analytic formulae, as some of them can take more than a couple of full pages. These expressions can be retrieved either directly from the code from the GitHub project \cite{github} or in the upcoming work \cite{MinkowskiUpcoming} we mentioned above.
\end{remark}

\begin{remark}\label{remark:tau}
Some functions are defined in variables $(p,\tau)$ instead of $(p,\sigma)$. If we treat $\Delta$ as a function of $(p,\tau)$ instead of $(p,\sigma)$, then the equation $A^p+B^p=1$ uniquely defines $\sigma:=\sigma(p,\tau)$ on the relevant branch. Treating $\tau$ as an independent variable is useful in the neighbourhood of $p=1$ and also near the endpoint $\tau=0$ in the neighbourhood of $p=2$. This was also noted (and used) in \cite{maly}. In Table \ref{table:4}, the only additional device is the auxiliary variable $x=\tau^{p-2}$.
\end{remark}


The cases (1)--(4) are covered by Table \ref{table:1}. We covered all values of $p$ from the range $1.01 \leq p \leq 1.99$, $2.03407 \leq p \leq 6$ using three programs: minLeft.jl, minRight.jl, and BellShape.jl.

Below, in Appendix \ref{appendinx:2}, we provide pseudocode for all three programs. 

For the neighbourhoods of $p=1$ and $p=2$ we had to come up with 12 new programs. The main distinctive feature of them, compared to the three programs defined above, is that each of them is designed to verify only one inequality (except for the program covering the right neighbourhood of $p=2)$, instead of automatically cycling through different phases corresponding to different inequalities being verified. The choice of parameters and indication of what exact inequality was checked can be found in Table \ref{table:2}, Table \ref{table:3} and Table \ref{table:4}. We note once again (see Remark \ref{remark:tau}) that in Tables \ref{table:3} and \ref{table:4}, all involved functions are regarded as functions of variables $(p,\tau)$, not $(p,\sigma)$.

\begin{table}[H]
    \centering
    \caption{Choice of parameters and runtime for cases (1)--(4), in $(p,\sigma)$}
    \label{table:1}
    \begin{tabular}{cccccc}
        \toprule
        $p_{\text{start}}$ & $p_{\text{end}}$ & Program & $p_{\text{step}}$ & $\sigma_{\text{step}}$ & time (sec) \\
        \midrule
         1.01 & 1.04 & minLeft.jl & 0.00008 & 0.00005 & 173 \\
        1.04 & 1.88 & minLeft.jl & 0.001 & 0.001 & 76 \\
        1.88 & 1.92399 & minLeft.jl & 0.001 & 0.0001 & 54 \\
        1.92399 & 1.9647 & minLeft.jl & 0.0004 & 0.0001 & 105 \\
        1.9647 & 1.98 & minLeft.jl & 0.0001 & 0.00005 & 231 \\
        1.98 & 1.99 & minLeft.jl & 0.00005 & 0.00001 & 1553 \\
        2.03407 & 2.03767 & minRight.jl & 0.0001 & 0.00001 & 282 \\
        2.03767 & 2.07481 & minRight.jl & 0.0003 & 0.00002 &  497 \\
        2.07481 & 2.48439 & minRight.jl & 0.0008 & 0.00008 & 715 \\
        2.48439 & 2.56 & minRight.jl & 0.0002 & 0.00005 & 616 \\
        2.56 & 2.56809 & minRight.jl & 0.00009 & 0.00002 & 362 \\
        2.56809 & 2.58 & BellShape.jl & 0.00009 & 0.00007 & 157 \\
        2.58 & 2.642 & BellShape.jl & 0.0001 & 0.00007 & 720 \\
        2.642 & 3.06 & minLeft.jl & 0.001 & 0.0001 & 365 \\
        3.06 & 6.0 & minLeft.jl & 0.001 & 0.001 & 260 \\
        \bottomrule
    \end{tabular}
\end{table}

\begin{table}[h]
    \centering
    \caption{Choice of parameters and runtime near $p=2$, in $(p,\sigma)$}
    \label{table:2}
    \begin{tabular}{ccccccccc}
        \toprule
        \text{Fact checked} & $p_{\text{start}}$ & $p_{\text{end}}$ & $\sigma_{\text{start}}$ & $\sigma_{\text{end}}$ & Program & $p_{\text{step}}$ & $\sigma_{\text{step}}$ & time (sec) \\
        \midrule
        $h<-10^{-9}$ & 1.99 & 1.99946 & 1 & 1.0016 & ok2h.jl & 0.00001 & 0.000008  & 23 \\ 
        $h<-10^{-9}$ & 1.99946 & 1.99954 & 1 & 1.0016 & ok2h.jl & 0.000001 & 0.000001  & 17 \\ 
        $h'_p>10^{-9}$ & 1.99954 & 2 & 1 & 1.0016 & ok2hpLEFT.jl & 0.00001 & 0.00001  & 2 \\ 
        $l^{(1)}>10^{-9}$ & 1.99 & 1.99916 & 1.72 & $\sigma_p$ & ok2l1.jl & 0.00001 & 0.000001 & 865 \\
        $l^{(1)}>10^{-9}$ & 1.99916 & 1.9995 & 1.72 & $\sigma_p$ & ok2l1.jl & 0.000006 & 0.0000005 & 123 \\
        $l^{(1)}>10^{-9}$ & 1.9995 & 1.99954 & 1.72 & $\sigma_p$ & ok2l1.jl & 0.000004 & 0.0000002 & 56 \\
        $(l^{(1)})'_p<-10^{-9}$ & 1.99954 & 2 & 1.72 & $\sigma_p$ & ok2l1p.jl & 0.00001 & 0.000001 & 53 \\        
        $g'_p>10^{-9}$ & 1.99 & 1.9965 & 1.0016 & 1.02 & ok2gp.jl & 0.00001 & 0.00001  & 141 \\
        $g'_p>10^{-9}$ & 1.9965 & 2.0076 & 1.0016 & 1.02 & ok2gp.jl & 0.0001 & 0.00001 & 25 \\
        $g'_p>10^{-9}$ & 1.99 & 2.0076 & 1.02 & 1.72 & ok2gp.jl & 0.0001 & 0.0001  & 139 \\ 
        $g'_p>10^{-9}$ & 2.0076 & 2.05 & 1.0016 & 1.0025 & ok2gp.jl & 0.00001 & 0.00001 & 47 \\
        $g'_p>10^{-9}$ & 2.0076 & 2.05 & 1.0025 & 1.72 & ok2gp.jl & 0.0001 & 0.0001 & 341 \\
        $(l^{(0)})'_p>10^{-9}$ & 2 & 2.0002 & 1 & 1.0016 & ok2l0p.jl & 0.00001 & 0.0001  & 1 \\ 
         $l^{(0)}>10^{-9}$ & 2.0002 & 2.00033 & 1 & 1.0016 & ok2l0.jl & 0.000001 & 0.00000005  & 473 \\ 
        $l^{(0)}>10^{-9}$ & 2.00033 & 2.00073 & 1 & 1.0016 & ok2l0.jl & 0.000004 & 0.0000001  & 177 \\ 
        $l^{(0)}>10^{-9}$ & 2.00073 & 2.00741 & 1 & 1.0016 & ok2l0.jl & 0.000008 & 0.0000008  & 188 \\ 
        $l^{(0)}>10^{-9}$ & 2.00741 & 2.03407 & 1 & 1.0016 & ok2l0.jl & 0.0001 & 0.00001  & 6 \\ 
        $h<-10^{-9}$ & 2.0002 & 2.0006 & 1.72 & $\sigma_p$ & ok2h2.jl & 0.00001 & 0.000001  & 46 \\ 
        $h<-10^{-9}$ & 2.0006 & 2.0052 & 1.72 & $\sigma_p$ & ok2h2.jl & 0.00001 & 0.00001  & 56 \\ 
        $h<-10^{-9}$ & 2.0052 & 2.03407 & 1.72 & $\sigma_p$ & ok2h2.jl & 0.0001 & 0.0001  & 6 \\         
        \bottomrule
    \end{tabular}
\end{table}

\begin{table}[h]
    \centering
    \caption{Choice of parameters and runtime near $p=1$, in $(p,\tau)$}
    \label{table:3}
    \begin{tabular}{cccccccc}
    \toprule
\multicolumn{7}{l}{
Common range:
$1\le p\le 1.01,\quad 1\le \sigma\le 1.01377243$.
}
\\
        \midrule
        \text{Fact checked} &  $\tau_{\text{start}}$ & $\tau_{\text{end}}$ & Program & $p_{\text{step}}$& $\sigma_{\text{step}}$ & $\tau_{\text{step}}$ & time (sec) \\
        \midrule
         $(l^{(1)})''_{pp}<-10^{-9}$ &  0.0 & 0.002 & ok1l1pp.jl & 0.0007 & 0.0002 & 0.0002 & 2 \\ 
        $g'_p<-10^{-9}$ &  0.002 & 0.2 & ok1gp.jl & - & - & 0.00001  & 2 \\ 
        $g''_{p\tau}>10^{-9}$ &  0.2 & 1/3 & ok1gptau.jl & 0.0005 & - & 0.0005 & 2 \\ 
        \bottomrule
    \end{tabular}
\end{table}

\begin{table}[h]
\centering
\caption{Choice of parameters and runtime near $p=2$, in $(p,\tau)$.}
\label{table:4}
\small
\begin{tabular}{lcccccc}
\toprule
\multicolumn{7}{l}{
Common range:
$2\le p\le 2.0002,\quad 1.72\le \sigma\le\sigma_p$. Here $p-$ and $\sigma-$ranges are not subdivided.
}
\\
\multicolumn{7}{l}{
Definitions:
$\delta=p-2,\quad x=\tau^\delta$; 
$[\tau^\delta]_{\rm box}$ denotes the interval enclosure on the current $\tau$-box. 
}
\\
\midrule
Fact checked
& $\tau_{\rm start}$ & $\tau_{\rm end}$ & $x$-enclosure & \# of $\tau$-boxes & Program & time (sec)
\\
\midrule

$h<0$ & $0$ & $10^{-12}$ & $0\le x\le 1/2$ & 1 & \multirow{4}{*}{\texttt{ok2gptau.jl}} & \multirow{4}{*}{4}
\\

$H_\delta<0$ & $0$ & $10^{-12}$ & $1/2\le x\le 1$ & 1 & &
\\

$H_\delta<0$ & $10^{-12}$ & $0.004$ & $x\in[\tau^\delta]_{\rm box}$ & 40 & &
\\

$F(p,1.72,0.004)>0,\ F_\sigma>0,\ F_\tau>0$ & $0$ & $0.004$ & -- & 1 & &
\\

\bottomrule
\end{tabular}
\end{table}


\clearpage
\section{Pseudocode for three programs}\label{appendinx:2}

\begin{algorithm}
\caption{Description of \texttt{minLeft.jl}}
\begin{algorithmic}[1]
    \State \textbf{Objective:} Verification of case (2), where $\Delta(p,\sigma)$ is minimized at $\sigma=1$.
    \State \textbf{Input:} Range $[p_{\text{start}}, p_{\text{end}}]$ and step sizes $p_{\text{step}}, \sigma_{\text{step}}$.
   \State \textbf{Discretization:} Construct a descending grid $\mathcal{P} = \{p_{\text{end}},p_{\text{end}}- p_{\text{step}}, p_{\text{end}}- 2p_{\text{step}} ,\dots,p_{\text{start}}\}$.
    \State \textbf{Process:}
    \For{each interval $[P_1,P_2],\ P_1<P_2$ defined by adjacent points in $\mathcal{P}$ }
        \State \textbf{Initialize:} Set verification phase $P \leftarrow 1$. \Comment{Forces remaining $\sigma$-intervals to start from {\bf Check 1}}
        
        \State \textbf{Define:} Construct a descending grid $\Sigma = \{\sigma_{P_2}, \sigma_{P_2}- \sigma_{\text{step}}, \dots, 1.0 \}$.
        \For{each interval $[\varsigma_1,\varsigma_2],\ \varsigma_1<\varsigma_2$ defined by adjacent points in $\Sigma$}
        
            \State 
            Find $\tau$-interval for given $p$-interval $[P_1,P_2]$ and $\sigma$-interval $[\varsigma_1,\varsigma_2]$.
            \State 
            \If{$P = 1$} 
                \State \textbf{Check 1:} Verify inequality $\Delta (p,\sigma) > \Delta(p,1)$ on a $(p,\sigma,\tau)$ box defined by two {\bf for} loops above.
                \If{Check 1 fails for some $(p,\sigma,\tau)$ box}
                    \State $P \leftarrow 2$ \Comment{Forces remaining $\sigma$-intervals to start from {\bf Check 2}}
                \EndIf
            \EndIf

            \State 
            \If{$P = 2$}
                \State \textbf{Check 2:} Verify $g(p,\sigma) < -10^{-9}$ on a $(p,\sigma,\tau)$ box defined by two {\bf for} loops above.
                \If{Check 2 fails for some $(p,\sigma,\tau)$ box}
                    \State $P \leftarrow 3$ \Comment{Forces remaining $\sigma$-intervals to start from {\bf Check 3}}
                \EndIf
            \EndIf

            \State 
            \If{$P = 3$}
                \State \textbf{Check 3:} Verify $h(p,\sigma) := g'_\sigma(p,\sigma) < -10^{-9}$ on a $(p,\sigma,\tau)$ box defined by two {\bf for} loops above.
                \If{Check 3 fails for some $(p,\sigma,\tau)$ box}
                    \State \textbf{Failure:} Record current $p$-interval as inconclusive.
                    \State \textbf{Break:} Exit $\sigma$-loop and proceed to next $p$-interval.
                \EndIf
            \EndIf
            
        \EndFor
    \EndFor
\end{algorithmic}
\end{algorithm}

\begin{algorithm}
\caption{Description of \texttt{minRight.jl}}
\begin{algorithmic}[1]
    \State \textbf{Objective:} Verification of case (2), where $\Delta(p,\sigma)$ is minimized at $\sigma=\sigma_p$.
    \State \textbf{Input:} Range $[p_{\text{start}}, p_{\text{end}}]$ and step sizes $p_{\text{step}}, \sigma_{\text{step}}$.
     \State \textbf{Discretization:} Construct an ascending grid $\mathcal{P} = \{p_{\text{start}}, p_{\text{start}} + p_{\text{step}}, \dots, p_{\text{end}}\}$.
    \State \textbf{Process:}
    \For{each interval $[P_1,P_2],\ P_1<P_2$ defined by adjacent points in $\mathcal{P}$ }
        \State \textbf{Initialize:} Set verification phase $P \leftarrow 1$. \Comment{Forces remaining $\sigma$-intervals to start from {\bf Check 1}}
        
        \State \textbf{Define:} Construct an ascending grid  $\Sigma = \{1.0, 1.0 + \sigma_{\text{step}}, \dots, \sigma_{\max}\}$.
        \For{each interval $[\varsigma_1,\varsigma_2],\ \varsigma_1<\varsigma_2$ defined by adjacent points in $\Sigma$}
            
            \State 
            Find $\tau$-interval for given $p$-interval $[P_1,P_2]$ and $\sigma$-interval $[\varsigma_1,\varsigma_2]$.
            \State 
            \If{$P = 1$} 
                \State \textbf{Check 1:} Verify inequality $\Delta (p,\sigma) > \Delta(p,\sigma_p)$ on a $(p,\sigma,\tau)$ box defined by two {\bf for} loops above.
                \If{Check 1 fails for some $(p,\sigma,\tau)$ box}
                    \State $P \leftarrow 2$ \Comment{Forces remaining $\sigma$-intervals to start from {\bf Check 2}}
                \EndIf
            \EndIf

            \State 
            \If{$P = 2$}
                \State \textbf{Check 2:} Verify $g(p,\sigma) > 10^{-9}$ on a $(p,\sigma,\tau)$ box defined by two {\bf for} loops above.
                \If{Check 2 fails for some $(p,\sigma,\tau)$ box}
                    \State $P \leftarrow 3$ \Comment{Forces remaining $\sigma$-intervals to start from {\bf Check 3}}
                \EndIf
            \EndIf

            \State 
            \If{$P = 3$}
                \State \textbf{Check 3:} Verify $h(p,\sigma) := g'_\sigma(p,\sigma) < -10^{-9}$ on a $(p,\sigma,\tau)$ box defined by two {\bf for} loops above.
                \If{Check 3 fails for some $(p,\sigma,\tau)$ box}
                    \State \textbf{Failure:} Record current $p$-interval as inconclusive.
                    \State \textbf{Break:} Exit $\sigma$-loop and proceed to next $p$-interval.
                \EndIf
            \EndIf
            
        \EndFor
    \EndFor
\end{algorithmic}
\end{algorithm}

\begin{algorithm}
\caption{Description of \texttt{BellShape.jl}}
\begin{algorithmic}[1]
    \State \textbf{Objective:} Verification of case (3), where the graph $\Delta(p,\sigma)$ has a "bell" shape (neighbourhood of $p=p_0$).
    \State \textbf{Input:} Range $[p_{\text{start}}, p_{\text{end}}]$ and step sizes $p_{\text{step}}, \sigma_{\text{step}}$.
    \State \textbf{Discretization:} Construct a descending grid $\mathcal{P} = \{p_{\text{end}},p_{\text{end}}- p_{\text{step}}, p_{\text{end}}- 2p_{\text{step}} ,\dots,p_{\text{start}}\}$.
    \State \textbf{Process:}
    \For{each interval $[P_1,P_2],\ P_1<P_2$ defined by adjacent points in $\mathcal{P}$ }
        \State \textbf{Initialize:} Set verification phase $P \leftarrow 1$. \Comment{Forces remaining $\sigma$-intervals to start from {\bf Check 1}}
        
        \State \textbf{Define:} Construct a descending grid $\Sigma = \{\sigma_{P_2}, \sigma_{P_2}- \sigma_{\text{step}}, \dots, 1.0 \}$.
        \For{each interval $[\varsigma_1,\varsigma_2],\ \varsigma_1<\varsigma_2$ defined by adjacent points in $\Sigma$}
         
            \State 
            Find $\tau$-interval for given $p$-interval $[P_1,P_2]$ and $\sigma$-interval $[\varsigma_1,\varsigma_2]$.
            \State 
            \If{$P = 1$} 
                \State \textbf{Check 1:} Verify $h(p,\sigma) := g'_\sigma(p,\sigma) < -10^{-9}$ on a $(p,\sigma,\tau)$ box defined by two {\bf for} loops above.
                \If{Check 1 fails for some $(p,\sigma,\tau)$ box}
                    \State $P \leftarrow 2$ \Comment{Forces remaining $\sigma$-intervals to start from {\bf Check 2}}
                \EndIf
            \EndIf

            \State 
            \If{$P = 2$}
                \State \textbf{Check 2:} Verify $g(p,\sigma) > 10^{-9}$ on a $(p,\sigma,\tau)$ box defined by two {\bf for} loops above.
                \If{Check 2 fails for some $(p,\sigma,\tau)$ box}
                    \State $P \leftarrow 3$ \Comment{Forces remaining $\sigma$-intervals to start from {\bf Check 3}}
                \EndIf
            \EndIf

            \State 
            \If{$P = 3$}
                \State \textbf{Check 3:} Verify $\Delta (p,\sigma) > \min\{ \Delta(p,1), \Delta(p,\sigma_p) \}$ on a $(p,\sigma,\tau)$ box defined by two {\bf for} loops above.
                \If{Check 3 fails for some $(p,\sigma,\tau)$ box}
                    \State $P \leftarrow 4$ \Comment{Forces remaining $\sigma$-intervals to start from {\bf Check 4}}
                \EndIf
            \EndIf

            \State 
            \If{$P = 4$}
                \State \textbf{Check 4:} Verify $g(p,\sigma) <- 10^{-9}$ on a $(p,\sigma,\tau)$ box defined by two {\bf for} loops above.
                \If{Check 4 fails for some $(p,\sigma,\tau)$ box}
                    \State $P \leftarrow 5$ \Comment{Forces remaining $\sigma$-intervals to start from {\bf Check 5}}
                \EndIf
            \EndIf

            \State 
            \If{$P = 5$}
                \State \textbf{Check 5:} Verify $h(p,\sigma) := g'_\sigma(p,\sigma) < -10^{-9}$ on a $(p,\sigma,\tau)$ box defined by two {\bf for} loops above.
                \If{Check 5 fails for some $(p,\sigma,\tau)$ box}
                    \State \textbf{Failure:} Record current $p$-interval as inconclusive.
                    \State \textbf{Break:} Exit $\sigma$-loop and proceed to next $p$-interval.
                \EndIf
            \EndIf
            
        \EndFor
    \EndFor
\end{algorithmic}
\end{algorithm}

\clearpage

\bibliographystyle{abbrv}
\bibliography{bibliog}

@article {AD,
    AUTHOR = {Andersen, Nickolas and Duke, William},
     TITLE = {On a theorem of {D}avenport and {S}chmidt},
   JOURNAL = {Acta Arith.},
  FJOURNAL = {Acta Arithmetica},
    VOLUME = {198},
      YEAR = {2021},
    NUMBER = {1},
     PAGES = {37--75},
      ISSN = {0065-1036,1730-6264},
   MRCLASS = {11J70 (11K60)},
  MRNUMBER = {4214349},
MRREVIEWER = {Agamemnon\ Zafeiropoulos},
       DOI = {10.4064/aa191202-30-7},
       URL = {https://doi.org/10.4064/aa191202-30-7},
}

@book {Cas,
    AUTHOR = {Cassels, J. W. S.},
     TITLE = {An introduction to {D}iophantine approximation},
    SERIES = {Cambridge Tracts in Mathematics and Mathematical Physics},
    VOLUME = {No. 45},
 PUBLISHER = {Cambridge University Press, New York},
      YEAR = {1957},
     PAGES = {x+166},
   MRCLASS = {10.3X},
  MRNUMBER = {87708},
MRREVIEWER = {H.\ Davenport},
}

@book {CaB,
    AUTHOR = {Cassels, J. W. S.},
     TITLE = {An introduction to the geometry of numbers},
    SERIES = {Die Grundlehren der mathematischen Wissenschaften in
              Einzeldarstellungen mit besonderer Ber\"ucksichtigung der
              Anwendungsgebiete},
    VOLUME = {Bd. 99},
 PUBLISHER = {Springer-Verlag, Berlin-G\"ottingen-Heidelberg},
      YEAR = {1959},
     PAGES = {viii+344},
   MRCLASS = {10.25},
  MRNUMBER = {157947},
MRREVIEWER = {E.\ Hlawka},
}

@book {Cus,
    AUTHOR = {Cusick, Thomas W. and Flahive, Mary E.},
     TITLE = {The {M}arkoff and {L}agrange spectra},
    SERIES = {Mathematical Surveys and Monographs},
    VOLUME = {30},
 PUBLISHER = {American Mathematical Society, Providence, RI},
      YEAR = {1989},
     PAGES = {x+97},
      ISBN = {0-8218-1531-8},
   MRCLASS = {11J06},
  MRNUMBER = {1010419},
MRREVIEWER = {Richard\ T.\ Bumby},
       DOI = {10.1090/surv/030},
       URL = {https://doi.org/10.1090/surv/030},
}

@article {MR35314,
    AUTHOR = {Cohn, Harvey},
     TITLE = {Minkowski's conjectures on critical lattices in the metric
              {$(| \xi|^p+|\eta|^p)^{1/p}$}},
   JOURNAL = {Ann. of Math. (2)},
  FJOURNAL = {Annals of Mathematics. Second Series},
    VOLUME = {51},
      YEAR = {1950},
     PAGES = {734--738},
      ISSN = {0003-486X},
   MRCLASS = {10.0X},
  MRNUMBER = {35314},
MRREVIEWER = {K.\ Mahler},
       DOI = {10.2307/1969377},
       URL = {https://doi.org/10.2307/1969377},
}

@incollection {MR0272722,
    AUTHOR = {Davenport, H. and Schmidt, Wolfgang M.},
     TITLE = {Dirichlet's theorem on diophantine approximation},
 BOOKTITLE = {Symposia {M}athematica, {V}ol. {IV} ({INDAM}, {R}ome,
              1968/69)},
     PAGES = {113--132},
 PUBLISHER = {Academic Press, London-New York},
      YEAR = {1970},
   MRCLASS = {10.31},
  MRNUMBER = {272722},
MRREVIEWER = {W.\ J.\ LeVeque},
}

@article {D,
    AUTHOR = {Davis, C. S.},
     TITLE = {Note on a conjecture by {M}inkowski},
   JOURNAL = {J. London Math. Soc.},
  FJOURNAL = {The Journal of the London Mathematical Society},
    VOLUME = {23},
      YEAR = {1948},
     PAGES = {172--175},
      ISSN = {0024-6107,1469-7750},
   MRCLASS = {10.0X},
  MRNUMBER = {28879},
MRREVIEWER = {J.\ F.\ Koksma},
       DOI = {10.1112/jlms/s1-23.3.172},
       URL = {https://doi.org/10.1112/jlms/s1-23.3.172},
}

@article {G,
    AUTHOR = {Good, I. J.},
     TITLE = {The fractional dimensional theory of continued fractions},
   JOURNAL = {Proc. Cambridge Philos. Soc.},
  FJOURNAL = {Proceedings of the Cambridge Philosophical Society},
    VOLUME = {37},
      YEAR = {1941},
     PAGES = {199--228},
      ISSN = {0008-1981},
   MRCLASS = {27.2X},
  MRNUMBER = {4878},
MRREVIEWER = {P.\ Erd\H os},
       DOI = {10.1017/s030500410002171x},
       URL = {https://doi.org/10.1017/s030500410002171x},
}

@article {her,
    AUTHOR = {Hermite, C.},
     TITLE = {Sur l'introduction des variables continues dans la th\'eorie
              des nombres},
   JOURNAL = {J. Reine Angew. Math.},
  FJOURNAL = {Journal f\"ur die Reine und Angewandte Mathematik. [Crelle's
              Journal]},
    VOLUME = {41},
      YEAR = {1851},
     PAGES = {191--216},
      ISSN = {0075-4102,1435-5345},
   MRCLASS = {99-04},
  MRNUMBER = {1578717},
       DOI = {10.1515/crll.1851.41.191},
       URL = {https://doi.org/10.1515/crll.1851.41.191},
}

@book {I,
    AUTHOR = {Iosifescu, Marius and Kraaikamp, Cor},
     TITLE = {Metrical theory of continued fractions},
    SERIES = {Mathematics and its Applications},
    VOLUME = {547},
 PUBLISHER = {Kluwer Academic Publishers, Dordrecht},
      YEAR = {2002},
     PAGES = {xx+383},
      ISBN = {1-4020-0892-9},
   MRCLASS = {11K50 (11K55 28D05 37A45 47B38 60F17)},
  MRNUMBER = {1960327},
MRREVIEWER = {F.\ Schweiger},
       DOI = {10.1007/978-94-015-9940-5},
       URL = {https://doi.org/10.1007/978-94-015-9940-5},
}

@article {I1,
    AUTHOR = {Ivanov, V. A.},
     TITLE = {Rational approximations of real numbers},
   JOURNAL = {Mat. Zametki},
  FJOURNAL = {Akademiya Nauk SSSR. Matematicheskie Zametki},
    VOLUME = {23},
      YEAR = {1978},
    NUMBER = {1},
     PAGES = {3--26},
      ISSN = {0025-567X},
   MRCLASS = {10F05},
  MRNUMBER = {476655},
MRREVIEWER = {F.\ Schweiger},
}

@article {I2,
    AUTHOR = {Ivanov, V. A.},
     TITLE = {A theorem of {D}irichlet in the theory of {D}iophantine
              approximations},
   JOURNAL = {Mat. Zametki},
  FJOURNAL = {Akademiya Nauk SSSR. Matematicheskie Zametki},
    VOLUME = {24},
      YEAR = {1978},
    NUMBER = {4},
     PAGES = {459--474, 589},
      ISSN = {0025-567X},
   MRCLASS = {10F05 (10F20)},
  MRNUMBER = {513648},
MRREVIEWER = {John\ H.\ Loxton},
}

@article {I3,
    AUTHOR = {Ivanov, V. A.},
     TITLE = {On the ray origin in the {D}irichlet spectrum of a problem of
              the theory of {D}iophantine approximations},
      NOTE = {Studies in number theory, 6},
   JOURNAL = {Zap. Nauchn. Sem. Leningrad. Otdel. Mat. Inst. Steklov.
              (LOMI)},
  FJOURNAL = {Zapiski Nauchnykh Seminarov Leningradskogo Otdeleniya
              Matematicheskogo Instituta imeni V. A. Steklova Akademii Nauk
              SSSR (LOMI)},
    VOLUME = {93},
      YEAR = {1980},
     PAGES = {164--185, 227},
      ISSN = {0373-2703},
   MRCLASS = {10F05},
  MRNUMBER = {579783},
MRREVIEWER = {Mary\ E.\ Flahive},
}

@article{JA,
author = {Jarník, Vojtĕch},
journal = {Prace Matematyczno-Fizyczne},
language = {pol},
number = {1},
pages = {91-106},
title = {Zur metrischen Theorie der diophantischen Approximationen},
url = {http://eudml.org/doc/215473},
volume = {36},
year = {1928-1929},
}

@article {KR,
    AUTHOR = {Kleinbock, Dmitry and Rao, Anurag},
     TITLE = {A dichotomy phenomenon for bad minus normed {D}irichlet},
   JOURNAL = {Mathematika},
  FJOURNAL = {Mathematika. A Journal of Pure and Applied Mathematics},
    VOLUME = {69},
      YEAR = {2023},
    NUMBER = {4},
     PAGES = {1145--1164},
      ISSN = {0025-5793,2041-7942},
   MRCLASS = {11K60 (11J83 37A44)},
  MRNUMBER = {4627911},
MRREVIEWER = {Johannes\ Schleischitz},
}

@article {MR4410764,
    AUTHOR = {Kleinbock, Dmitry and Rao, Anurag},
     TITLE = {A zero-one law for uniform {D}iophantine approximation in
              {E}uclidean norm},
   JOURNAL = {Int. Math. Res. Not. IMRN},
  FJOURNAL = {International Mathematics Research Notices. IMRN},
      YEAR = {2022},
    NUMBER = {8},
     PAGES = {5617--5657},
      ISSN = {1073-7928,1687-0247},
   MRCLASS = {11J04 (11J13 37A17 37A50)},
  MRNUMBER = {4410764},
MRREVIEWER = {Faustin\ Adiceam},
       DOI = {10.1093/imrn/rnaa256},
       URL = {https://doi.org/10.1093/imrn/rnaa256},
}

@article {KKR,
    AUTHOR = {Kleinbock, Dmitry and Rao, Anurag and Sathiamurthy,
              Srinivasan},
     TITLE = {Critical loci of convex domains in the plane},
   JOURNAL = {Indag. Math. (N.S.)},
  FJOURNAL = {Koninklijke Nederlandse Akademie van Wetenschappen.
              Indagationes Mathematicae. New Series},
    VOLUME = {32},
      YEAR = {2021},
    NUMBER = {3},
     PAGES = {719--728},
      ISSN = {0019-3577,1872-6100},
   MRCLASS = {11H06 (52C05)},
  MRNUMBER = {4246135},
       DOI = {10.1016/j.indag.2021.03.003},
       URL = {https://doi.org/10.1016/j.indag.2021.03.003},
}

@article {mh,
    AUTHOR = {Mahler, K.},
     TITLE = {On irreducible convex domains},
   JOURNAL = {Nederl. Akad. Wetensch., Proc.},
  FJOURNAL = {Proceedings. Akadamie van Wetenschappen Amsterdam.
              North-Holland, Amsterdam},
    VOLUME = {50},
      YEAR = {1947},
     PAGES = {98--107 = Indagationes Math. {\bf 9 73--82}},
      ISSN = {0370-0348},
   MRCLASS = {10.0X},
  MRNUMBER = {19661},
MRREVIEWER = {D.\ Derry},
}

@article {mcm,
    AUTHOR = {McMullen, Curtis T.},
     TITLE = {Winning sets, quasiconformal maps and {D}iophantine
              approximation},
   JOURNAL = {Geom. Funct. Anal.},
  FJOURNAL = {Geometric and Functional Analysis},
    VOLUME = {20},
      YEAR = {2010},
    NUMBER = {3},
     PAGES = {726--740},
      ISSN = {1016-443X,1420-8970},
   MRCLASS = {30C65 (11J04 30F45 37F30 91A46)},
  MRNUMBER = {2720230},
MRREVIEWER = {Peter\ Ha\"issinsky},
       DOI = {10.1007/s00039-010-0078-3},
       URL = {https://doi.org/10.1007/s00039-010-0078-3},
}

@article {Mi,
    AUTHOR = {Minkowski, Hermann},
     TITLE = {G\'en\'eralisation de la th\'eorie des fractions continues},
   JOURNAL = {Ann. Sci. \'Ecole Norm. Sup. (3)},
  FJOURNAL = {Annales Scientifiques de l'\'Ecole Normale Sup\'erieure.
              Troisi\`eme S\'erie},
    VOLUME = {13},
      YEAR = {1896},
     PAGES = {41--60},
      ISSN = {0012-9593},
   MRCLASS = {99-04},
  MRNUMBER = {1508923},
       URL = {http://www.numdam.org/item?id=ASENS_1896_3_13__41_0},
}

@book {Mi1,
    AUTHOR = {Minkowski, Hermann},
     TITLE = {Geometrie der {Z}ahlen},
    SERIES = {Bibliotheca Mathematica Teubneriana},
    VOLUME = {Band 40},
 PUBLISHER = {Johnson Reprint Corp., New York-London},
      YEAR = {1968},
     PAGES = {vii+256},
   MRCLASS = {01.60 (10.00)},
  MRNUMBER = {249269},
}

@incollection {momi,
    AUTHOR = {Moshchevitin, Nikolay},
     TITLE = {On {M}inkowski diagonal continued fraction},
 BOOKTITLE = {Analytic and probabilistic methods in number theory},
     PAGES = {197--206},
 PUBLISHER = {TEV, Vilnius},
      YEAR = {2012},
      ISBN = {978-609-433-178-7},
   MRCLASS = {11K50},
  MRNUMBER = {3025470},
MRREVIEWER = {Thomas\ A.\ Schmidt},
}

@book {RockettSus,
    AUTHOR = {Rockett, Andrew M. and Sz\"usz, Peter},
     TITLE = {Continued fractions},
 PUBLISHER = {World Scientific Publishing Co., Inc., River Edge, NJ},
      YEAR = {1992},
     PAGES = {x+188},
      ISBN = {981-02-1047-7},
   MRCLASS = {11J70 (11A55 11K50)},
  MRNUMBER = {1188878},
MRREVIEWER = {Mary\ E.\ Flahive},
       DOI = {10.1142/1725},
       URL = {https://doi.org/10.1142/1725},
}

@book {Sch,
    AUTHOR = {Schmidt, Wolfgang M.},
     TITLE = {Diophantine approximation},
    SERIES = {Lecture Notes in Mathematics},
    VOLUME = {785},
 PUBLISHER = {Springer, Berlin},
      YEAR = {1980},
     PAGES = {x+299},
      ISBN = {3-540-09762-7},
   MRCLASS = {10Fxx (10-02)},
  MRNUMBER = {568710},
MRREVIEWER = {A.\ J.\ van der Poorten},
}

@article {w1,
    AUTHOR = {Watson, G. L.},
     TITLE = {Minkowski's conjectures on the critical lattices of the region
              {$| x|^p+|y|^p\le1$}. {I}},
   JOURNAL = {J. London Math. Soc.},
  FJOURNAL = {The Journal of the London Mathematical Society},
    VOLUME = {28},
      YEAR = {1953},
     PAGES = {305--309},
      ISSN = {0024-6107,1469-7750},
   MRCLASS = {10.0X},
  MRNUMBER = {55379},
MRREVIEWER = {N.\ G.\ de Bruijn},
       DOI = {10.1112/jlms/s1-28.3.305},
       URL = {https://doi.org/10.1112/jlms/s1-28.3.305},
}

@article {w2,
    AUTHOR = {Watson, G. L.},
     TITLE = {Minkowski's conjectures on the critical lattices of the region
              {$| x|^p+|y|^p\le1$}. {II}},
   JOURNAL = {J. London Math. Soc.},
  FJOURNAL = {The Journal of the London Mathematical Society},
    VOLUME = {28},
      YEAR = {1953},
     PAGES = {402--410},
      ISSN = {0024-6107,1469-7750},
   MRCLASS = {10.0X},
  MRNUMBER = {57293},
MRREVIEWER = {N.\ G.\ de Bruijn},
       DOI = {10.1112/jlms/s1-28.4.402},
       URL = {https://doi.org/10.1112/jlms/s1-28.4.402},
}

@article {maly,
    AUTHOR = {Glazunov, N. M. and Golovanov, A. S. and Malyshev, A. V.},
     TITLE = {Proof of the {M}inkowski conjecture on the critical
              determinant of the region {$|x|^p+|y|^p<1$}},
   JOURNAL = {Zap. Nauchn. Sem. Leningrad. Otdel. Mat. Inst. Steklov.
              (LOMI)},
  FJOURNAL = {Zapiski Nauchnykh Seminarov Leningradskogo Otdeleniya
              Matematicheskogo Instituta imeni V. A. Steklova Akademii Nauk
              SSSR (LOMI)},
    VOLUME = {151},
      YEAR = {1986},
     PAGES = {40--53, 195},
      ISSN = {0373-2703},
   MRCLASS = {11H06},
  MRNUMBER = {849312},
MRREVIEWER = {Horst\ Brunotte},
       DOI = {10.1007/BF01727646},
       URL = {https://doi.org/10.1007/BF01727646},
}

@incollection {MR1928528,
    AUTHOR = {Kleinbock, Dmitry and Shah, Nimish and Starkov, Alexander},
     TITLE = {Dynamics of subgroup actions on homogeneous spaces of {L}ie
              groups and applications to number theory},
 BOOKTITLE = {Handbook of dynamical systems, {V}ol. 1{A}},
     PAGES = {813--930},
 PUBLISHER = {North-Holland, Amsterdam},
      YEAR = {2002},
      ISBN = {0-444-82669-6},
   MRCLASS = {22F30 (11E57 22E30 37A45 37C85)},
  MRNUMBER = {1928528},
MRREVIEWER = {Mikhail\ S.\ Kulikov},
       DOI = {10.1016/S1874-575X(02)80013-3},
       URL = {https://doi.org/10.1016/S1874-575X(02)80013-3},
}

@book {Khinchin_book,
    AUTHOR = {Khintchine, A. Ya.},
     TITLE = {Continued fractions},
      NOTE = {Translated by Peter Wynn},
 PUBLISHER = {P. Noordhoff, Ltd., Groningen},
      YEAR = {1963},
     PAGES = {iii+101},
   MRCLASS = {10.31},
  MRNUMBER = {0161834},
MRREVIEWER = {E. Frank},
}

@article {WaWu08,
    AUTHOR = {Wang, Bao-Wei and Wu, Jun},
     TITLE = {Hausdorff dimension of certain sets arising in continued
              fraction expansions},
   JOURNAL = {Adv. Math.},
  FJOURNAL = {Advances in Mathematics},
    VOLUME = {218},
      YEAR = {2008},
    NUMBER = {5},
     PAGES = {1319--1339},
      ISSN = {0001-8708},
     CODEN = {ADMTA4},
   MRCLASS = {11K50 (11K55 28A78 28A80)},
  MRNUMBER = {2419924},
MRREVIEWER = {Simon Kristensen},
       DOI = {10.1016/j.aim.2008.03.006},
       URL = {http://dx.doi.org/10.1016/j.aim.2008.03.006},
}

@article {HuWuXu,
    AUTHOR = {Huang, Lingling and Wu, Jun and Xu, Jian},
     TITLE = {Metric properties of the product of consecutive partial
              quotients in continued fractions},
   JOURNAL = {Israel J. Math.},
  FJOURNAL = {Israel Journal of Mathematics},
    VOLUME = {238},
      YEAR = {2020},
    NUMBER = {2},
     PAGES = {901--943},
      ISSN = {0021-2172},
   MRCLASS = {Prelim},
  MRNUMBER = {4145821},
       DOI = {10.1007/s11856-020-2049-1},
       URL = {https://doi.org/10.1007/s11856-020-2049-1},
}

@article {HLS,
    AUTHOR = {Hussain, Mumtaz and Li, Bixuan and Shulga, Nikita},
     TITLE = {Hausdorff dimension analysis of sets with the product of
              consecutive vs single partial quotients in continued
              fractions},
   JOURNAL = {Discrete Contin. Dyn. Syst.},
  FJOURNAL = {Discrete and Continuous Dynamical Systems. Series A},
    VOLUME = {44},
      YEAR = {2024},
    NUMBER = {1},
     PAGES = {154--181},
      ISSN = {1078-0947},
   MRCLASS = {11K50 (11J70 11J83 11K55 28A78)},
  MRNUMBER = {4671519},
       DOI = {10.3934/dcds.2023099},
       URL = {https://doi.org/10.3934/dcds.2023099},
}

@article{HussainShulgaFiniteProgressions,
      title={Metrical properties of finite product of partial quotients in arithmetic progressions}, 
      author={Mumtaz Hussain and Nikita Shulga},
      year={to appear in Ann. Sc. Norm. Super. Pisa Cl. Sci.},
      journal={arXiv:2309.00826},
      archivePrefix={arXiv},
      primaryClass={math.DS}
}

@article {MR3815461,
    AUTHOR = {Moreira, Carlos Gustavo},
     TITLE = {Geometric properties of the {M}arkov and {L}agrange spectra},
   JOURNAL = {Ann. of Math. (2)},
  FJOURNAL = {Annals of Mathematics. Second Series},
    VOLUME = {188},
      YEAR = {2018},
    NUMBER = {1},
     PAGES = {145--170},
      ISSN = {0003-486X,1939-8980},
   MRCLASS = {11J06 (11J70 28A78 37D20)},
  MRNUMBER = {3815461},
MRREVIEWER = {Thomas\ W.\ Cusick},
       DOI = {10.4007/annals.2018.188.1.3},
       URL = {https://doi.org/10.4007/annals.2018.188.1.3},
}

@article {MR4395950,
    AUTHOR = {Beresnevich, Victor and Guan, Lifan and Marnat, Antoine and
              Ram\'irez, Felipe and Velani, Sanju},
     TITLE = {Dirichlet is not just bad and singular},
   JOURNAL = {Adv. Math.},
  FJOURNAL = {Advances in Mathematics},
    VOLUME = {401},
      YEAR = {2022},
     PAGES = {Paper No. 108316, 57},
      ISSN = {0001-8708,1090-2082},
   MRCLASS = {11J13 (11H06)},
  MRNUMBER = {4395950},
MRREVIEWER = {Tushar\ Das},
       DOI = {10.1016/j.aim.2022.108316},
       URL = {https://doi.org/10.1016/j.aim.2022.108316},
}

@article {MR4449836,
    AUTHOR = {Akhunzhanov, Renat K. and Moshchevitin, Nikolay G.},
     TITLE = {A note on {D}irichlet spectrum},
   JOURNAL = {Mathematika},
  FJOURNAL = {Mathematika. A Journal of Pure and Applied Mathematics},
    VOLUME = {68},
      YEAR = {2022},
    NUMBER = {3},
     PAGES = {896--920},
      ISSN = {0025-5793,2041-7942},
   MRCLASS = {11J13},
  MRNUMBER = {4449836},
MRREVIEWER = {Mumtaz\ Hussain},
       DOI = {10.1112/mtk.12151},
       URL = {https://doi.org/10.1112/mtk.12151},
}

@article {MR3284117,
    AUTHOR = {Akhunzhanov, Renat K. and Shatskov, Denis O.},
     TITLE = {On {D}irichlet spectrum for two-dimensional simultaneous
              {D}iophantine approximation},
   JOURNAL = {Mosc. J. Comb. Number Theory},
  FJOURNAL = {Moscow Journal of Combinatorics and Number Theory},
    VOLUME = {3},
      YEAR = {2013},
    NUMBER = {3-4},
     PAGES = {5--23},
      ISSN = {2220-5438,2640-7361},
   MRCLASS = {11J13 (11J06)},
  MRNUMBER = {3284117},
MRREVIEWER = {Dzmitry\ Badziahin},
}

@article{Malyshev1975,
author = {Malyshev, A. and Voronetsky, A.},
journal = {Acta Arithmetica},
language = {eng},
number = {1},
pages = {447-458},
title = {The proof of {M}inkowski’s conjecture concerning the critical determinant of the region $|x|^p + |y|^p < 1$ for $p \geq 6$},
url = {http://eudml.org/doc/205356},
volume = {27},
year = {1975},
}

@book {MR733988,
    AUTHOR = {Alefeld, G\"otz and Herzberger, J\"urgen},
     TITLE = {Introduction to interval computations},
    SERIES = {Computer Science and Applied Mathematics},
      NOTE = {Translated from the German by Jon Rokne},
 PUBLISHER = {Academic Press, Inc. [Harcourt Brace Jovanovich, Publishers],
              New York},
      YEAR = {1983},
     PAGES = {xviii+333},
      ISBN = {0-12-049820-0},
   MRCLASS = {65-01 (65G10)},
  MRNUMBER = {733988},
MRREVIEWER = {H.\ Ratschek},
}

@article{website:Wiki,
    author       = {{Inverval Arithmetic, Wikipedia page}},
    journal          = {https://en.wikipedia.org/wiki/Interval\_arithmetic},
    howpublished = {In \textit{Wikipedia}},
}

@article{website:Julia,
    author       = {{Julia Programming language, official website}},
    journal          = {https://julialang.org/},
}

@software{IntervalArithmetic.jl,
  author = {Luis Benet, Olivier Hénot, Benoît Richard and David P. Sanders},
  title  = {IntervalArithmetic.jl},
  url    = {https://github.com/JuliaIntervals/IntervalArithmetic.jl},
  year   = {2025},
  doi    = {10.5281/zenodo.17163816}
}

@misc{github,
  author = {Shulga, N.},
  title = {Minkowski Conjecture on Critical Lattices},
  year = {2026},
  publisher = {GitHub},
  journal = {GitHub repository},
  howpublished = {\url{https://github.com/Nikita-Shulga/Minkowski-Conjecture-on-Critical-Lattices}},
}

@article{MinkowskiUpcoming,
  author = {Shulga, N.},
  title = {Minkowski Conjecture on Critical Lattices},
  year = {2026},
  journal = {In preparation},
}

@article{maly1985,
author = {Glazunov, N.M. and Malyshev, A.V.},
journal = {Cybern Syst Anal},
language = {eng},
pages = {571–578},
title = {{M}inkowsky's conjecture on the critical determinant.},
url = {https://doi.org/10.1007/BF01074709},
volume = {21},
year = {1985},
note = {Translated from Kibernetika, No. 5, pp. 10–14, September–October, 1985.},
}

@article{szekeres,
    author = {Szekeres, G.},
    title = {On a Problem of the Lattice-Plane},
    journal = {Journal of the London Mathematical Society},
    volume = {s1-12},
    number = {2},
    pages = {88-93},
    year = {1937},
    month = {04},
    issn = {0024-6107},
    doi = {10.1112/jlms/s1-12.1.88},
    url = {https://doi.org/10.1112/jlms/s1-12.1.88},
    eprint = {https://academic.oup.com/jlms/article-pdf/s1-12/2/88/6471563/s1-12-1-88.pdf},
}

@article {MR794799,
    AUTHOR = {Dani, S. G.},
     TITLE = {Divergent trajectories of flows on homogeneous spaces and
              {D}iophantine approximation},
   JOURNAL = {J. Reine Angew. Math.},
  FJOURNAL = {Journal f\"ur die Reine und Angewandte Mathematik. [Crelle's
              Journal]},
    VOLUME = {359},
      YEAR = {1985},
     PAGES = {55--89},
      ISSN = {0075-4102,1435-5345},
   MRCLASS = {58F25 (22E40 58F11)},
  MRNUMBER = {794799},
MRREVIEWER = {A.\ Morimoto},
       DOI = {10.1515/crll.1985.359.55},
       URL = {https://doi.org/10.1515/crll.1985.359.55},
}

  \end{document}